\documentclass[12pt]{amsart}
\usepackage{times}
\usepackage[T1]{fontenc}
\usepackage{dsfont}
\usepackage{mathrsfs}
\usepackage[colorlinks]{hyperref}
\usepackage{xcolor}
\usepackage[a4paper,asymmetric]{geometry}
\usepackage{mathscinet}
\usepackage{fullpage}
\usepackage{latexsym}
\usepackage{amsthm}
\usepackage{amssymb}
\usepackage{amsfonts}
\usepackage{amsmath}
\usepackage{graphicx}
\newtheorem{theorem}{Theorem}[section]
\newtheorem{thm}[theorem]{Theorem}

\newtheorem{lem}[theorem]{Lemma}
\newtheorem{proposition}[theorem]{Proposition}

\newtheorem{corollary}[theorem]{Corollary}

\theoremstyle{definition}

\newtheorem{defn}[theorem]{Definition}

\theoremstyle{remark}

\newtheorem{rem}[theorem]{Remark}
\numberwithin{equation}{section}

 \DeclareMathAlphabet{\mathpzc}{OT1}{pzc}{m}{it}

  \newcommand{\dif}{\mathrm{d}}

 \newcommand{\E}{\mathbb{E}}            
 
 \newcommand{\e}{\varepsilon}
 
 \newcommand{\p}{\partial}

 \newcommand{\N}{\mathbb{N}}
 \newcommand{\R}{\mathbb{R}}
 
 \newcommand{\PP}{\mathbb{P}}
 \newcommand{\mcl}{\mathcal}
 
 \newcommand{\Be}{\begin{equation}}
 \newcommand{\Ee}{\end{equation}}
 \newcommand{\Bs}{\begin{split}}
 \newcommand{\Es}{\end{split}}
  \newcommand{\Bes}{\begin{equation*}}
 \newcommand{\Ees}{\end{equation*}}
 \newcommand{\BT}{\begin{thm}}
 \newcommand{\ET}{\end{thm}}
 \newcommand{\Bp}{\begin{proof}}
 \newcommand{\Ep}{\end{proof}}
 \newcommand{\BL}{\begin{lem}}
 \newcommand{\EL}{\end{lem}}
 \newcommand{\BP}{\begin{proposition}}
 \newcommand{\EP}{\end{proposition}}
 \newcommand{\BC}{\begin{corollary}}
 \newcommand{\EC}{\end{corollary}}
 \newcommand{\BR}{\begin{rem}}
 \newcommand{\ER}{\end{rem}}
 \newcommand{\BD}{\begin{defn}}
 \newcommand{\ED}{\end{defn}}
 \newcommand{\BI}{\begin{itemize}}
 \newcommand{\EI}{\end{itemize}}
 
 \newcommand{\tl}{\tilde}

\begin{document}
\title[Approximation of stable law by Stein's method]
{Approximation of stable law in Wasserstein-1 distance by Stein's method}

\author[L. Xu]{Lihu Xu}
\address{1. Department of Mathematics,
Faculty of Science and Technology
University of Macau
Av. Padre Tom\'{a}s Pereira, Taipa
Macau, China; \ \ 2. UM Zhuhai Research Institute, Zhuhai, China.}
\email{lihuxu@umac.mo}


\begin{abstract} \label{abstract}
Let $n \in \N$, let $\zeta_{n,1},...,\zeta_{n,n}$ be a sequence of independent random variables with $\E \zeta_{n,i}=0$ and $\E |\zeta_{n,i}|<\infty$ for each $i$, and let $\mu$ be an $\alpha$-stable distribution having characteristic function $e^{-|\lambda|^{\alpha}}$ with $\alpha\in (1,2)$. Denote $S_{n}=\zeta_{n,1}+...+\zeta_{n,n}$ and its distribution by $\mcl L(S_n)$, we bound the Wasserstein-1 distance of $\mcl L(S_{n})$ and $\mu$ essentially by an $L^{1}$ discrepancy between two kernels. More precisely, we prove the following inequality:
\ \ \
\Bes
\begin{split}
d_W\left(\mcl L (S_n), \mu\right) \ \le C  \left[\sum_{i=1}^n\int_{-N}^N \left|\frac{\mcl K_\alpha(t,N)}n -\frac{ K_i(t,N)}{\alpha}\right| \dif t \ +\ \mcl R_{N,n}\right],
\end{split}
\Ees
where $d_W$ is the Wasserstein-1 distance of probability measures, $\mcl K_\alpha(t,N)$ is the kernel of a decomposition of the fractional Laplacian $\Delta^{\frac \alpha2}$, $ K_i(t,N)$ is a $K$ function \cite{ChGoSh11} with a truncation, and $\mcl R_{N,n}$ is a small remainder. The integral term
\ \ \
$$\sum_{i=1}^n\int_{-N}^N \left|\frac{\mcl K_\alpha(t,N)}n -\frac{ K_i(t,N)}{\alpha}\right| \dif t$$
can be interpreted as an $L^{1}$ discrepancy.

As an application, we prove a general theorem of stable law convergence rate when $\zeta_{n,i}$ are i.i.d. and the distribution falls in the normal domain of attraction
of $\mu$. To test our results, we compare our convergence rates with those known in the literature for four given examples, among which the distribution in the fourth example is not in the normal domain of attraction of $\mu$.
\\ \\
{\bf Key words:} stable approximation, Wasserstein-1 distance ($W_{1}$ distance), Stein's method, $L^1$ discrepancy, normal domain of  attraction of stable law, $\alpha$-stable processes
\end{abstract}

\maketitle
\section{Introduction}
Let $n \in \N$ and let $\zeta_{n,1},...,\zeta_{n,n}$ be a sequence of independent random variables with $\E \zeta_{n,i}=0$ for each $i$,
denote
$$S_n\ =\ \zeta_{n,1}+...+\zeta_{n,n}.$$
It is well known that $S_n$ weakly converges to the standard normal distribution $\Phi$ if this sequence satisfies
the Lindeberg condition and $\E S^2_n \rightarrow 1$. If we further assume that $\E|\zeta_{n,i}|^3<\infty$ for each $i$, then Berry-Esseen theorem follows
\Bes
\sup_{x \in \R} \left|\PP\left(S_n \le x\right)-\Phi(x)\right| \ \le \ C \sum_{i=1}^n \E|\zeta_{n,i}|^3,
\Ees
where $C>0$ is some constant.

Stein's method was put forward in the seminal work \cite{Stein72} to study normal approximations such as Berry-Esseen theorem, very soon thereafter Chen applied this method to get the convergence rate of the Poisson approximation \cite{Chen75}. Nowadays, Stein's method has been extended and refined by many authors and become a very important tool for getting bounds of measure approximations, see \cite{BaCeXi07, BrDa17,ChaSha11, Do14, EiLo10, GoTi06, KuTu11, PeRoRo12, Fang14,GaPiRe17,GoRe97,Hsu05,NoPe09,NoPe12, NoPeSw14,ReRo09}. For more references, we refer the reader to the webpages: https://sites.google.com/site/steinsmethod/home and https://sites.google.com/site/malliavinstein/home.

The stable distribution is one of the most important distributions in probability theory and has a lot of applications in economics, finance, physics and so on, see the monographs \cite{HauLus15, UcZo99} and the references therein for details. If the above sequence $\{\zeta_{n,i}\}_{1 \le i \le n}$ are assumed to have a suitable heavy tail, $S_n$ weakly converges to a stable distribution \cite[Theorem 3.7.2]{Dur10}. However, it seems that there are not many
results about the rate of stable law convergence, see \cite{BaSh60, BoSh70, BuHa78, ChWo92, DaNa02, Hall81,KuKe00,JuPa98}. Moreover, all these works are proved by the characteristics function method in Kolmogorov distance.
\vskip 3mm

 The goal of this paper is to study the $\alpha$-stable law approximation in Wassertein-1 distance (it is often called $W_1$ distance or $L^1$ distance for simplicity) by Stein's method for $\alpha \in (1,2)$.  We prove two general theorems, one is a framework which gives a general bound for the $W_1$ distance between $S_{n}$ and $\mu$, the other is an application of the framework when $\{\zeta_{n,i}\}_{1 \le i \le n}$ are i.i.d and their distribution falls in the normal domain of attraction of $\mu$. It  should be stressed that some known results can give the rate for $\alpha \in (0,1]$, while ours is only for $\alpha \in (1,2)$. The reason is stable distributions do not have 1st moment for $\alpha \in (0,1]$, and the $W_{1}$ distance is consequently NOT well defined in general. Therefore, our assumption $\alpha \in (1,2)$ is \emph{essential}.

 We apply the two theorems to four examples which have been studied by several authors \cite{KuKe00,JuPa98,DaNa02,ChZh16,Hall81} in Kolmogorov distance, and compare our convergence rates with theirs. A big advantage of our theorems is that one can obtain an explicit bound of convergence rather than only giving the order of rates as in the known literatures.
\vskip 3mm

Our first example is a sequence of i.i.d. random variables having a Pareto distribution density $p(x)=\frac{\alpha}{2|x|^{\alpha+1}} 1_{\{|x|>1\}}$, whose sum scaled by $n^{-1/\alpha}$ weakly converges to a symmetric stable distribution. A convergence rate $n^{-\frac{2-\alpha}{\alpha}}$ in Kolmogorov distance was obtained in \cite{KuKe00}, while \cite{DaNa02} proved a rate $n^{-\frac{2-\alpha}{d+\alpha}}$ for $d$ dimensional stable law in total variation distance and conjectured that a better rate should be $n^{-\frac{2-\alpha}{\alpha}}$ in the $L^{1}$ or total variation distance. Our result gives a positive answer to their conjecture for the $L^{1}$ distance case when $d=1$.

The second example is from \cite{KuKe00,AMPS16}. The distribution of i.i.d. random variables in \cite{KuKe00} is a perturbed Pareto distribution with a density $p(x)=\left(\frac{A}{|x|^{\alpha+1}}+\frac{B}{|x|^{\beta+1}}\right)1_{\{|x| > a\}}$ for some $A>0$, $B>0$, $a>0$ and $\beta>\alpha$. We consider a more general distribution such that the distribution function  $F$ satisfies
$1-F(x)=\frac{A}{|x|^{\alpha}}+\frac{B_{1}(x)}{|x|^{\beta}}, \ F(-x)=\frac{A}{|x|^{\alpha}}+\frac{B_{2}(x)}{|x|^{\beta}}$ for large $x>0$, where $\beta>\alpha$, $A>0$, and $B_{1}(x), B_{2}(x)$ are bounded continuous functions. It seems that the technique in \cite{KuKe00} is not able to handle this general distribution case. In this paper, we obtain a convergence rate $n^{-\frac{2-\alpha}{\alpha}}$ for $\beta \in (2,\infty)$, while \cite{KuKe00} gives the same rate for $\beta \in (2 \alpha,\infty)$. Note that the example in \cite[Appendix B]{AMPS16} is covered by this one by taking $\beta=\alpha+1$.

 The third example is a special case of \cite{Hall81} by Hall. When the limit distribution is symmetric stable, we can get a rate $n^{-\frac{2-\alpha}{\alpha}}$ in some situations, while Hall obtained a rate $n^{-\beta}$ for some $0<\beta<\frac{2-\alpha}{\alpha}$.

The fourth example is from \cite{JuPa98}, the i.i.d. random variables therein have a density $p(x)=\frac{C (\log|x|)^\beta}{|x|^{\alpha+1}} 1_{\{|x|>c\}}$ with $\beta \in \R$ and $c, C>0$, which is not in the normal domain of attraction of a stable law. A convergence rate $(\log n)^{-1}$ in Kolmogorov distance was proved by a very delicate analysis depending on the special form of the distribution. Using our first general theorem, we can obtain a rate $(\log n)^{-1+\frac 1\alpha}$ in $W_1$ distance, which is worse than  $(\log n)^{-1}$. However, our theorem can be used to study more examples which can not be handled by the characteristics function method in \cite{JuPa98} directly. We defer to give the details of this example in the appendix.

 \vskip 3mm

 Let us now roughly explain the strategy of our method. In normal approximations, the $K$ function approach \cite{ChGoSh11} is to write
 \Be  \label{e:ESnFSn}
 \begin{split}
 \E[S_n f(S_n)]& \ =\ \sum_{i=1}^n \int_{-\infty}^\infty  \E[K_i(t) f'(S_n(i)+t)] \dif t, 
 \end{split}
 \Ee
 where $S_n(i)=S_n-\zeta_{n,i}$ and $K_i(t)=\E\left[\zeta_{n,i} 1_{\{0 \le t \le \zeta_{n,i}\}}-\zeta_{n,i} 1_{\{\zeta_{n,i}  \le t \le 0\}}\right]$, and bound its difference with $\E[f'(S_{n})]$.
\vskip 1mm

To prove the convergence rate of stable law, we shall find a solution $f$ of the Stein equation, \eqref{e:StEq} below, and bound
$$\E\left[\Delta^{\frac \alpha 2} f(S_n)-\frac 1\alpha S_n f'(S_n)\right],$$
where $\Delta^{\frac \alpha 2}$ is the fractional Laplacian defined by \eqref{e:FraLap} below. Inspired by the above observation of $\E[S_n f(S_n)]$, we represent
 \Be   \label{e:ESnFSn-1}
 \begin{split}
 \E[S_n f'(S_n)]& \ =\ \sum_{i=1}^n \int_{-N}^N  \E[ K_i(t,N) f''(S_n(i)+t)] \dif t+\mcl R,
 \end{split}
 \Ee
 where $N>0$ is an arbitrary number to be chosen later, $\mcl R$ is a remainder and
\Be  \label{e:Ker1-1}
 K_i(t,N)\ =\ \E\left[\zeta_{n,i} 1_{\{0 \le t \le \zeta_{n,i}  \le N\}}-\zeta_{n,i} 1_{\{-N \le  \zeta_{n,i}  \le t \le 0\}}\right].
\Ee
Due to the heavy tail property of $\zeta_{n,i}$, we need to truncate $\zeta_{n,i}$ and thus get a remainder $\mcl R$. On the other hand, we decompose $\Delta^{\frac \alpha 2} f$ into a linear combination of $f''$ with a remainder $\mcl R'$ as the following:
\ \ \ \
\Be  \label{e:DecFx}
\Delta^{\frac \alpha 2} f(x)\ =\ \int_{-N}^N \mcl K_{\alpha}(t,N) f''(x+t) \dif t + \mcl R',
\Ee
where
\Be \label{e:StaKer}
\mcl K_\alpha(t,N)=\frac{d_\alpha}{\alpha(\alpha-1)}\left(|t|^{1-\alpha}-N^{1-\alpha}\right) \ \ \ \ {\rm with} \ \ \ \  d_\alpha=\left(\int_{-\infty}^\infty \frac{1-\cos y}{|y|^{1+\alpha}} \dif y\right)^{-1}.
\Ee
Using \eqref{e:DecFx} and \eqref{e:ESnFSn-1}, we see
\Be  \label{e:StaDec}
\begin{split}
\E\left[\Delta^{\frac \alpha 2} f(S_n)-\frac 1\alpha S_n f'(S_n)\right] & \ =\ \sum_{i=1}^n \!\! \int_{-N}^N \!\E\left[\left(\frac{\mcl K_\alpha(t,N)} n-\frac{ K_i(t,N)}{\alpha}\right) f''(S_n(i)+t)\right] \dif t+\mcl R'', \\
\end{split}
\Ee
where $\mcl R''$ is another remainder. Hence,
\ \ \ \
\begin{equation*}
\left|\E\left[\Delta^{\frac \alpha 2} f(S_n)-\frac 1\alpha S_n f'(S_n)\right]\right| \ \le \ \left(\sum_{i=1}^n \int_{-N}^N \left|\frac{\mcl K_\alpha(t,N)}n-\frac{ K_i(t,N)}{\alpha}\right| \dif t\right) \|f''\|+|\mcl R''|,
\end{equation*}
where $\|f''\|=\sup_{x \in \R} |f''(x)|$.
Therefore, in order to obtain the convergence rate, it suffices to bound $\|f''\|$ and the remainder $\mcl R''$.
 \vskip 3mm

 A recent result about stable convergence by Arras et. al. \cite[Appendix B]{AMPS16} is as the following: for $\alpha \in (1,2)$,
\ \ \ \
$$d_{{\rm Kol}} (\mcl L(S_{n}), \mu) \le C n^{-\frac 12(1-\frac \alpha 2)}, \ \ \ \ \ \ \ \ \sup_{h \in \mcl H_{3}} \left|\E[h(S_{n})]-\int_{\R} h(x) \mu(\dif x)\right| \le C n^{\frac{2\alpha}{2\alpha+1}(\frac 12-\frac{1}{\alpha})},$$
where $\mcl L(S_{n})$ is the distribution of $S_{n}$, $d_{{\rm Kol}}$ denotes the Kolmogorov distance, $\mu$ is a stable distribution with characteristic function $e^{-|\lambda|^{\alpha}}$, and $\mcl H_{3}$ is the set of all bounded third order differentiable functions $h$ such that $\|h^{{(k)}}\| \le 1$ for $k=0,1,2,3$. Their approach is by Stein-Tikhomirov method.
Note that \cite[Appendix B]{AMPS16} is a special case of Example 2 below, in which we show by our general result that a rate $n^{-\frac{2-\alpha}{\alpha}}$ in $W_{1}$ distance can be achieved. By a standard argument, this $W_1$ rate implies a Kolmogorov rate $n^{-\frac{2-\alpha}{2\alpha}}$, which is better than $n^{-\frac 12(1-\frac \alpha 2)}$.

More recently, Arras and Houdr\'e found a nice characterization of infinitely divisible law with finite first moment \cite[Theorem 3.1]{ArHo17}, and proved a general upper bound for $d_{\rm Kol}(\mu_{n},\mu)$ by Fourier analysis as $\mu_{n}$ and $\mu$ are both infinitely divisible. This result was applied to study several examples such as compound Poisson random variables, Pareto type random variables sum and so on, in particular, if $\mu_n$ is the distribution of a sum of i.i.d. infinitely divisible Pareto type random variables, it converges to a stable distribution with a rate $n^{-\frac{2-\alpha}{\alpha}}$ in Kolmogorov distance. They also derived a nice formulation of the related generators for the self-decomposable distribution family \cite[Proposition 5.1]{ArHo17}, which generalized the result in our Lemma \ref{l:Main2} below.  Furthermore, using a methodology very similar to the one developed in our paper, the same authors proved a bound for self-decomposable distribution approximation in a smooth Wasserstein distance $d_{W_{2}}$ by Stein's method, see \cite[Section 6]{ArHo17}.   Applying \cite[Theorems 6.1, 6.2]{ArHo17} to stable approximations, from the discrepancy terms in the bounds therein, we can immediately see that the convergence rate is at most $n^{-\frac{2-\alpha}{\alpha}}$ in $d_{W_{2}}$ distance. Note that $d_{W_{2}}$ is smaller than $W_{1}$ distance \cite[(4.3)]{ArHo17}.
\vskip 2mm

\cite{JoSa05} also gives a convergence rate for stable approximations in the Mallows distance $d_{r}$ with some $r>0$, note that $d_{r}$ is the classical Wasserstein-$r$ distance when $r \ge 1$. Let $X_{1},..., X_{n}$ be i.i.d. random variables with mean $0$ and a distribution function $F_X$ such that $F_{X}(x)=\frac{c_1+b_{X}(x)}{|x|^{\alpha}}$ for $x<0$ and $1-F_{X}(x)=\frac{c_{2}+b_{X}(x)}{|x|^{\alpha}}$ for $x>0$, where $c_{1}, c_{2}>0$ and $b_{X}(x)=O(\frac 1{|x|^{\gamma}})$ with $\gamma>0$, \cite[Theorem 1.2]{JoSa05} claims that $S_n=n^{-\frac 1\alpha} \sum_{i=1}^{n} X_{i}$ converges to a stable distribution $\mu$ with a rate $n^{\frac 1\beta-\frac 1\alpha}$ in the distance $d_\beta$ for some $\beta \in (\alpha,2]$. When $\alpha \in (1,2)$, this rate is worse than the rate $n^{-\frac 2 \alpha+1}$ in our paper, but $d_{\beta}$ is larger than $W_{1}$ distance. Moreover, when $\alpha \ge 1$ and $\gamma \ge 1$, one can take $\beta=2$ and thus gets a convergence rate $n^{\frac 12-\frac 1\alpha}$ in the Wasserstein-2 distance, which is not accessible by our  Stein's method. The theorem was proved by an idea from Lindeberg method and a coupling. More precisely, take a sequence of i.i.d. $\mu$-distributed random variables $Y_{1},...,Y_{n}$, since $n^{-1/\alpha} (Y_{1}+...+Y_{n})$ has the distribution $\mu$, it is easy to see that
\Bes
\begin{split}
d^{\beta}_{\beta} (\mcl L(S_{n}), \mu)& \ = \ n^{-\frac \beta \alpha} d^{\beta}_{\beta}\big(\sum_{i=1}^{n}X_{i}, \sum_{i=1}^{n}Y_{i}\big)  \ = \ n^{-\frac \beta \alpha} d^{\beta}_{\beta}\big(\sum_{i=1}^{n}X^{*}_{i}, \sum_{i=1}^{n}Y^{*}_{i}\big) \ \le \ n^{-\frac \beta \alpha}\E\big|\sum_{i=1}^{n}(X^{*}_{i}-Y^{*}_{i})\big|^{\beta},
\end{split}
\Ees
where $(X_{i}^*,Y_{i}^*)$ is a coupling of the distributions of $X_{i}$ and $Y_{i}$ \cite[(3)]{JoSa05}, which enjoys the property $\E |X^{*}_{i}-Y^{*}_{i}|^{\beta}=d^{\beta}_{\beta}(X_{i},Y_{i})$ for each $i$, and $\{(X_{i}^*,Y_{i}^*)\}_{1 \le i \le n}$ are independent. The previous relation, together with an inequality by von Bahr and Esseen \cite[(11), (12)]{JoSa05}, implies that $d^{\beta}_{\beta} (\mcl L(S_{n}), \mu)  \le  {2}{n^{-\beta/\alpha}} \sum_{i=1}^{n } d^{\beta}_{\beta}(X_{i},Y_{i})$. Since $d_{\beta}(X_{i},Y_{i})<\infty$ for some $\beta>\alpha$ \cite[Lemma 5.1]{JoSa05}, one immediately gets $d_{\beta} (\mcl L(S_{n}), \mu) = O(n^{\frac 1\beta-\frac 1\alpha})$.
\vskip 3mm

The organization of the paper is as follows. Section \ref{s:MThm} introduces notations and gives the two main theorems, while Section \ref{s:Ex} applies them to study three examples. The proofs of the two main theorems are given in Sections \ref{s:MThmProof} and \ref{s:MThm2Proof}  respectively, and the regularities of Stein's equation are proved in the 6th section. The last section is an appendix about the fourth example and some details of heat kernel estimates.

\vskip 3mm

{\bf Acknowledgements}: The author would like to gratefully thank editors and anonymous referees for very valuable corrections, suggestions and comments, which lead us to improve the paper. The author also would like to gratefully thank Zhen-Qing Chen, Elton Hsu, Tiefeng Jiang, Michel Ledoux, Ivan Nourdin, Gesine Reinert, Qi-Man Shao and  Ai-Hua Xia for very helpful discussions and comments. Special thanks are due to Rui Zhang, Xinghu Jin and Peng Chen for their going through the whole paper very carefully and giving numerous suggestions and corrections. This research is supported by the following grants:  Macao S.A.R. FDCT (038/2017/A1, 030/2016/A1, 025/2016/A1), NNSFC 11571390, University of Macau MYRG (2016-00025-FST, 2018-00133-FST).
\ \ \ \vskip 3mm

\section{Main results} \label{s:MThm}
Recall that $W_1$ distance between two probability measures $\mu_{1}$ and $\mu_{2}$ is defined by
\Be \label{e:DW12}
d_W(\mu_{1},\mu_{2})=\inf_{(X,Y) \in \mcl C(\mu_{1},\mu_{2})} \E |X-Y|,
\Ee
where $\mcl C(\mu_{1},\mu_{2})$ is the set of all the coupling realizations of $\mu_{1},\mu_{2}$. By a duality,
$$
d_W(\mu_{1},\mu_{2})\ =\ \sup_{h \in {\rm Lip}(1)} |\mu_{1}(h)-\mu_{2}(h)|,
$$
where ${\rm Lip}(1)=\{h: \R \rightarrow \R; \ |h(y)-h(x)| \le |y-x|\}$ and
$$\mu_{i}(h)=\int_{\R} h(x) \mu_{i}(\dif x), \ \ \ \ i=1,2.$$
Note that $d_W$ is also called $L^1$ distance. The Kolmogorov distance of $\mu_{1}$ and $\mu_{2}$ is defined by
$$d_{{\rm Kol}}(\mu_{1}, \mu_{2}):\ =\ \sup_{x \in \R} |\mu_1\left((-\infty,x]\right)-\mu_2\left((-\infty,x]\right)|.$$

For a sequence of measures $\{\nu_n\}_n$, we say they weakly converge to a measure $\nu$, denoted by $\nu_n \Rightarrow \nu$, if
$$\lim_{n \rightarrow \infty} \nu_{n}(f) \ = \ \nu(f)$$
for $f \in \mcl C_{b}(\R)$, all bounded continuous functions $f: \R \rightarrow \R$.
We use $C_{p}$ to denote some number which depends on parameter $p$, the exact value of $C_p$ may vary from line to line. We denote $\mcl L(X)$ the distribution of a given random variable $X$.
\ \ \ \vskip 3mm

Recall \eqref{e:Ker1-1} and \eqref{e:StaKer} in the introduction:
\Be \label{e:StaKer-1}
\mcl K_\alpha(t,N)=\frac{d_\alpha}{\alpha(\alpha-1)}\left(|t|^{1-\alpha}-N^{1-\alpha}\right),
\Ee
\Be  \label{e:Ker1-1-1}
 K_i(t,N)\ =\ \E\left[\zeta_{n,i} 1_{\{0 \le t \le \zeta_{n,i}  \le N\}}-\zeta_{n,i} 1_{\{-N \le  \zeta_{n,i}  \le t \le 0\}}\right],
\Ee
where $d_\alpha=\left(\int_{-\infty}^\infty \frac{1-\cos y}{|y|^{1+\alpha}} \dif y\right)^{-1}$ and $1 \le i \le n$. Note $d_{\alpha}=\frac{\alpha 2^{\alpha-1} \Gamma(\frac{1+\alpha}2)}{\sqrt{\pi} \Gamma(1-\frac \alpha2)}$ and $\lim_{\alpha \uparrow 2} \frac{d_{\alpha}}{2-\alpha}=1$, see \cite[p. 2800]{ChWa14}. Recall  the Gamma and Beta functions are respectively defined by
\Bes
\Gamma(x)=\int_{0}^{\infty} t^{x-1}e^{-t} \dif t, \ \ x>0;
\ \ \ \ \ \ {\rm B}(x,y)=\int_{0}^{1} t^{x-1}(1-t)^{y-1} \dif t, \ \ x>0, y>0.
\Ees

Let us now state our first main result, which is a general theorem giving a rate of stable law convergence in $W_1$ distance.
\begin{thm}  \label{t:MainThm} \footnote{Elton Hsu pointed out to the author that the condition '$S_{n} \Rightarrow \mu$' in Theorem \ref{t:MainThm} of the first draft can be removed.}
Let $n \in \N$ and let $\zeta_{n,1},...,\zeta_{n,n}$ be a sequence of independent random variables with $\E \zeta_{n,i}=0$ and $\E |\zeta_{n,i}|<\infty$ for $1 \le i \le n$.
Let $\mu$ be an $\alpha$-stable distribution with characteristic function $e^{-|\lambda|^\alpha}$
for $\alpha \in (1,2)$.  Then, we have
\ \ \
\Bes
\begin{split}
d_W\left(\mcl L (S_n), \mu\right) \ \le \ D_{\alpha}  \sum_{i=1}^n\int_{-N}^N \left|\frac{\mcl K_\alpha(t,N)}n -\frac{ K_i(t,N)}{\alpha}\right| \dif t \ +\ \mcl R_{N,n} \ \ \ \ \ \forall \ N>0,
\end{split}
\Ees
where $\mcl K_\alpha(t,N)$ and $K_{i}(t,N)$ are defined as above, $D_{\alpha}=\frac 4 \pi \sqrt{\frac{2\alpha+1}{\alpha}}{\rm B}\big(\frac{\alpha-1}\alpha,\frac 2\alpha\big) $,
\Bes
\begin{split}
\mcl R_{N,n}\ = \ 2  \sum_{i=1}^{n} \E\big(|\zeta_{n,i}|  1_{\{|\zeta_{n,i}|>N\}}\big)+\frac{4 d_{\alpha}}{\alpha-1}\frac{1}{N^{\alpha-1}}+\frac {D_{\alpha,\gamma}}n \sum_{i=1}^{n}\E|\zeta_{n,i}|^\gamma \ \ \ \ \ \forall \ \gamma \in (0,1),
\end{split}
\Ees
with $D_{\alpha,\gamma}=\frac{d_{\alpha}}{\alpha} \left[\frac{16}{\pi(2-\alpha)}\sqrt{\frac{\alpha+3}{\alpha}}+\frac{16}{\pi(\alpha-1)} \sqrt{\frac{2\alpha+1}{\alpha}}\right] {\rm B}\big(\frac{1-\gamma}{\alpha}, \frac{\gamma+\alpha}{\alpha}\big)$.
\end{thm}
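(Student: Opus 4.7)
The plan is to use the duality
\begin{equation*}
d_W(\mcl L(S_n), \mu) = \sup_{h \in {\rm Lip}(1)} |\E[h(S_n)] - \mu(h)|,
\end{equation*}
and apply Stein's method for the $\alpha$-stable law $\mu$. For each $h \in {\rm Lip}(1)$, take $f=f_h$ to be the solution of the Stein equation $\Delta^{\alpha/2} f(x) - \tfrac{1}{\alpha} x f'(x) = h(x) - \mu(h)$; Section 6 will supply the regularity estimate $\|f''\| \le D_\alpha$, so it suffices to bound $\bigl|\E[\Delta^{\alpha/2} f(S_n) - \tfrac{1}{\alpha} S_n f'(S_n)]\bigr|$ uniformly in $h$.

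I would then develop the two representations sketched in (\ref{e:ESnFSn-1}) and (\ref{e:DecFx}). For the first, using $\E\zeta_{n,i}=0$ and independence of $\zeta_{n,i}$ from $S_n(i)=S_n-\zeta_{n,i}$, I write $\E[\zeta_{n,i} f'(S_n)] = \E\bigl[\zeta_{n,i}\int_0^{\zeta_{n,i}} f''(S_n(i)+s)\,ds\bigr]$; applying Fubini and truncating at level $N$ yields $\int_{-N}^N \E[K_i(t,N) f''(S_n(i)+t)]\,dt$ together with a truncation remainder of order $\E[|\zeta_{n,i}| 1_{\{|\zeta_{n,i}|>N\}}]$. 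For the second, I start from the L\'evy-type formula $\Delta^{\alpha/2} f(x) = d_\alpha \int_0^\infty \frac{f(x+y)+f(x-y)-2f(x)}{y^{1+\alpha}}\,dy$ and use the second-difference identity $f(x+y)+f(x-y)-2f(x)=\int_{-y}^y(y-|r|)f''(x+r)\,dr$; swapping the order of integration and restricting $|y|\le N$ exhibits the kernel $\tfrac{d_\alpha}{\alpha(\alpha-1)}|t|^{1-\alpha}$ on $[-N,N]$. The $-N^{1-\alpha}$ correction in $\mcl K_\alpha(t,N)$ is a boundary adjustment that forces $\mcl K_\alpha(\pm N,N)=0$ (matching an integration by parts in $t$); the leftover tail $|y|>N$ and boundary contributions combine, via $\|f'\|$-type bounds, into the remainder $\tfrac{4 d_\alpha}{\alpha-1} N^{1-\alpha}$.

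Subtracting the two representations weighted by $1/n$ and $1/\alpha$ produces exactly (\ref{e:StaDec}); bounding the main term by $\|f''\|\le D_\alpha$ times the $L^1$ discrepancy, and the boundary/tail pieces as above, gives the first two pieces of $\mcl R_{N,n}$. The third piece $\tfrac{D_{\alpha,\gamma}}{n}\sum_i \E|\zeta_{n,i}|^\gamma$ I expect to arise from a more careful treatment of a part of the representation where a naive bound of the form $\E[\zeta_{n,i}^2]\|f''\|$ would fail because $\zeta_{n,i}$ need not have a second moment. Using a H\"older-type interpolation such as $|\zeta_{n,i}|\cdot|\zeta_{n,i}|^{1-\gamma}\le N\cdot|\zeta_{n,i}|^\gamma$, combined with refined regularity of $f$, produces an integral against the $|t|^{1-\alpha}$ weight whose explicit value is a Beta function --- precisely the ${\rm B}((1-\gamma)/\alpha,(\gamma+\alpha)/\alpha)$ appearing in $D_{\alpha,\gamma}$.

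The main obstacle is the handling of the nonlocal boundary/tail contributions of $\Delta^{\alpha/2}$ at $|t|>N$ and $|t|=N$ without destroying the $L^1$-discrepancy structure: because $|t|^{1-\alpha}$ is not integrable at infinity for $\alpha\in(1,2)$, one cannot merely dominate these terms by $\|f''\|$, and the exact form of $\mcl K_\alpha(t,N)$ --- with the subtracted $N^{1-\alpha}$ --- is chosen precisely to marry the Stein-equation regularity estimates of Section 6 (pointwise bounds on $f''$ controlled by a fractional moment) with the $K$-function representation. Once these pieces are aligned, taking the supremum over $h\in{\rm Lip}(1)$ yields the stated inequality for every $N>0$ and every $\gamma\in(0,1)$.
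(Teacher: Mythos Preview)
Your overall structure is right and matches the paper closely: Stein's equation, the two kernel representations of $\E[S_n f'(S_n)]$ and $\Delta^{\alpha/2} f$, and the $L^1$-discrepancy bound via $\|f''\|\le D_\alpha\|h'\|$. Your derivation of the $\mcl K_\alpha$ kernel via the second-difference formula differs from the paper's (the paper first rewrites $\Delta^{\alpha/2}f(x)=\tfrac{d_\alpha}{\alpha}\int\tfrac{f'(x+z)-f'(x)}{\mathrm{sgn}(z)|z|^\alpha}\,dz$ and then uses $f'(x+z)-f'(x)=\int_0^z f''(x+t)\,dt$), but either route works, and the $-N^{1-\alpha}$ term falls out naturally from the Fubini swap $\int_t^N z^{-\alpha}\,dz$ rather than being a separate boundary patch.

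There is, however, a genuine gap in your identification of the $D_{\alpha,\gamma}$ term. It does \emph{not} come from a H\"older interpolation on $|\zeta_{n,i}|$ to avoid a second moment. The point you are missing is that the $K_i$-representation produces $f''(S_n(i)+t)$, while applying the $\mcl K_\alpha$-decomposition to $\Delta^{\alpha/2}f(S_n)$ would produce $f''(S_n+t)$; these evaluation points do not match. The paper resolves this by writing $\E[\Delta^{\alpha/2}f(S_n)]=\tfrac1n\sum_i\E[\Delta^{\alpha/2}f(S_n(i))]+\mcl R_3$ with
\[
\mcl R_3=\tfrac1n\sum_{i=1}^n\E\bigl[\Delta^{\alpha/2}f(S_n)-\Delta^{\alpha/2}f(S_n(i))\bigr],
\]
and then bounds $\mcl R_3$ using the $\gamma$-H\"older regularity of the function $x\mapsto\Delta^{\alpha/2}f(x)$ itself (Proposition~\ref{p:SolnF-1}): $|\Delta^{\alpha/2}f(S_n)-\Delta^{\alpha/2}f(S_n(i))|\le D_{\alpha,\gamma}\|h'\|\,|\zeta_{n,i}|^\gamma$. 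That is where the Beta function ${\rm B}((1-\gamma)/\alpha,(\gamma+\alpha)/\alpha)$ enters --- it comes from the time integral in the semigroup representation of $\Delta^{\alpha/2}f$, not from any spatial integral against $|t|^{1-\alpha}$. Without this step, your two representations cannot be subtracted to give \eqref{e:StaDec}, and your proposed mechanism for the third remainder term would not produce the stated constant.
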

\vskip 3mm
\begin{rem}
When $\alpha \le 1$, the stable distribution does not have its 1st moment, thus the corresponding $W_{1}$ is NOT well defined, see \eqref{e:DW12}. It is expected that $d_W(S_{n},\mu) \rightarrow \infty$ as $\alpha \downarrow 1$, this can be seen from
$$\lim_{\alpha \downarrow 1}D_{\alpha}=\infty, \ \ \ \lim_{\alpha \downarrow 1}D_{\alpha,\gamma}=\infty.$$ Moreover, $\lim_{\alpha \uparrow 2}D_{\alpha,\gamma}=\frac{2 \sqrt 5}{\pi} {\rm B}(\frac{1-\gamma}2, \frac{\gamma+2}{2})$ though there is a term $\frac{1}{2-\alpha}$ in $D_{\alpha,\gamma}$. Tables {\bf 1} and {\bf 2} give the values of $D_{\alpha}$ and $D_{\alpha,\gamma}$ respectively. Although $D_{\alpha,\gamma}$ is large, the term $\frac {D_{\alpha,\gamma}}n \sum_{i=1}^{n}\E|\zeta_{n,i}|^\gamma$ can be negligible in applications by taking $\gamma>2-\alpha$ and large $n$.
\end{rem}
\begin{rem}
Due to the lack of concentration phenomena of heavy tailed random variables sum, we can only observe the convergence after sampling a large number of random variables, see \cite[Section 5]{KuKe00} and Example 1 below. In applications, we take $\gamma=0.9$ so that the term $\frac {D_{\alpha,\gamma}}n \sum_{i=1}^{n}\E|\zeta_{n,i}|^\gamma$ will be small enough to be negligible as $n>10^{6}$.

\begin{table} \label{t:1}
\centering
\caption{The values of $D_{\alpha}$}
\begin{tabular}{c|ccccccccc}
\hline
    $\alpha$ &  1.1  & 1.2 & 1.3 & 1.4 & 1.5 & 1.6 & 1.7 & 1.8 & 1.9 \\
   \hline
    $D_{\alpha}$  & 22.14   & 11.45  & 8.04  & 6.42 & 5.51  & 4.94 & 4.57  & 4.32  & 4.15  \\
   \hline
\end{tabular}
\end{table}

\begin{table}  \label{t:2}
\centering
\caption{The values of $D_{\alpha,\gamma}$}
\begin{tabular}{c|ccccccccc}
\hline
&$\alpha=$ 1.1  & 1.2 & 1.3 & 1.4 & 1.5 & 1.6 & 1.7 & 1.8 & 1.9 \\
   \hline
$\gamma= $
0.1 & 33.13   & 19.01   &  14.40   &  12.17   & 10.89   & 10.09   &  9.55    & 9.18    &  8.91   \\
0.2 & 35.17   & 20.33   &  15.50   &  13.17   & 11.83   & 11.00   &  10.45   & 10.06   &  9.79   \\
0.3 & 38.59   & 22.43   &  17.17   &  14.64   & 13.20   & 12.30   &  11.70   & 11.30   &  11.01   \\
0.4 & 43.94   & 25.62   &  19.66   &  16.80   & 15.17   & 14.16   &  13.49   & 13.04   &  12.71   \\
0.5 & 52.30   & 30.53   &  23.45   &  20.05   & 18.12   & 16.92   &  16.13   & 15.59   &  15.21   \\
0.6 & 65.91   & 38.43   &  29.49   &  25.20   & 22.76   & 21.24   &  20.24   & 19.55   &  19.07   \\
0.7 & 90.04   & 52.33   &  40.06   &  34.16   & 30.79   & 28.69   &  27.31   & 26.36   &  25.68   \\
0.8 & 140.69  & 81.33   &  62.00   &  52.67   & 47.34   & 44.00   &  41.78   & 40.25   &  39.16   \\
0.9 & 298.18  & 171.06  &  129.58  &  109.52  & 98.02   & 90.78   &  85.95   & 82.58   &  80.16  \\
   \hline
\end{tabular}
\end{table}
\end{rem}

\begin{rem}   \label{r:Scaling}
If  $X$ has a stable distribution $\mu$ with characteristic function $e^{-|\lambda|^{\alpha}}$, then $\sigma^{1/\alpha} X$ has a distribution $\nu$ with characteristic function $e^{-\sigma |\lambda|^{\alpha}}$. By \eqref{e:DW12},
\Be
d_W(\mcl L(\sigma^{1/\alpha} S_{n}), \nu)\ =\ \sigma^{1/\alpha} d_W(\mcl L(S_{n}), \mu).
\Ee
On the other hand, it is easy to see from the definition of Kolmogorov distance that
$$
d_{{\rm Kol}}(\mcl L(\sigma^{1/\alpha} S_{n}), \nu)\ =\ d_{{\rm Kol}}(\mcl L(S_{n}), \mu).$$
\end{rem}



\vskip 3mm

Theorem \ref{t:MainThm} is a general theorem which bounds the $W_1$ distance of $\mcl L(S_{n})$ and $\mu$ by a discrepancy and a small remainder. An application of this theorem is to study the convergence rate of stable law. To this end, we first recall the classical stable law convergence theorem:
\vskip 2mm

 \begin{thm} [Theorem 3.7.2 of \cite{Dur10}]  \label{t:SLConvergence}
 Let $\xi_1,...,\xi_n,...$ be i.i.d. with a distribution that satisfies
 \ \ \ \
$$(i)\  \lim_{x \rightarrow \infty} \frac{\PP(\xi_1>x)}{\PP(|\xi_1|>x)}=\frac 12, \ \ \  \ \ \  \ \ \ (ii) \ \PP(|\xi_1|>x)=x^{-\alpha} L(x),$$
where $\alpha \in (0,2)$ and $L: [0, \infty) \rightarrow [0,\infty)$ is a slowly varying function, i.e. $\lim_{x \rightarrow \infty} \frac{L(tx)}{L(x)}=1$ for all $t>0$. Let $T_n=\xi_1+...+\xi_n$,
$A_n=\inf\{x: \PP(|\xi_1|>x) \le n^{-1}\}, \ B_n=n \E[\xi_1 1_{(|\xi_1| \le A_n)}].$ As $n \rightarrow \infty$, $(T_n-B_n)/A_n \Rightarrow \nu,$
 where $\nu$ is a symmetric stable distribution with characteristic function $\exp\left(-\frac{\alpha |\lambda|^{\alpha}}{2d_\alpha}\right)$. In particular, it follows from the property of stable distribution (see Remark \ref{r:Scaling}) that as $n \rightarrow \infty$,
\Be  \label{e:Scaling}
\left(\frac{\alpha}{2d_{\alpha}}\right)^{-\frac 1\alpha}\frac{T_{n}-B_{n}}{A_{n}}\Rightarrow \mu,
\Ee
where $\mu$ is a symmetric stable distribution with characteristic function $e^{-|\lambda|^{\alpha}}$.
\end{thm}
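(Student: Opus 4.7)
My plan is to prove the convergence by the classical characteristic function method combined with Karamata's theorem for regularly varying tails, with L\'evy's continuity theorem delivering the final weak convergence. Let $\phi(\lambda) = \E[e^{\im \lambda \xi_1}]$. Because $T_n$ is a sum of $n$ i.i.d.\ copies of $\xi_1$,
\Bes
\E\!\left[\exp\!\left(\im \lambda \frac{T_n - B_n}{A_n}\right)\right] \;=\; \phi(\lambda/A_n)^n \, e^{-\im \lambda B_n/A_n},
\Ees
and since $\phi(\lambda/A_n) - 1 \to 0$ I may pass to logarithms via $n \log \phi(\lambda/A_n) = n(\phi(\lambda/A_n)-1) + O(n|\phi(\lambda/A_n)-1|^2)$ and reduce the entire problem to identifying the limit of
\Bes
\Psi_n(\lambda) \;:=\; n\, \E\!\left[e^{\im \lambda \xi_1/A_n} - 1 - \im \lambda (\xi_1/A_n) 1_{\{|\xi_1|\le A_n\}}\right].
\Ees

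Next I would split $\Psi_n$ according to $\{|\xi_1|>A_n\}$ and $\{|\xi_1|\le A_n\}$. On the tail event, the symmetry (i) together with the regular variation (ii) and the defining identity $\PP(|\xi_1|>A_n) \sim 1/n$ allow, after the substitution $u = \xi_1/A_n$ and a Karamata-type argument applied to $\PP(|\xi_1|>aA_n) = (aA_n)^{-\alpha} L(aA_n) \sim a^{-\alpha}/n$ for each fixed $a$, to conclude that
\Bes
n\, \E\!\left[(e^{\im \lambda \xi_1/A_n} - 1) 1_{\{|\xi_1|>A_n\}}\right] \;\longrightarrow\; \int_{|u|>1} (e^{\im \lambda u} - 1) \, \frac{\alpha}{2|u|^{1+\alpha}} \dif u.
\Ees
On the small event, I would use $|e^{\im u} - 1 - \im u| \le u^2/2$ to secure integrability and Karamata's theorem in its $\int_0^R x^{1-\alpha} L(x) \dif x \sim \frac{1}{2-\alpha} R^{2-\alpha} L(R)$ form to control the truncated second moment; a parallel computation then yields
\Bes
n\, \E\!\left[(e^{\im \lambda \xi_1/A_n} - 1 - \im \lambda \xi_1/A_n) 1_{\{|\xi_1|\le A_n\}}\right] \;\longrightarrow\; \int_{|u|\le 1} (e^{\im \lambda u} - 1 - \im \lambda u) \, \frac{\alpha}{2|u|^{1+\alpha}} \dif u.
\Ees
Adding the two limits gives the L\'evy--Khintchine exponent of the symmetric $\alpha$-stable law with L\'evy measure $\tfrac{\alpha}{2}|u|^{-1-\alpha}\dif u$, and a direct change of variables $y = \lambda u$ combined with the very definition of $d_\alpha$ in \eqref{e:StaKer} evaluates this exponent as exactly $-\frac{\alpha |\lambda|^\alpha}{2 d_\alpha}$. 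Hence $(T_n - B_n)/A_n \Rightarrow \nu$ by L\'evy's continuity theorem, and the second assertion \eqref{e:Scaling} follows at once from the scaling identity in Remark \ref{r:Scaling}.

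The main technical obstacle I anticipate lies in the uniformity required for the two limits above: one must apply Karamata's theorem repeatedly in its two complementary forms to control both tail probabilities and truncated second moments, and to justify passing the limit inside the expectation on the small event one needs a dominated-convergence argument in which the dominating function comes precisely from the truncated second moment bound $n\E[(\xi_1/A_n)^2 1_{\{|\xi_1|\le A_n\}}] \sim \frac{2}{2-\alpha}$. Once these estimates are in place, the centering constant $B_n$ is consumed exactly by the subtracted linear term in $\Psi_n$, and verifying that the constants conspire to produce the exponent $-\frac{\alpha|\lambda|^\alpha}{2 d_\alpha}$ is routine bookkeeping that rests only on the defining identity for $d_\alpha$.
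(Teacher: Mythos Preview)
The paper does not give its own proof of this statement: Theorem~\ref{t:SLConvergence} is quoted verbatim as Theorem 3.7.2 of Durrett \cite{Dur10} and used as a black box, with only the additional scaling observation \eqref{e:Scaling} appended via Remark~\ref{r:Scaling}. So there is no ``paper's proof'' to compare against.

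That said, your sketch is the standard characteristic-function route and is essentially the argument Durrett himself uses. The decomposition of $\Psi_n(\lambda)$ into the tail and truncated pieces, the appeal to Karamata for the truncated second moment $n\,\E[(\xi_1/A_n)^2 1_{\{|\xi_1|\le A_n\}}]\sim \frac{\alpha}{2-\alpha}$ (your constant $\frac{2}{2-\alpha}$ should be $\frac{\alpha}{2-\alpha}$, by the way), and the identification of the L\'evy exponent via the definition of $d_\alpha$ are all correct in spirit. The one genuine subtlety you flag but do not fully resolve is the uniformity in the tail piece: to pass from the pointwise convergence $n\,\PP(|\xi_1|>aA_n)\to a^{-\alpha}$ to convergence of the integral $\int_{|u|>1}(e^{\im\lambda u}-1)\,\dif F_n(u)$ you need either a uniform-convergence theorem for regularly varying functions (Potter bounds) or a Helly-type argument for the vague convergence of the rescaled tail measures; simply citing Karamata pointwise is not enough. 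Durrett handles this by working with the vague convergence of measures $n\,\PP(\xi_1/A_n\in\cdot)$ on $\R\setminus\{0\}$ directly. If you fill in that step with Potter bounds to get an integrable majorant on $\{|u|>1\}$, your outline is complete.
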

\vskip 3mm

In \cite[Theorem 3.7.2]{Dur10}, the limit of (i) is a general $c \in (0,1)$ rather than $\frac12$. When $c \ne \frac 12$, the limiting stable distribution $\nu$ is not symmetric. From the remark in \cite[p. 138]{Dur10}, we know that the conditions (i) and (ii) are also necessary for the above weak convergence to stable law.
Similar as studying a Berry-Esseen bound for a central limit theorem, we need to strengthen (i) and (ii) to get a rate for the convergence \eqref{e:Scaling}.

We assume that there exist some $A>0$ and two continuous functions $M_1: \R_+ \rightarrow \R$ and $M_2: \R_+ \rightarrow \R$, with $\lim_{x \rightarrow \infty} M_1(x)=0$ and $\lim_{x \rightarrow \infty} M_2(x)=0$, such that for all $x>A$,
\ \ \ \
$$(i') \ \ \frac{\PP(\xi_1>x)}{\PP(|\xi_1|>x)}\ =\ \frac{1+M_1(x)}2, \ \ \ \ \ \ \ (ii') \ \ \frac{\PP(|\xi_1|>x)}{\theta x^{-\alpha}}\ =\ 1+M_2(x),$$
where $\theta>0$ is a constant. We note that (i') and (ii') are equivalent to the condition that $\xi_{1}$ lies in the normal domain of attraction of $\mu$, which is generally stated as for all $x>A$,
\Be \label{e:NormAttract}
\PP(\xi_{1} > x)\ =\ c_{1} x^{-\alpha}(1+\delta_1(x)), \ \ \ \ \PP(\xi_{1}<-x)\ =\ c_{2} x^{-\alpha}(1+\delta_2(x)),
\Ee
where $c_{1}, c_{2} \ge 0$ with $c_{1}+c_{2}>0$ and $\lim_{x \rightarrow \infty} \delta_1(x)=0$ and $\lim_{x \rightarrow \infty} \delta_2(x)=0$, see \cite[p. 350]{Hall82} or \cite[Definition 5.1]{JoSa05}. If $\delta_1$ and $\delta_2$ both polynomially decay to $0$, then we call $\xi_1$ is in the \emph{strong} normal domain of attraction of $\mu$, see \cite[Definition 5.2]{JoSa05}. In our case, $c_{1}=c_{2}=\frac \theta 2$.
\vskip 3mm

Denote $\ell_{n}=\frac{\alpha \theta}{2d_{\alpha}} n$ and $b_{t}=\ell_{n}^{\frac 1\alpha}t+\E \xi_{1}$ for $t>0$ and
\ \ \ \ \
\Be  \label{e:Rt}
R_t\ = \ \frac 12 b_t^{-\alpha} (1+M_{1}(b_{t}))(1+M_{2}(b_{t})) \E \xi_{1},
\Ee
\Be  \label{e:rt}
r_{t}\ =\ \frac 12 b_{t}^{1-\alpha}\left[M_1+M_2+M_1 M_2\right] (b_t)+\frac 12\int_{b_t}^\infty s^{-\alpha} \left[M_1+M_2+M_1 M_2\right] (s) \dif s.
\Ee
Our second main theorem, which is essentially an application of Theorem \ref{t:MainThm}, is
\begin{thm}   \label{t:MThm2}
Let $\alpha \in (1,2)$, and let $\xi_1,...,\xi_n,...$ be i.i.d. with a distribution satisfying the conditions (i') and (ii'). Write $\zeta_{n,i}=\ell_{n}^{-\frac 1\alpha}(\xi_i-\E \xi_i)$ and $S_n=\zeta_{n,1}+...+\zeta_{n,n}$, then  $\mcl L(S_n) \Rightarrow \mu$ with characteristic function
$e^{-|\lambda|^{\alpha}}.$
Moreover, we have
\ \ \
\Bes
\begin{split}
d_W\left(\mcl L (S_n), \mu\right) \ & \le \ \frac{D_{\alpha}}{\alpha}\int_{-N}^{N} \big|\alpha \mcl K_{\alpha}(t,N)-n K_{1}(t,N)\big| \dif t+\mcl R_{N,n} \ \ \ \ \ \forall \ N>0,
\end{split}
\Ees
where
\ \ \ \
\Bes
\begin{split}
\mcl R_{N,n} & \ = \ D_{\alpha,\gamma} \ell^{-\frac{\gamma}{\alpha}}_{n} \E |\xi_{1}-\E \xi_{1}|^{\gamma} \\
& \ \ \ \ + \frac{4d_{\alpha}}{\delta_{n}^{\alpha-1}} \left(\frac{1+\delta_{n}^{\alpha-1}}{\alpha-1}+\frac{1}{\delta_{n}}+\frac{M_{2}(\ell_{n}^{\frac 1\alpha} N \delta_{n})}{\delta_{n}}+\int_{\delta_{n}}^\infty \frac{M_{2}(r \ell_{n}^{\frac 1\alpha} N)}{r^{\alpha} \delta^{1-\alpha}_{n}} \dif  r\right) N^{1-\alpha} \ \ \ \ \forall \ \gamma \in (0,1),
\end{split}
\Ees
with $\delta_{n}=1-\ell^{-\frac1 \alpha}_{n} N^{-1} |\E \xi_{1}|$. In particular, if  $\E \xi_{1}=0$, we have
\ \ \
\Be \label{e:RnNSym}
\begin{split}
\mcl R_{N,n} \ = \ D_{\alpha,\gamma}  \E |\xi_{1}|^{\gamma} \ell^{-\frac{\gamma}{\alpha}}_{n}
+4 d_{\alpha} \left(\frac{\alpha+1}{\alpha-1}+M_2(\ell^{\frac 1\alpha}_n N)+\int_{1}^{\infty} \frac{M_{2}(\ell_{n}^{\frac 1\alpha} N r)}{r^{\alpha}} \dif r\right)N^{1-\alpha}.
\end{split}
\Ee
\end{thm}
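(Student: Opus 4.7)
The plan is to specialize Theorem~\ref{t:MainThm} to $\zeta_{n,i} := \ell_n^{-1/\alpha}(\xi_i - \E\xi_i)$ and then to translate the resulting abstract bound into the tail language of (i'), (ii'). The hypotheses of Theorem~\ref{t:MainThm} are met because $\alpha > 1$ together with (ii') forces $\E|\xi_1| < \infty$, so $\E\zeta_{n,i} = 0$ and $\E|\zeta_{n,i}| < \infty$. The weak convergence $\mcl L(S_n) \Rightarrow \mu$ follows from Theorem~\ref{t:SLConvergence}: under (ii') one has $A_n^\alpha \sim \theta n$, so the scaling $\ell_n^{1/\alpha}$ matches $(\alpha/(2d_\alpha))^{1/\alpha} A_n$ up to $1+o(1)$, and the centering discrepancy $n \E[\xi_1 1_{\{|\xi_1|>A_n\}}]/\ell_n^{1/\alpha}$ tends to zero because the symmetric balance built into (i') cancels the leading $x^{-\alpha}$ part of $\E[\xi_1 1_{\{|\xi_1|>A_n\}}]$ when $\alpha > 1$.

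Since the $\zeta_{n,i}$ are identically distributed, $K_i(t,N)$ equals $K_1(t,N)$ for every $i$; the discrepancy sum in Theorem~\ref{t:MainThm} collapses to $n \int_{-N}^N |\mcl K_\alpha(t,N)/n - K_1(t,N)/\alpha|\,\dif t = (1/\alpha)\int_{-N}^N |\alpha \mcl K_\alpha(t,N) - n K_1(t,N)|\,\dif t$, giving the leading term of the claim. The moment piece of the remainder simplifies immediately to $D_{\alpha,\gamma} \E|\zeta_{n,1}|^\gamma = D_{\alpha,\gamma} \ell_n^{-\gamma/\alpha} \E|\xi_1 - \E\xi_1|^\gamma$, matching the first entry of $\mcl R_{N,n}$.

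The bulk of the work is to estimate $2n\,\E(|\zeta_{n,1}| 1_{\{|\zeta_{n,1}|>N\}}) = 2n \ell_n^{-1/\alpha}\,\E(|\xi_1 - \E\xi_1| 1_{\{|\xi_1 - \E\xi_1|>M\}})$ with $M := \ell_n^{1/\alpha} N$. I would combine: (a) the layer-cake identity $\E(|X| 1_{\{|X|>M\}}) = M\PP(|X|>M) + \int_M^\infty \PP(|X|>x)\,\dif x$; (b) the inclusion $\{|\xi_1 - \E\xi_1|>x\} \subset \{|\xi_1|>x - |\E\xi_1|\}$, which brings in the shift parameter $\delta_n = 1 - \ell_n^{-1/\alpha} N^{-1} |\E\xi_1|$ via $M - |\E\xi_1| = M\delta_n$; (c) the tail formula $\PP(|\xi_1|>y) = \theta y^{-\alpha}(1 + M_2(y))$ coming from (ii'); and (d) the change of variable $y = Mr$ on the $M_2$ part of the integral, producing $\int_{\delta_n}^\infty r^{-\alpha} M_2(Mr)\,\dif r$. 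The crucial simplification is the defining identity $n\theta/\ell_n = 2d_\alpha/\alpha$, which converts every prefactor into a multiple of $4 d_\alpha N^{1-\alpha}$. Adding the $4 d_\alpha/(\alpha-1)\, N^{1-\alpha}$ term already present in Theorem~\ref{t:MainThm} and regrouping so that $\delta_n^{\alpha-1}$ emerges as a common denominator yields the stated $\mcl R_{N,n}$. When $\E\xi_1 = 0$ one has $\delta_n = 1$, and the formula collapses directly to \eqref{e:RnNSym}.

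The principal obstacle is steps (b)--(d): the shift $\E\xi_1$ mixes nontrivially with both the power-law tail and the $M_2$ remainder at every scale, so reaching the precise form of the statement (rather than merely the right asymptotic order) requires careful bookkeeping of the $\delta_n$-dependent factors throughout.
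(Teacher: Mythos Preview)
Your proposal is correct and follows essentially the same route as the paper's proof: specialize Theorem~\ref{t:MainThm} to the i.i.d.\ case, collapse the discrepancy sum, handle the $\gamma$-moment term directly, and for the tail piece apply Lemma~\ref{l:MomEst} (your layer-cake step~(a)), the shift inclusion producing $\delta_n$ (your step~(b)), the tail formula from~(ii') (step~(c)), and the change of variable $r=y/(\ell_n^{1/\alpha}N)$ together with $n\theta/\ell_n=2d_\alpha/\alpha$ (step~(d)). The bookkeeping you flag as the main obstacle is exactly the content of the paper's computation and proceeds just as you outline.
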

\vskip 3mm

It is worthy of stating the following corollary of Theorem \ref{t:MThm2}, from which we can fast determine the order of convergence rates.
\begin{corollary}  \label{c:MThm2}
Assume that the same conditions as in Theorem \ref{t:MThm2} hold. We have
\ \ \ \ \
\Bes
d_W\left(\mcl L (S_n), \mu\right) \ \le \ C_{\alpha} \left(n^{-\frac{2-\alpha}{\alpha}}+N^{1-\alpha}+n^{1-\frac 1\alpha}N |r_{N}|+n^{1-\frac 1\alpha} \int_{4(A+|\E \xi_{1}|) \ell^{-\frac 1\alpha}_{n} \le |t| \le N} |r_{t}| \dif t\right).
\Ees
\end{corollary}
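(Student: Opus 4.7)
The plan is to start from the bound of Theorem \ref{t:MThm2} and to reorganise the discrepancy integral so that it is expressed in terms of the functionals $r_{t}$ and $r_{N}$ that appear in the statement. First I would compute $nK_{1}(t,N)$ explicitly. Writing $\eta_{1}:=\xi_{1}-\E\xi_{1}$ and recalling that $\zeta_{n,1}=\ell_{n}^{-1/\alpha}\eta_{1}$, one has, for $t>0$,
\[
K_{1}(t,N)\ =\ \ell_{n}^{-1/\alpha}\!\!\int_{b_{t}}^{b_{N}}\!\!y\,\dif F_{\eta}(y),
\]
with the symmetric expression for $t<0$. The tail estimates (i') and (ii') give, after translation from $\xi_{1}$ to $\eta_{1}$, the expansion $\PP(\eta_{1}>y)=\tfrac{\theta}{2}y^{-\alpha}\bigl(1+\widetilde M(y)\bigr)$ for $y>A$, where $\widetilde M:=M_{1}+M_{2}+M_{1}M_{2}$. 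An integration by parts followed by the substitution $y=\ell_{n}^{1/\alpha}s$ and the identity $\tfrac{\theta}{2\ell_{n}}=\tfrac{d_{\alpha}}{\alpha n}$ is expected to yield, for $t$ with $|b_{t}|>A$,
\[
\alpha\,\mcl K_{\alpha}(t,N)\ -\ nK_{1}(t,N)\ =\ \frac{2d_{\alpha}}{\alpha}\,\ell_{n}^{(\alpha-1)/\alpha}\bigl(r_{N}-r_{t}\bigr)\ +\ (\text{translation correction}),
\]
the leading term of the integration by parts reproducing the singular part of $\mcl K_{\alpha}$ exactly and the subleading term collecting precisely the contributions that define $r_{t}$ and $r_{N}$ in \eqref{e:Rt}--\eqref{e:rt}.

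Next I would split the integral $\int_{-N}^{N}|\alpha\mcl K_{\alpha}(t,N)-nK_{1}(t,N)|\,\dif t$ at the threshold $|t|=4(A+|\E\xi_{1}|)\ell_{n}^{-1/\alpha}$, chosen precisely so that $|b_{t}|>A$ on the outer part (with a safety margin to control the translation correction). On the outer region the identity above furnishes the pointwise bound $\lesssim \ell_{n}^{(\alpha-1)/\alpha}\bigl(|r_{t}|+|r_{N}|\bigr)$, and integration in $t$ produces exactly $n^{1-1/\alpha}N|r_{N}|+n^{1-1/\alpha}\!\int|r_{t}|\,\dif t$ once $\ell_{n}^{(\alpha-1)/\alpha}\asymp n^{1-1/\alpha}$ is used. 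On the inner region, where the asymptotics are not available, I would bound the two kernels crudely: $\int_{|t|\le C\ell_{n}^{-1/\alpha}}|\mcl K_{\alpha}(t,N)|\,\dif t\lesssim \ell_{n}^{-(2-\alpha)/\alpha}=n^{-(2-\alpha)/\alpha}$, while $n|K_{1}(t,N)|\le n\E|\zeta_{n,1}|\lesssim\ell_{n}^{1-1/\alpha}$ integrated over a region of length $O(\ell_{n}^{-1/\alpha})$ also gives $O(n^{-(2-\alpha)/\alpha})$. So the inner region feeds only into the first term of the corollary.

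Finally, I would simplify the remainder $\mcl R_{N,n}$ coming from Theorem \ref{t:MThm2}. Choosing $\gamma\in(2-\alpha,1)$, which is contained in $(0,\alpha)$ because $\alpha\in(1,2)$ and hence guarantees $\E|\eta_{1}|^{\gamma}<\infty$, makes $D_{\alpha,\gamma}\ell_{n}^{-\gamma/\alpha}\E|\eta_{1}|^{\gamma}=O(n^{-\gamma/\alpha})=O(n^{-(2-\alpha)/\alpha})$. The remaining summand of $\mcl R_{N,n}$ reduces to a constant multiple of $N^{1-\alpha}$ once one observes that the bracketed factor is uniformly bounded in $n$ and $N$: $|M_{2}|$ is bounded on $[A,\infty)$ and $\int_{1}^{\infty}r^{-\alpha}\,\dif r<\infty$. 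Adding the outer and inner contributions to the discrepancy integral to the simplified remainder produces the four-term bound of the corollary with a constant $C_{\alpha}$ depending only on $\alpha$.

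The main technical obstacle is the identification performed in the first paragraph: matching $nK_{1}(t,N)-\alpha\mcl K_{\alpha}(t,N)$ with $\tfrac{2d_{\alpha}}{\alpha}\ell_{n}^{(\alpha-1)/\alpha}(r_{t}-r_{N})$ is a clean integration by parts when $\E\xi_{1}=0$, but in the general case one must Taylor-expand $(y+\E\xi_{1})^{-\alpha}$ around $y^{-\alpha}$ and verify that each translation error generated is itself absorbed either into the $n^{-(2-\alpha)/\alpha}$ term or into $N^{1-\alpha}$; this is the reason for the safety factor $4$ in the cut-off $4(A+|\E\xi_{1}|)\ell_{n}^{-1/\alpha}$. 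Once this bookkeeping is complete the rest of the proof is routine.
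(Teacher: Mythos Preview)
Your proposal is correct and follows essentially the same route as the paper: start from Theorem \ref{t:MThm2}, split the discrepancy integral at the threshold $4(A+|\E\xi_{1}|)\ell_{n}^{-1/\alpha}$, bound the inner region crudely, and on the outer region use integration by parts (the paper invokes Lemma \ref{l:MomEst}) together with the tail expansion to produce the $r_t$ and $r_N$ terms, handling the shift by $\E\xi_1$ via a Taylor expansion whose errors are absorbed into $n^{-(2-\alpha)/\alpha}$ and $N^{1-\alpha}$. The only organisational difference is that the paper works with $\xi_1$ and $b_t=\ell_n^{1/\alpha}t+\E\xi_1$ directly and carries the extra term $R_t$ of \eqref{e:Rt} explicitly (bounding $|R_t|\le Ct^{-\alpha}n^{-1}$), whereas you phrase everything through $\eta_1=\xi_1-\E\xi_1$ and put $R_t$ together with the Taylor error into a single ``translation correction''; the content is the same.
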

\vskip 3mm
We end this section with the following lemma, which will be used from time to time later.
\begin{lem} \label{l:MomEst}
Let $X$ be a random variable, for any $t>0$ we have
\Be
\E\left[X 1_{\{X>t\}}\right]=t \PP(X>t)+\int_t^\infty \PP(X>r)\dif r.
\Ee
\end{lem}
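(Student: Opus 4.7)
The plan is to prove this by a straightforward Fubini/layer-cake argument, treating $t > 0$ as fixed. The key observation is that on the event $\{X > t\}$ we have $X > 0$, so $X$ is effectively non-negative on the set where the indicator is nontrivial, and hence the tail-integral formula $\E[Y] = \int_0^\infty \PP(Y > s)\,\dif s$ applies to the relevant non-negative quantity.

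First I would decompose $X \one_{\{X>t\}} = t\,\one_{\{X>t\}} + (X-t)\,\one_{\{X>t\}}$, noticing that the second summand is a non-negative random variable. Taking expectations, the first term immediately contributes $t\,\PP(X>t)$. For the second term, I would apply the layer-cake identity to $(X-t)\one_{\{X>t\}} \ge 0$:
\[
\E\bigl[(X-t)\one_{\{X>t\}}\bigr] \ =\ \int_0^\infty \PP\bigl((X-t)\one_{\{X>t\}} > s\bigr)\,\dif s \ =\ \int_0^\infty \PP(X > t+s)\,\dif s,
\]
where the last equality uses that $(X-t)\one_{\{X>t\}} > s$ for $s > 0$ is equivalent to $X > t+s$. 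The substitution $r = t+s$ then yields $\int_t^\infty \PP(X>r)\,\dif r$, and adding the two contributions gives the claimed identity.

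Alternatively, one can do everything in one step via Fubini: on $\{X>t\}$ one has $X\one_{\{X>t\}} = \int_0^\infty \one_{\{X > \max(s,t)\}}\,\dif s$, and taking expectation and splitting the $s$-integral at $t$ produces both terms simultaneously. There is no real obstacle here — the only mild care needed is to verify that the layer-cake formula is applied to a non-negative variable, which is why the split at level $t$ (rather than $0$) is natural; this also makes the identity work for signed $X$ without requiring any integrability beyond what is implicit in the right-hand side being well defined.
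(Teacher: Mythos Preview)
Your proposal is correct and essentially identical to the paper's proof: the paper writes $X\,1_{\{X>t\}}=\int_0^\infty 1_{\{0\le r<X\}}1_{\{X>t\}}\,\dif r$, applies Fubini, and splits the $r$-integral at $t$ --- precisely your ``alternative'' route. Your primary argument (split off $t\,1_{\{X>t\}}$ first, then layer-cake the nonnegative remainder) is just a cosmetic rearrangement of the same computation.
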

\begin{proof}
Observe by Fubini's Theorem that
\Be
\begin{split}
\E\left[X 1_{\{X>t\}}\right]\ &=\ \int_0^\infty \E \left[1_{\{0 \le r < X\}} 1_{\{X>t\}}\right] \dif r \\
&\ =\ \int_0^t \E \left[1_{\{X>t\}}\right] \dif r+\int_t^\infty \E \left[1_{\{X>r\}}\right] \dif r,
\end{split}
\Ee
from which we immediately obtain the inequality in the lemma, as desired.
\end{proof}

\section{Three examples} \label{s:Ex}

In this section, we shall use our results to study three examples which have been considered \cite{DaNa02, KuKe00, Hall81, AMPS16}, the known literatures only gave the order of convergence rates in Kolmogorov distance. In contrast, using our Theorems \ref{t:MainThm} and \ref{t:MThm2}, we can obtain explicit bounds for these examples in $W_{1}$ distance, and fast determine the order of the convergence rates by Corollary \ref{c:MThm2}. In the regime $\alpha \in (1,2)$, most of our results are as good as or better than the known ones.

In the appendix, we further consider the fourth example which is out of the scope of normal domain of attraction of stable law. A related example was studied in \cite{JuPa98} and the convergence rate in Kolmogorov distance is $(\log n)^{-1}$. By our results, we obtain a rate $(\log n)^{-1+\frac 1\alpha}$. Because the calculation is very complicated and long, we will not give an explicit bound but only figure out its leading order in the appendix.   \\

\noindent {\bf Example 1: Pareto distribution case \cite{DaNa02, KuKe00}}

Assume that $\xi_1,...,\xi_n,...$ be i.i.d. with a Pareto distribution with $\alpha \in (1,2)$, i.e.,
\ \ \ \
$$\PP\left(\xi_1 \ge x\right)=\frac{1}{2 |x|^{\alpha}}, \ \ x \ge 1,\ \ \ \ \ \ \ \ \ \PP\left(\xi_1 \le x\right)=\frac{1}{2 |x|^{\alpha}}, \ \ x \le -1,$$
i.e., $\xi_1$ has a density function $p(x)$:
$$p(x)=0, \ \ \ |x| \le 1; \ \ \ \ \ \ \ \ \ \ \ p(x)=\frac{\alpha}{2|x|^{\alpha+1}}, \ \ \ \ |x|>1. $$
By Theorem \ref{t:SLConvergence}, we have $B_n=0$ and $A_n= n^{1/\alpha}$. Denote $\ell_{n}=\frac{\alpha}{2 d_{\alpha}} n$ and
$$\zeta_{n,i}\ =\ \ell_{n}^{-\frac 1\alpha} \xi_i, \ \ \ \ \ \ \ \ i=1,...,n,$$
 $S_n$ weakly converges to a stable distribution $\mu$ with characteristic function $e^{-|\lambda|^{\alpha}}$. We can directly apply Theorem \ref{t:MThm2} to get a convergence rate $n^{-\frac{2-\alpha}{\alpha}}$, but it is very instructive to prove this rate by applying Theorem \ref{t:MainThm} directly.
\vskip 2.5mm

It is straightforward to check that the terms in $\mcl R_{N,n}$ are
\ \ \
 \Bes
 \begin{split}
& 2  \sum_{i=1}^{n} \E\big(|\zeta_{n,i}|  1_{\{|\zeta_{n,i}|>N\}}\big) \ = \ \frac{4 d_{\alpha}}{\alpha-1} N^{1-\alpha}, \\
& \frac {D_{\alpha,\gamma}}n \sum_{i=1}^{n}\E|\zeta_{n,i}|^{\gamma}\ = \  \frac{\alpha D_{\alpha, \gamma}}{\alpha-\gamma} \left(\frac{2 d_{\alpha}}{\alpha}\right)^{\frac{\gamma}{\alpha}}  n^{-\frac{\gamma}{\alpha}},
 \end{split}
 \Ees
 thus
$$\mcl R_{N,n} \ = \ \frac{8 d_{\alpha}}{\alpha-1} N^{1-\alpha}+\frac{\alpha D_{\alpha, \gamma}}{\alpha-\gamma} \left(\frac{2 d_{\alpha}}{\alpha}\right)^{\frac{\gamma}{\alpha}}  n^{-\frac{\gamma}{\alpha}}.$$
It remains to compute the integral term in the bound of Theorem \ref{t:MainThm}. Recall \eqref{e:Ker1-1-1}, when $t \ge 0$,
\Be
\begin{split}
 K_1(t,N) & \ = \ \E\left[\zeta_{n,1} 1_{\{0 \le t \le \zeta_{n,1}  \le N\}}\right] \\
&\ =\ \ell_n^{-\frac{1}{\alpha}} \int_{\ell_n^{1/\alpha} t}^{\ell_n^{1/\alpha} N} x p(x) \dif x \\
&\ =\  \frac{\alpha}{2\ell_n(\alpha-1)} \left[\left(t \vee \frac{1}{\ell_n^{1/\alpha}}\right)^{-\alpha+1} -\frac 1{N^{\alpha-1}}\right].
\end{split}
\Ee
By the symmetry property of $p(x)$, we have
\Be
\begin{split}
 K_1(t,N)&\ =\  \frac{\alpha}{2\ell_n(\alpha-1)} \left[\left(|t| \vee \frac{1}{\ell_n^{1/\alpha}}\right)^{-\alpha+1} -\frac 1{N^{\alpha-1}}\right], \ \ \ \ \ t \le 0.
\end{split}
\Ee
Hence,
\Be
\begin{split}
& \ \ \ \ \ \ \sum_{i=1}^n\int_{-N}^N \left|\frac{\mcl K_\alpha(t,N)}n -\frac{ K_i(t,N)}{\alpha   }\right| \dif t \\
& \ = \  \int_{-N}^N \left|\frac{d_\alpha}{\alpha (\alpha-1)} \left(\frac{1}{|t|^{\alpha-1}}-\frac{1}{N^{\alpha-1}}\right)-\frac{n K_1(t,N)}{\alpha   }\right|\dif t \\
&\ =\  \frac{d_\alpha}{\alpha (\alpha-1)} \int_{-N}^N \left|\frac{1}{|t|^{\alpha-1}}-\left(|t| \vee \frac{1}{\ell_n^{1/\alpha}}\right)^{-\alpha+1}\right|\dif t \\
& \ = \ \frac{1}{2-\alpha}\left(\frac{2 d_{\alpha}}{\alpha}\right)^{\frac 2 \alpha }  n^{-\frac{2-\alpha}{\alpha}}.
\end{split}
\Ee
So, we have
\ \  \
\Be
d_W(\mcl L(S_n), \mu) \ \le \ \frac{8 d_{\alpha}}{\alpha-1} N^{1-\alpha}+\frac{D_{\alpha}}{2-\alpha}\left(\frac{2 d_{\alpha}}{\alpha}\right)^{\frac 2 \alpha }  n^{-\frac{2-\alpha}{\alpha}}+\frac{\alpha D_{\alpha, \gamma}}{\alpha-\gamma} \left(\frac{2 d_{\alpha}}{\alpha}\right)^{\frac{\gamma}{\alpha}}  n^{-\frac{\gamma}{\alpha}}.
\Ee
Since $N$ is arbitrary, let $N \rightarrow \infty$, we get
\Be  \label{e:dEx1}
d_W(\mcl L(S_n), \mu) \ \le \ \frac{D_{\alpha}}{2-\alpha} \left(\frac{2 d_{\alpha}}{\alpha}\right)^{\frac 2 \alpha }  n^{-\frac{2-\alpha}{\alpha}}+\frac{\alpha D_{\alpha, \gamma}}{\alpha-\gamma} \left(\frac{2 d_{\alpha}}{\alpha}\right)^{\frac{\gamma}{\alpha}}  n^{-\frac{\gamma}{\alpha}} \ \ \ \ \ \ \ \forall \ \gamma \in (0,1).
\Ee
\vskip 3mm

Let us compare our result with the known results in literatures.
The reference \cite{KuKe00} gave a convergence rate:
$$d_{\rm Kol}(\mcl L(S_n), \mu) \ \le \  C_\alpha\begin{cases} n^{-\frac{2-\alpha}\alpha}, \ \ \ \ \ & \alpha \in (1,2), \\
n^{-1}, \ \ \ & \alpha \in (0,1],
\end{cases}
$$
where an exact value of $C_{\alpha}$ was not given.

When $\alpha \in (1,2)$, the authors of \cite{DaNa02} obtained a rate $n^{-\frac{2-\alpha}{d+\alpha}}$ for $d$ dimensional stable law in total variation distance and conjectured that the rate can be improved to $n^{-\frac{2-\alpha}{\alpha}}$ in $L^{1}$ or total variation distance. Our result gives a positive answer to their conjecture for the $L^{1}$ distance case when $d=1$.
\vskip 3mm

Table {\bf 3} gives exact bounds of $d_{W}(\mcl L(S_n), \mu)$ as $n=10^{6}$ according to \eqref{e:dEx1}, which vary according to $\alpha$ and $\gamma$.  Due to the lack of concentration phenomena in heavy detailed random variables sum, in simulations one has to take large samples (often more than $10^{6}$) to observe the convergence.  \cite[Section 5]{KuKe00} only simulated the limiting behavior of $\E|S_{n}|$ for $\alpha=1.5$, the convergence can be well observed only after the sample size reaches $10^{6}$.

\begin{table}  \label{t5-5}
\centering
\caption{Exact bounds of $d_{W}(\mathcal{L}(S_{n}),\mu)$ with $n=10^{6}$}
\begin{tabular}{c|ccccccccc}
\hline
 &$ \alpha=$ 1.1  & 1.2 & 1.3 & 1.4 & 1.5 & 1.6 & 1.7 & 1.8 & 1.9 \\
   \hline
$\gamma= $
0.1 & 9.906  &  6.245  &  5.121 &   4.636 &   4.424 &   4.399 &    4.588 &    5.114 &    6.174  \\
0.2 & 3.176  &  2.213  &  1.975 &   1.925 &   1.970 &   2.112 &    2.418 &    3.030 &    4.154 \\
0.3 & 1.066  &  0.818  &  0.792 &   0.833 &   0.921 &   1.087 &    1.407 &    2.032 &    3.177 \\
0.4 & 0.377  &  0.317  &  0.333 &   0.380 &   0.462 &   0.617 &    0.926 &    1.544 &    2.694 \\
0.5 & 0.142  &  0.131  &  0.149 &   0.186 &   0.255 &   0.396 &    0.692 &    1.300 &    2.451 \\
0.6 & 0.059  &  0.058  &  0.073 &   0.101 &   0.160 &   0.289 &    0.576 &    1.177 &    2.327 \\
0.7 & 0.027  &  0.029  &  0.040 &   0.063 &   0.115 &   0.238 &    0.518 &    1.114 &    2.263 \\
0.8 & 0.016  &  0.018  &  0.026 &   0.046 &   0.095 &   0.214 &    0.490 &    1.084 &    2.232  \\
0.9 & 0.014  &  0.015  &  0.023 &   0.042 &   0.091 &   0.210 &    0.487 &    1.081 &    2.230 \\
 \hline
\end{tabular}
\end{table}
\ \\
\noindent {\bf Example 2: Convergence rate of Pareto densities with modified tails (\cite[Section 3]{KuKe00}, \cite[Appendix B]{AMPS16})}
\vskip 3mm
 In \cite[Section 3]{KuKe00}, a sequence of i.i.d random variables $(\xi_{n})_{n \ge 1}$ with the following density were considered:
\Be
p(x)\ =\ \frac{A}{|x|^{1+\alpha}}+\frac{B}{|x|^{1+\beta}} \ \ {\rm for} \ \ |x|>a,  \ \ \ \ \ \ \ p(x)\ =\ 0 \ \ {\rm for} \ \ |x| \le a,
\Ee
where $0<\alpha<2$, $\alpha<\beta$, $A>0$ and $B>0$.
When $\beta>2 \alpha$, it was proved that $n^{-1/\alpha} \sum_{i=1}^{n} \xi_{i}$ converges to a stable distribution $\nu$ in Kolmogorov distance with a rate $n^{-\frac{2-\alpha}{\alpha}}$ for $\alpha \in (1,2)$ and a rate $n^{-1}$ for $\alpha \in (0,1]$. When $\beta \in (\alpha, 2 \alpha)$, the rate is $n^{-\min\left(\frac{\beta}{\alpha}-1, \frac{2-\alpha}{\alpha}\right)}$ for $\beta \ne 2$ and $n^{-\frac{2-\alpha}{\alpha}} \log n$ for $\beta=2$, See \cite[(3.6), Table 1]{KuKe00}.
\vskip 3mm

We now determine an explicit bound for $d_{W}(\mcl L(S_{n}), \mu)$ by Theorem \ref{t:MThm2}. Without loss of generality, we assume $a=1$ (otherwise take $\tl \xi_{i}=a^{-1} \xi_{i}$) and thus have
\Be \label{e:SpEx2}
p(x)\ =\ \frac{A}{2|x|^{1+\alpha}}+\frac{B}{2|x|^{1+\beta}} \ \ {\rm for} \ \ |x|>1,  \ \ \ \ \ \ \ p(x)\ =\ 0 \ \ {\rm for} \ \ |x| \le 1,
\Ee
with $\frac A{\alpha}+\frac B{\beta}=1$ and $\beta \in (\alpha,\infty)$. We can easily determine
$\theta=\frac{A}{\alpha}$, $\ell_{n}=\frac{A}{2d_{\alpha}} n$ and $S_{n}=\ell_{n}^{-\frac 1\alpha} \sum_{i=1}^{n} \xi_{i}$
 and
$$M_{1}(x)\ =\ 0, \ \ \ \ M_{2}(x)\ =\ (B\alpha)(A\beta)^{-1} x^{-(\beta-\alpha)}, \ \ \ \ \ \ \ x \ge1.$$
To use Theorem \ref{t:MThm2}, we need to compute $n \mcl K_\alpha (t,N)$ and $\mcl R_{N,n}$ therein.
By a straightforward calculation, we have
\Bes
\begin{split}
n K_1(t,N) 
& \ =\ \frac{d_{\alpha}}{\alpha-1} \left(\left(|t|\vee \ell_{n}^{-\frac 1\alpha}\right)^{1-\alpha}-N^{1-\alpha}\right)+\frac{Bn\ell_{n}^{-\frac{\beta}{\alpha}}}{2(\beta-1)}  \left(\left(|t|\vee \ell_{n}^{-\frac 1\alpha}\right)^{1-\beta}-N^{1-\beta}\right).
\end{split}
\Ees
By a similar computation as in Example 1, when $\beta \ne 2$,
\ \ \
\Bes
\begin{split}
\int_{-N}^{N} \left|\alpha \mcl K_{\alpha}(t,N)-n K_1(t,N)\right| \dif t & \ = \
\frac{2d_{\alpha} \ell_{n}^{\frac{\alpha-2}{\alpha}}}{2-\alpha}+\frac{2 A d_{\alpha}}{B(\beta-2)} \left(\ell_{n}^{\frac {\alpha-2}\alpha}-\ell_{n}^{\frac{\alpha-\beta}{\alpha}} N^{2-\beta}\right);
\end{split}
\Ees
when $\beta = 2$,
\ \ \
\Bes
\begin{split}
\int_{-N}^{N} \left|\alpha \mcl K_\alpha(t,N)-n K_1(t,N)\right| \dif t & \ = \
\frac{2d_{\alpha} \ell_{n}^{\frac{\alpha-2}{\alpha}}}{2-\alpha}+\frac{2A d_{\alpha}}B \ell_{n}^{\frac{\alpha-2}\alpha}\left[\log N+\frac 1\alpha \log \ell_{n}\right].
\end{split}
\Ees
By \eqref{e:RnNSym}, we immediately obtain
\ \ \
\Be
\mcl R_{N,n} \ = \ D_{\alpha,\gamma}\left(\frac{A}{\alpha-\gamma}+\frac{B}{\beta-\gamma}\right) \ell_{n}^{-\frac{\gamma}{\alpha}}+4 d_{\alpha}\left[\frac{\alpha+1}{\alpha-1}+\frac{B\alpha}{A(\beta-1)} \ell_{n}^{\frac{\alpha-\beta}{\alpha}} N^{\alpha-\beta}\right] N^{1-\alpha}.
\Ee
\vskip 3mm

\noindent Combining the previous relations, we immediately obtain an explicit bound for $d_{W}(\mcl L(S_{n}),\mu)$, which has a leading term and a remainder $\mcl R(n)$, both having explicit values. More precisely, (note $\ell_{n}=\frac{A}{2d_{\alpha}} n$), we have
\vskip 3mm
\noindent (1).  When $\beta>2$, take $N \rightarrow \infty$,
\Bes  
d_{W}(\mcl L(S_{n}), \mu) \ \le \ \frac{2 d_{\alpha} D_{\alpha}}{\alpha} \left(\frac{1}{2-\alpha}+\frac{A}{B(\beta-2)}\right) \ell^{-\frac{2-\alpha}\alpha}_{n}+\mcl R(n)
\Ees
with
$$\mcl R(n) \ = \  D_{\alpha, \gamma} \left(\frac{A}{\alpha-\gamma}+\frac{B}{\beta-\gamma}\right) \ell^{-\frac\gamma \alpha}_{n}.$$
\vskip 2mm

\noindent (2). When $\beta=2$, take $N=\ell_{n}^{q}$ with $q \ge \frac{2-\alpha}{\alpha(\alpha-1)}$ being arbitrary,
\Bes
d_{W}(\mcl L(S_{n}), \mu)  \ \le \ \frac{2 d_{\alpha}D_{\alpha} A(\alpha q+1)}{\alpha^{2} B} \ell_{n}^{-\frac{2-\alpha}\alpha}  \log \ell_{n}+\mcl R(n)
\Ees
with
\Bes
\begin{split}
\mcl R(n)\ =\ \frac{2 d_{\alpha }D_{\alpha}}{(2-\alpha)\alpha}  \ell^{-\frac{2-\alpha}\alpha}_{n}&+D_{\alpha, \gamma} \left(\frac{A}{\alpha-\gamma}+\frac{B}{2-\gamma}\right) \ell^{-\frac\gamma \alpha}_{n}+4 d_{\alpha}\left(\frac{\alpha+1}{\alpha-1} \ell_{n}^{-(\alpha-1) q}
+\frac{\alpha B}A \ell_{n}^{-\frac{2-\alpha}\alpha-q}\right).
\end{split}
\Ees
\vskip 2mm

\noindent (3). When $\alpha<\beta<2$, take $N=\ell_{n}^{q}$ with $q=\frac {\beta-\alpha}{\alpha(\alpha+1-\beta)}$ being arbitrary,
\Bes
\begin{split}
d_{W}(\mcl L(S_{n}), \mu) & \ \le \ 2d_{\alpha} \left[\frac{D_{\alpha}}{(2-\beta)\alpha}+\frac{2(\alpha+1)}{\alpha-1}\right] \ell_{n}^{-\frac {(\beta-\alpha)(\alpha-1)}{\alpha(\alpha+1-\beta)}}+\mcl R(n)
\end{split}
\Ees
with
\ \
\Bes
\begin{split}
\mcl R(n)\ = \ 2 d_{\alpha}\left(\frac{1}{2-\alpha}-\frac{A}{(2-\beta)B}\right) & \ell^{-\frac{2-\alpha}\alpha}_{n}+D_{\alpha, \gamma} \left(\frac{A}{\alpha-\gamma}+\frac{B}{2-\gamma}\right) \ell^{-\frac\gamma \alpha}_{n}+\frac{4 A\alpha d_{\alpha}}{B(\beta-1)} \ell_{n}^{-\frac {2\alpha+2+\alpha \beta-3\beta-\alpha^{2}}{\alpha(\alpha+1-\beta)}} .
\end{split}
\Ees

\vskip 3mm
Note that the case (1) covers the example considered in \cite[Appendix B]{AMPS16}, in which $\beta=1+\alpha$. By a standard argument, the bound in (1) implies
$$d_{{\rm Kol}}(\mcl L(S_n), \mu) \ \le \  C n^{-\frac{2-\alpha}{2\alpha}},$$
which is better than the rate $n^{-\frac 12(1-\frac \alpha 2)}$ in \cite{AMPS16}.
\vskip 3mm

We can consider a more general distribution:
\Be  \label{e:GenExa2}
\begin{split}
& \PP\left(\xi_{1} > x\right) \ = \ \frac{A}{2|x|^{\alpha}}+\frac{B_{1}(x)}{2|x|^{\beta}}, \ \ \ \ \ x>a, \\
& \PP\left(\xi_{1}< x\right) \ = \ \frac{A}{2|x|^{\alpha}}+\frac{B_{2}(x)}{2|x|^{\beta}}, \ \ \ \ \ x<-a, \\
\end{split}
\Ee
where $\alpha \in (1,2)$, $\alpha<\beta$, $a>0$, $B_{1}(x)$ and $B_{2}(x)$ are both continuous functions such that $-L \le B_{1}(x), B_{2}(x) \le L$ for all $x \in \R$ and some constant $L>0$. Take $\theta=A$ and $\ell_{n}=\frac{\theta \alpha}{2d_{\alpha}}n$, we have
$$S_{n} \ \Rightarrow \ \mu \ \ \ \ \ {\rm with} \ \ \ \ S_{n}=\ell_{n}^{-\frac 1\alpha} \sum_{i=1}^{n} (\xi_{i}-\E \xi_{i}).$$
By Theorem \ref{t:MThm2}, we can obtain an explicit bound for $d_{W}(\mcl L(S_{n}),\mu)$ by a similar but much more complicated calculation. Here, we would like to omit the detailed calculation but get the order of the rate. More precisely, by Corollary \ref{c:MThm2} we have

\ \ \ \ \
\Bes
d_W(\mcl L(S_n), \mu) \ \le \ C \left(n^{\frac{\alpha-2}\alpha}+N^{1-\alpha}+N^{2-\beta} n^{1-\frac{\beta}{\alpha}}\right), \ \ \ \ \ \ \beta  \ne 2,
\Ees
\Bes
d_W(\mcl L(S_n), \mu) \ \le \ C  \left(n^{\frac{\alpha-2}\alpha}+N^{1-\alpha}+n^{1-\frac{2}{\alpha}} \log n+n^{1-\frac{2}{\alpha}} \log N\right), \ \ \ \ \ \ \beta=2,
\Ees
where $C$ depends on $\alpha,\beta,a,L,A,B$. Hence,

\noindent (i). When $\beta > 2$, let $N \rightarrow \infty$, we get
$$d_{{\rm W}}(\mcl L(S_n), \mu) \ \le \  C \ n^{-\frac{2-\alpha}\alpha}.$$
(ii). When $\beta = 2$, taking $N=n^{\frac{2-\alpha}{\alpha(\alpha-1)}}$, we get
$$d_{{\rm W}}(\mcl L(S_n), \mu) \ \le \  C \ n^{-\frac{2-\alpha}\alpha} \log n.$$
(iii). When $\alpha< \beta<2$, taking $N=n^{\frac{\beta-\alpha}{\alpha(1+\alpha-\beta)}}$, we have
 $$d_W(\mcl L(S_n), \mu) \ \le\  C \ n^{-\frac{(\alpha-1)(\beta-\alpha)}{\alpha(1+\alpha-\beta)}}.$$
\vskip 3mm

As seen from the results in the previous two examples, the $\gamma$ in the bounds of $d_{W}(\mcl L(S_{n}),\mu)$ may vary from $0$ to $1$, its optimal choice depends on $\alpha$ and $n$. When $n=10^6$, we plot Figure {\bf 1} to demonstrate the optimal $\gamma$ as a function of $\alpha$ for $\alpha=1+j/100$ with $j\in\{1,\cdots,99\}$ for the following four cases: (1). Example 1 (red line);  (2). Example 2 with $\beta=4$ and $A=B=\frac{\alpha \beta}{\alpha+\beta}$ (green line); (3). Example 2 with $A=B$ and $\beta=2$ (blue line); (4). Example 2 with $A=B$ and  $\beta=\alpha+0.1$ (black line).
\begin{figure}\label{beta=4}
\centering
\includegraphics[height=8cm,width=12cm]{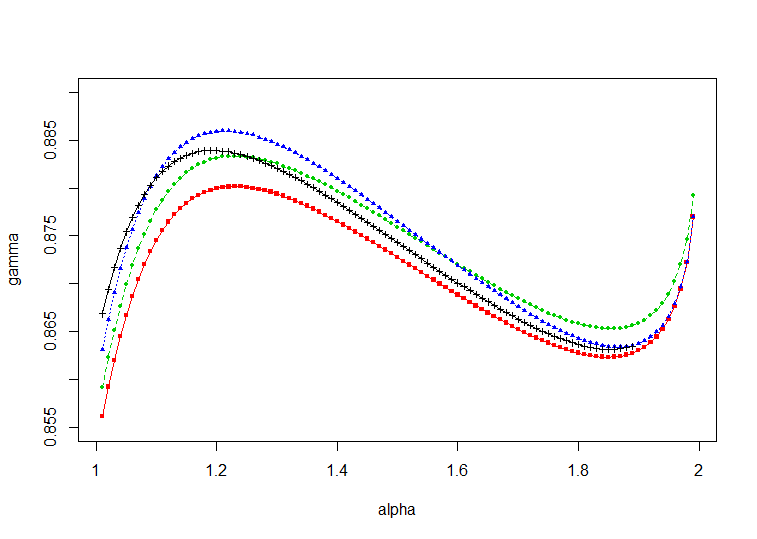}
\caption{the optimal choice of $\gamma$ with $n=10^{6}$}
\end{figure}
\\

\noindent {\bf Example 3: An example of Hall \cite{Hall81}}

Let $Z_1,...,Z_n,...$ be a sequence of i.i.d. random variables such that $Z_1$ has a density function $f(x)$ in the interval $-\e<x<\e$ for some $\e>0$. For further use, we denote $f(x)=a_0+h_1(x)$ for $x \in [0, \e)$ and $f(x)=b_0+h_2(|x|)$ for $x \in (-\e,0]$ where $a_0>0$, $b_0>0$, $h_1, h_2$ are both continuous positive functions from $[0, \e)$ to $\R^+$. Let $X_i={\rm sgn}(Z_i) |Z_i|^{-\frac 1\alpha}$, Hall studied the convergence rate of the following sum
\Be  \label{e:HallSum}
n^{-\frac 1\alpha} \left[\sum_{i=1}^n X_{i}-k_n(\alpha)\right],
\Ee
where $k_n(\alpha)$ is some number and $0<\alpha<2$, he proved
\ \ \ \
\begin{thm} [Theorem 3 of \cite{Hall81}]
Suppose $1<\alpha<2$, $f(x)=a_0+h_{1}(x)$ for $x \in [0,\e)$ and $f(x)=b_0+h_{2}(|x|)$ for $x \in (-\e,0]$ where $|h_1(x)|+|h_{2}(x)| \le b |x|^c$ for some $c>0$ with $\alpha(c+1)<2$, i.e., $0<c<\frac{2-\alpha}{\alpha}$, then $n^{-\frac 1\alpha}\left(\sum_{i=1}^n X_i-n \E X_1\right)$ weakly converges to a stable distribution with the distribution function $A(x)$. Moreover,
\Be
\sup_{x \in \R} \left|\PP \left[n^{-\frac 1\alpha}\left(\sum_{i=1}^n X_i-n \E X_1\right) \le x \right]-A(x)\right|=O(n^{-c}).
\Ee
\end{thm}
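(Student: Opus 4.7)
The plan is to invoke Corollary \ref{c:MThm2} applied to the i.i.d.\ variables $X_i = \operatorname{sgn}(Z_i)|Z_i|^{-1/\alpha}$, and then pass from the resulting Wasserstein-$1$ bound to the Kolmogorov bound claimed by Hall. The first step is to translate the hypothesis on $f$ into the tail conditions $(i')$ and $(ii')$ of Section \ref{s:MThm}. Since $\{|X_1|>x\}=\{|Z_1|<x^{-\alpha}\}$, for $x$ large enough that $x^{-\alpha}<\e$,
\Bes
\PP(X_1>x)=a_0 x^{-\alpha}+\int_0^{x^{-\alpha}}h_1(z)\,\dif z,\qquad \PP(X_1<-x)=b_0 x^{-\alpha}+\int_0^{x^{-\alpha}}h_2(z)\,\dif z,
\Ees
and the bound $|h_i(z)|\le b z^c$ forces each correction to be $O(x^{-\alpha(c+1)})$. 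In the symmetric case $a_0=b_0$, to which Corollary \ref{c:MThm2} applies, this yields $(i')$ with $M_1\equiv 0$ eventually and $(ii')$ with $\theta=2a_0$ and $|M_2(x)|\le C x^{-\alpha c}$; the asymmetric case lies outside its scope and would require a separate extension of Theorem \ref{t:MThm2} to non-symmetric stable limits.

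Next I substitute into \eqref{e:rt}: with $M_1\equiv 0$ and $b_t=\ell_n^{1/\alpha}t+\E X_1$, I obtain the pointwise estimate $|r_t|\le C(\ell_n^{1/\alpha}t)^{1-\alpha(c+1)}$. Hall's hypothesis $\alpha(c+1)<2$ makes $|r_t|$ non-integrable at infinity, so one cannot send $N\to\infty$ in Corollary \ref{c:MThm2}; instead, one optimizes over $N$. Substituting the estimates and using $\ell_n\asymp n$ yields, up to constants,
\Bes
d_W\bigl(\mcl L(S_n),\mu\bigr)\ \le\ C\Bigl(n^{-\frac{2-\alpha}{\alpha}}+N^{1-\alpha}+n^{-c}N^{2-\alpha(c+1)}\Bigr).
\Ees
Balancing the last two terms gives the optimal $N=n^{c/(1-\alpha c)}$ and a $W_1$ rate of order $n^{-c(\alpha-1)/(1-\alpha c)}$, which dominates $n^{-(2-\alpha)/\alpha}$ exactly in Hall's range $c<(2-\alpha)/\alpha$.

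The final step is to convert the $W_1$ bound into a Kolmogorov bound via the standard smoothing inequality $d_{{\rm Kol}}(\nu,\mu)\le C\sqrt{d_W(\nu,\mu)\,\|p_\mu\|_\infty}$, where $p_\mu$ is the bounded density of the $\alpha$-stable law $\mu$; this yields a Kolmogorov rate of order $n^{-c(\alpha-1)/(2(1-\alpha c))}$. The principal obstacle is that this rate is weaker than Hall's direct Fourier-analytic rate $n^{-c}$ for small $c$, reflecting the intrinsic $\sqrt{\,\cdot\,}$ loss in the $W_1\!\to\!d_{{\rm Kol}}$ conversion; matching Hall's exact exponent would seem to require either a direct Stein analysis in the Kolmogorov metric, or a refined estimate of $r_t$ that exploits the structure of $f$ beyond the crude bound $|h_i(z)|\le b z^c$.
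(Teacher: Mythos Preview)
The paper does not prove this theorem: it is quoted verbatim as ``Theorem 3 of \cite{Hall81}'' and serves only as a benchmark against which the paper compares its own $W_1$ results (Theorem \ref{t:Hall} and the unnumbered theorem following it). There is therefore no ``paper's own proof'' to compare against.

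Your proposal is not a proof of the stated theorem, and you correctly identify both reasons why. First, Corollary \ref{c:MThm2} (and Theorem \ref{t:MThm2} on which it rests) is formulated only for the symmetric case $a_0=b_0$, whereas Hall's theorem allows $a_0\ne b_0$; this is not a minor technicality, since the limiting stable law is then asymmetric and the entire Stein setup of the paper (the operator $\Delta^{\alpha/2}-\frac{1}{\alpha}x\partial_x$, the kernel $\mcl K_\alpha$, the density estimates of Lemma \ref{l:HKEst}) would need to be redone. Second, even in the symmetric case the $W_1$ rate you obtain, $n^{-c(\alpha-1)/(1-\alpha c)}$, after the standard $\sqrt{\,\cdot\,}$ conversion gives a Kolmogorov rate strictly worse than Hall's $n^{-c}$. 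The paper itself makes no claim to recover Hall's Kolmogorov exponent by its methods; its point in Example 3 is rather that when $c\ge (2-\alpha)/\alpha$ (outside Hall's hypothesis $\alpha(c+1)<2$) the paper's $W_1$ rate $n^{-(2-\alpha)/\alpha}$ is better than what Hall's theorem would give.

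In short: what you have written is a correct derivation of a weaker, symmetric-only bound via the paper's machinery, together with an honest acknowledgment that it falls short of the stated conclusion. That is a useful remark, but it is not a proof of Hall's theorem, and the paper does not supply one either.
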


As an application of the case \eqref{e:SpEx2} above, we can study a special case of \eqref{e:HallSum} and give an explicit bound of the convergence in $W_{1}$ distance. More precisely, let $\alpha \in (1,2)$, we assume $a_0=b_0=a$ and $h_{1}(x)=h_{2}(x)=b x^{c}$ for $[0,1]$, by a straightforward calculation,
\ \ \
\Bes
\begin{split}
& \PP \left(X_1>x\right) \ = \ \frac {2a} {2x^{\alpha}}+\frac{\frac{2b}{c+1}}{2 x^{\alpha(c+1)}}, \ \ \ \ \ \ \ \ x>1; \\
&\PP \left(X_1<x\right) \ = \ \frac{2a}{2|x|^{\alpha}}+\frac{\frac{2b}{c+1}}{2 |x|^{\alpha(c+1)}}, \ \ \ \ \ \ x<-1.
\end{split}
\Ees
We have the following theorem about explicit bound of convergence rate in $W_{1}$ distance.
\begin{thm} \label{t:Hall}
Let the above assumptions hold.
Take $A=2a$, $B=\frac{2b}{c+1}$, $\beta=\alpha(c+1)$, $\ell_{n}=\frac{A \alpha}{2d_{\alpha}} n$ and $S_{n}=\ell_{n}^{-\frac 1\alpha} \sum_{i=1}^{n} \xi_{i}$, we have
$S_{n} \Rightarrow \mu$ with $\mu$ a stable distribution having characteristic function $e^{-|\lambda|^{\alpha}}$. Moreover, $d_{W}(\mcl L(S_{n}),\mu)$ has an \emph{explicit} bound the same as the cases (1)-(3) in Example 2. More precisely,
\begin{itemize}
\item If $\beta>2$, i.e., $c>\frac{2-\alpha}{\alpha}$, the bound of the case (1) holds with leading order $n^{-\frac{2-\alpha}{\alpha}}$.
\item If $\beta=2$, i.e., $c=\frac{2-\alpha}{\alpha}$, the bound of the case (2) holds with leading order $n^{-\frac{2-\alpha}{\alpha}} \log n$.
\item If $\alpha<\beta<2$, i.e., $c \in (0, \frac{2-\alpha}{\alpha})$, the bound of the case (3) holds with leading order $n^{-\frac{(\alpha-1)c}{1-\alpha c}}$.
\end{itemize}
\end{thm}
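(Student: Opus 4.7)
The plan is to reduce Theorem \ref{t:Hall} to Example 2 by a direct change-of-variables computation of the tail of $X_{1}$; once the tail is seen to exactly fit the generalized Pareto form \eqref{e:GenExa2} with constant functions $B_{1}, B_{2}$, the explicit bounds of the three cases (1)--(3) of Example 2 apply under the identification $\beta = \alpha(c+1)$.

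First I would compute $\PP(X_{1} > x)$ for $x > 1$. Since $X_{1} = \mathrm{sgn}(Z_{1}) |Z_{1}|^{-1/\alpha}$ and the density $f(z) = a + b z^{c}$ is prescribed on $[0,1]$ (and symmetric by $a_{0} = b_{0}$, $h_{1} = h_{2}$), the event $\{X_{1} > x\}$ for $x > 1$ is exactly $\{0 < Z_{1} < x^{-\alpha}\}$. Because $x^{-\alpha} < 1$, the integration uses only the prescribed form of $f$:
\Bes
\PP(X_{1} > x) \ = \ \int_{0}^{x^{-\alpha}} (a + b z^{c}) \dif z \ = \ \frac{a}{x^{\alpha}} + \frac{b}{(c+1)\, x^{\alpha(c+1)}},
\Ees
and the analogous formula holds for $\PP(X_{1} < -x)$ by symmetry. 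Comparing with \eqref{e:GenExa2}, this is precisely the tail structure with $A = 2a$, $B_{1}(x) = B_{2}(x) \equiv 2b/(c+1)$ (bounded continuous constants), and $\beta = \alpha(c+1)$; the hypothesis $c > 0$ delivers $\alpha < \beta$ as required.

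With this parameter identification, the weak convergence $\mcl L(S_{n}) \Rightarrow \mu$ is immediate from Theorem \ref{t:MThm2} with $\theta = A$ and $\ell_{n} = A\alpha n/(2 d_{\alpha})$. For the explicit bounds, the derivation of cases (1)--(3) in Example 2 uses only the two-term structure of the tail, so the closed forms for $n K_{1}(t,N)$, for $\int_{-N}^{N}|\alpha \mcl K_{\alpha}(t,N) - n K_{1}(t,N)|\dif t$, and for $\mcl R_{N,n}$ transfer verbatim with the above values of $A, B, \beta$ (modulo the standard conversion between density and tail coefficients that was already used in passing from \eqref{e:SpEx2} to \eqref{e:GenExa2}). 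The trichotomy of the theorem corresponds to $\beta > 2$, $\beta = 2$, $\alpha < \beta < 2$, i.e.\ to $c$ being greater than, equal to, or less than $(2-\alpha)/\alpha$; substituting $\beta = \alpha(c+1)$ into the case (3) exponent $(\alpha-1)(\beta-\alpha)/[\alpha(1+\alpha-\beta)]$ yields exactly $(\alpha-1)c/(1-\alpha c)$, as claimed. The only non-trivial bookkeeping is checking that no step in Example 2 invokes the density \eqref{e:SpEx2} beyond the shape of its two-term tail, so that the bounds apply equally to a non-Pareto distribution whose tails happen to match that form; this is why the theorem emerges as a direct corollary of Example 2 rather than requiring new estimates.
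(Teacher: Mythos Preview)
Your proposal is correct and follows essentially the same approach as the paper: the paper's argument consists of the tail computation displayed just before the theorem statement, recognizing that $\PP(X_1>x)$ has exactly the two-term Pareto form of Example~2, and then invoking the bounds of cases (1)--(3) verbatim. Your write-up supplies a bit more detail (the integral for $\PP(X_1>x)$ and the check that the case~(3) exponent becomes $(\alpha-1)c/(1-\alpha c)$), but the route is identical; just be careful that the constants $A,B$ in Theorem~\ref{t:Hall} are tail coefficients in the convention of \eqref{e:GenExa2}, whereas the explicit bounds in cases (1)--(3) of Example~2 were written with the density coefficients of \eqref{e:SpEx2}, so the ``verbatim transfer'' requires the conversion you allude to.
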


We can give the exact values of coefficients in Theorem \ref{t:Hall} and those of $d_W\left(\mcl L(S_n), \mu\right)$, the results are the same as those in Example 2, thus we omit them.
Similar as the case \eqref{e:GenExa2} in the previous example, we can apply Theorem \ref{t:MThm2} to more general distributions and get an explicit bound for the corresponding $d_{W}(\mcl L(S_{n}),\mu)$ by much more complicated calculations. Here we only use Corollary \ref{c:MThm2} to give the order of the convergence rate as the following.
\ \ \ \ \ \
\begin{thm}
Suppose $1<\alpha<2$, $a_{0}=b_{0}=a$ and $|h_1(x)|+|h_{2}(x)| \le b |x|^c$ for $x \in [0, \e)$ with $b>0,  \e>0, c>0$, let $\ell_{n}=\frac{a \alpha}{d_{\alpha}}n$ then $S_n:=\ell_{n}^{-\frac 1\alpha}(\sum_{i=1}^n X_i-n \E X_1)$ weakly converges to a stable distribution $\mu$ with characteristic function $e^{-|\lambda|^{\alpha}}$. Moreover,
\Be
d_W\left(\mcl L(S_n), \mu\right)\ \le \ C \begin{cases}
n^{-\frac{2-\alpha}{\alpha}},   \ \ \ & c > \frac{2-\alpha}{\alpha} \\
n^{-\frac{2-\alpha}{\alpha}} \log n, \ \ \ &c=\frac{2-\alpha}{\alpha}; \\
n^{-\frac{(\alpha-1)c}{1-\alpha c}}, \ \ \ & 0<c<\frac{2-\alpha}{\alpha}.
\end{cases}
\Ee
\end{thm}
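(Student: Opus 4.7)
The plan is to reduce to the generalized setting of Example 2 (see \eqref{e:GenExa2}) and then invoke Corollary \ref{c:MThm2}. Under the assumption $a_0 = b_0 = a$, for any $x > \e^{-1/\alpha}$ one has
\[
\PP(X_1 > x) = \PP(0 < Z_1 < x^{-\alpha}) = \frac{a}{x^\alpha} + \int_0^{x^{-\alpha}} h_1(y)\, \dif y, \qquad \PP(X_1 < -x) = \frac{a}{x^\alpha} + \int_0^{x^{-\alpha}} h_2(y)\, \dif y,
\]
and the bound $|h_i(y)| \le b y^c$ controls each remainder integral by $\frac{b}{c+1} x^{-\alpha(c+1)}$. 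Setting $A = 2a$, $\beta = \alpha(c+1)$, and
\[
B_i(x) := 2|x|^\beta \int_0^{x^{-\alpha}} h_i(y)\, \dif y \qquad (i = 1, 2),
\]
one obtains continuous functions uniformly bounded by $\frac{2b}{c+1}$, so $X_1$ satisfies \eqref{e:GenExa2} with parameters $(A,\beta,B_1,B_2)$. The corresponding scaling $\ell_n = \frac{A\alpha}{2 d_\alpha} n = \frac{a\alpha}{d_\alpha} n$ matches the normalization in the statement, with $n\E X_1$ playing the role of the centering $n \E \xi_1$.

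Applying Theorem \ref{t:MThm2} yields weak convergence $S_n \Rightarrow \mu$ together with a quantitative $W_1$-bound expressed via $r_t$. With $M_1 \equiv 0$ and $|M_2(x)| \le C x^{\alpha-\beta}$, substituting $b_t \sim \ell_n^{1/\alpha} t$ into \eqref{e:rt} gives $|r_t| \le C \ell_n^{(1-\beta)/\alpha} t^{1-\beta}$ for large $t$. Computing the boundary term $n^{1-1/\alpha} N |r_N|$ and the integral $n^{1-1/\alpha} \int_{c \ell_n^{-1/\alpha}}^N |r_t| \, \dif t$ and inserting these into Corollary \ref{c:MThm2}, the bound reduces to
\[
d_W(\mcl L(S_n), \mu) \ \le \ C\bigl( n^{-(2-\alpha)/\alpha} + N^{1-\alpha} + n^{(\alpha-\beta)/\alpha} N^{2-\beta} \bigr),
\]
with the $N^{2-\beta}$ factor replaced by an extra logarithm $(\log N + \log n)$ in the critical case $\beta = 2$.

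It remains to optimize $N$ in each regime. When $c > (2-\alpha)/\alpha$ (equivalently $\beta > 2$), let $N \to \infty$ so that the $N$-dependent pieces vanish and the leading rate is $n^{-(2-\alpha)/\alpha}$. When $c = (2-\alpha)/\alpha$ (equivalently $\beta = 2$), take $N = n^q$ with any $q \ge (2-\alpha)/(\alpha(\alpha-1))$ so that $N^{1-\alpha} \le n^{-(2-\alpha)/\alpha}$, while the logarithmic factor produces the claimed $n^{-(2-\alpha)/\alpha} \log n$. When $0 < c < (2-\alpha)/\alpha$ (equivalently $\alpha < \beta < 2$), balance $N^{1-\alpha}$ against $n^{(\alpha-\beta)/\alpha} N^{2-\beta}$ by choosing $N = n^{(\beta-\alpha)/(\alpha(1+\alpha-\beta))}$; a short algebraic manipulation using $\beta - \alpha = \alpha c$ and $1 + \alpha - \beta = 1 - \alpha c$ yields $N^{1-\alpha} = n^{-(\alpha-1)c/(1-\alpha c)}$, which dominates $n^{-(2-\alpha)/\alpha}$ in this regime and is exactly the claimed rate. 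The main technical obstacle is the careful book-keeping of all subleading terms, in particular verifying that the contribution to the $r_t$-integral from $t$ near the lower cutoff $\ell_n^{-1/\alpha}$ does not exceed the leading order; this is, however, essentially the same computation already carried out for \eqref{e:GenExa2} in Example 2, and it transfers here because the exact-power identities are needed only through the uniform boundedness of $B_1$ and $B_2$, which is guaranteed by $|h_i(y)| \le b y^c$.
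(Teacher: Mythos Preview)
Your proposal is correct and follows essentially the same route as the paper: you translate the tail of $X_1$ into the form \eqref{e:GenExa2} with $A=2a$, $\beta=\alpha(c+1)$ and bounded $B_1,B_2$, then invoke Corollary \ref{c:MThm2} and optimize over $N$ exactly as in Example~2. The paper does not spell out these steps beyond the reference to \eqref{e:GenExa2}, so your write-up in fact fills in the details that the paper leaves implicit.
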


\ \ \ \

\section{Proof of Theorem \ref{t:MainThm}: Stein's method}  \label{s:MThmProof}

\subsection{Stein's equation and its regularity estimates}
Let us first recall the definition of fractional Laplacian $\Delta^{\frac \alpha 2}$ with $\alpha \in (0,2)$, see for instance \cite{Kw17}.
Let $f:\R \rightarrow \R$ be a measurable function, for any $x \in \R$, $\Delta^{\frac \alpha 2} f(x)$ is defined by
\begin{equation}  \label{e:FraLap0}
\Delta^{\frac \alpha 2} f(x)\ = \ d_\alpha \left({\rm p.v.} \int_\R \frac{f(x+y)-f(x)}{|y|^{1+\alpha}} \dif y\right),
\end{equation}
provided the principal value ${\rm p.v.} \int_\R \frac{f(x+y)-f(x)}{|y|^{1+\alpha}} \dif y$ exists, where $d_\alpha\ =\ \left(\int_{-\infty}^\infty \frac{1-\cos y}{|y|^{1+\alpha}} \dif y\right)^{-1}$ and
$${\rm p.v.} \int_\R \frac{f(x+y)-f(x)}{|y|^{1+\alpha}} \dif y \ = \ \lim_{r\downarrow 0} \int_{\R \setminus (-r,r)} \frac{f(x+y)-f(x)}{|y|^{1+\alpha}} \dif y.$$
The definition \eqref{e:FraLap0} with a principle value is not convenient for use, if some suitable regularity of $f$ is further assumed, $\Delta^{\alpha/2} f(x)$ can be rewritten in a form without limit.

When $\alpha > 1$, if $f'$ and $f''$ are both bounded, then $\Delta^{\alpha/2} f(x)$ is well defined for all $x \in \R$ and can be rewritten as
\begin{equation} \label{e:FraLap}
\Delta^{\frac \alpha 2} f(x)\ =\ d_\alpha \int_\R \frac{f(x+y)-f(x)-y f'(x)}{|y|^{1+\alpha}} \dif y.
\end{equation}
Indeed, using Taylor's expansions, we easily see that
\ \ \ \
\begin{equation}
\begin{split}
\left|\int_\R \frac{f(x+y)-f(x)-y f'(x)}{|y|^{1+\alpha}} \dif y\right| & \ \le \  \left|\int_{|y| \le 1} \frac{\frac 12 f''(x+\theta_1 y) y^2}{|y|^{1+\alpha}} \dif y\right| \\
& \ \ \ \ \ \ +\left|\int_{|y| > 1} \frac{ f'(x+\theta_2 y) y-f'(x) y}{|y|^{1+\alpha}} \dif y\right| \ < \ \infty,
\end{split}
\end{equation}
where $\theta_1, \theta_2 \in (0,1)$. On the other hand,
\ \ \
\begin{equation}
\begin{split}
\int_\R \frac{f(x+y)-f(x)-y f'(x)}{|y|^{1+\alpha}} \dif y &\ = \ \lim_{r \downarrow 0} \int_{\R \setminus (-r,r)} \frac{f(x+y)-f(x)-y f'(x)}{|y|^{1+\alpha}} \dif y \\
&\ = \ \lim_{r \downarrow 0} \int_{\R \setminus (-r,r)} \frac{f(x+y)-f(x)}{|y|^{1+\alpha}} \dif y.
\end{split}
\end{equation}
In our paper, thanks to that the solution $f$ of Stein's equation has bounded first and second order derivatives, we will use the form \eqref{e:FraLap} to avoid the limit in \eqref{e:FraLap0}. Moreover, $\Delta^{\frac\alpha2} f(x)$ can be rewritten as \eqref{e:FracF-0} and \eqref{e:FracF-1} below, these two new formulations will play an important role in our analysis.
\vskip 2mm

It is well known that $\Delta^{\alpha/2}$ is the infinitesimal generator of the standard 1d symmetric $\alpha$-stable process $(Z_{t})_{t \ge 0}$ \cite{ARW00} with $Z_{0}=0$, the distribution of $Z_t$ has a density $p(t,x)$ satisfying
\ \ \
\Be  \label{e:FT}
\int_{-\infty}^\infty e^{i \lambda x} p(t,x) \dif x\ =\ e^{-t|\lambda|^\alpha},
\Ee
it is well known that $p(t,x)$ is uniquely determined by its characteristic function \cite[Section 3.3.1]{Dur10}. Note that $p(t,x)$ is called Green's function of symmetric process and satisfies the differential equation:
\ \ \
\Be  \label{e:ptEq}
\p_t p(t,x)\ =\ \Delta^{\alpha/2} p(t,x), \ \ \ \ \ p(0,x)\ =\ \delta_0(x),
\Ee
where $\delta_0(x)$ is Dirac function at $0$, i.e., $\delta_0(x)=0$ for all $x \ne 0$ and $\int_{-\infty}^\infty \delta_0(x) \dif x=1$, see \cite[(1.8)]{Kolok00b} with $A \equiv 0$ and $d=1$ therein.

Let us now consider the Orenstein-Uhlenbeck $\alpha$-stable process as the following
\ \ \
\Be \label{e:OU}
\dif X_t\ =\ -\frac{1}{\alpha   } X_t \dif t+\dif Z_t, \ \ \ \ \ X_0\ =\ x,
\Ee
we denote by $X_t(x)$ the solution to the SDE \eqref{e:OU}.
Its infinitesimal generator is
$$\mcl A f(x)\ =\ \Delta^{\frac \alpha 2} f(x)-\frac{1}{\alpha   } x f'(x) \ \ \ \ \ \ \forall \ f \in \mcl S(\R,\R),$$
where $\mcl S(\R,\R)$ is the Schwartz function space, the set of all smooth functions whose derivatives are rapidly decreasing \cite{StSh03}.
The domain $\mcl D(\mcl A)$ of the operator $\mcl A$ is the closure of $\mcl S(\R,\R)$ by a standard procedure depending on the underlying function space that we consider \cite[Chapter 2]{Part04}.

The following characterization theorem of stable distribution is well known, see \cite[Proposition 3.2]{ARW00} for instance.
\begin{thm}
Let $Y$ be a random variable. If the following equation holds:
\Be \label{de:StId}
\begin{split}
 \E \left[\Delta^{\frac \alpha2} f(Y)\right]-\frac{1}{\alpha}\E \left[Y f'(Y)\right]\ =\ 0 \ \ \ \ \ \ \forall \ \ f \in \mcl S(\R,\R),
\end{split}
\Ee
where $\alpha \in (0,2)$,
then $Y$ has a symmetric $\alpha$-stable distribution $\mu$ with
the characteristic function $e^{-  |\lambda|^\alpha}$. Moreover, the distribution of $Y$ is uniquely determined by \eqref{de:StId}.
\end{thm}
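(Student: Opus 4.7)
The plan is to reduce the functional identity~\eqref{de:StId} to a first-order ODE for the characteristic function $\phi_Y(\lambda):=\E\bigl[e^{i\lambda Y}\bigr]$ by inserting a family of Schwartz test functions that approximates $x\mapsto e^{i\lambda x}$. The key preliminary computation is the Fourier symbol $\Delta^{\alpha/2}e^{i\lambda x}=-|\lambda|^\alpha e^{i\lambda x}$: plugging $e^{i\lambda x}$ into~\eqref{e:FraLap0}, splitting into real and imaginary parts and applying the change of variables $u=|\lambda|y$ reduces the integral to $\cos(\lambda x)\int(\cos u-1)|u|^{-1-\alpha}\,du$ (the sine part vanishes by symmetry), and the definition of $d_\alpha$ makes this equal to $-|\lambda|^\alpha\cos(\lambda x)/d_\alpha$. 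Since $e^{i\lambda x}\notin\mathcal{S}(\R,\R)$, I would fix a smooth bump $\eta$ with $\eta\equiv 1$ on $[-1,1]$ and compact support in $[-2,2]$, and work with the Schwartz functions $f_{\lambda,R}(x):=\eta(x/R)\cos(\lambda x)$ and $g_{\lambda,R}(x):=\eta(x/R)\sin(\lambda x)$.

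The main step is then two limit passages as $R\to\infty$. First, using representation~\eqref{e:FraLap} together with the Taylor estimate $|f_{\lambda,R}(x+y)-f_{\lambda,R}(x)-yf_{\lambda,R}'(x)|\le C(\lambda)(y^2\wedge 1)+C(\lambda)|y|\mathbf{1}_{|y|>1}$, with $C(\lambda)$ independent of $R\ge 1$, dominated convergence produces an integrable majorant for the kernel in~\eqref{e:FraLap}. The pointwise limit of the integrand coincides with that attached to $\cos(\lambda\cdot)$, so $\Delta^{\alpha/2}f_{\lambda,R}(x)\to-|\lambda|^\alpha\cos(\lambda x)$ (and analogously for $g_{\lambda,R}$) uniformly in $x$. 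Applying dominated convergence once more, now inside the expectation, yields $\E\bigl[\Delta^{\alpha/2}(f_{\lambda,R}+ig_{\lambda,R})(Y)\bigr]\to -|\lambda|^\alpha\phi_Y(\lambda)$.

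Second, for the drift term expand $xf_{\lambda,R}'(x)=-\lambda x\sin(\lambda x)\eta(x/R)+(x/R)\eta'(x/R)\cos(\lambda x)$; the second summand is bounded by $\|t\eta'(t)\|_\infty$ and tends to $0$ pointwise because $\eta'$ vanishes near the origin, so it drops out in the limit by dominated convergence. For the first summand, the cutoff makes $Y\sin(\lambda Y)\eta(Y/R)$ bounded, which legitimizes
\[
\E\bigl[Y\sin(\lambda Y)\eta(Y/R)\bigr] \;=\; -\partial_\lambda\E\bigl[\cos(\lambda Y)\eta(Y/R)\bigr];
\]
uniform-in-$R$ control of this derivative on compact subsets of $\R\setminus\{0\}$ (again from Schwartz-norm estimates on $\eta$) lets one exchange the $\lambda$-derivative with the $R\to\infty$ limit and obtain $\tfrac1\alpha\E\bigl[Y(f_{\lambda,R}+ig_{\lambda,R})'(Y)\bigr]\to\tfrac{\lambda}{\alpha}\phi_Y'(\lambda)$ for $\lambda\neq0$. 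Combining the two limits, identity~\eqref{de:StId} becomes the separable ODE $\phi_Y'(\lambda)=-\alpha|\lambda|^{\alpha-1}\operatorname{sgn}(\lambda)\phi_Y(\lambda)$ on $\R\setminus\{0\}$; its unique solution with $\phi_Y(0)=1$ and continuity at $0$ is $\phi_Y(\lambda)=e^{-|\lambda|^\alpha}$, and injectivity of the Fourier transform on probability measures then pins down the law of $Y$, giving both the representation and uniqueness.

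I expect the main obstacle to be justifying the interchange of the $R\to\infty$ limit with the $\lambda$-differentiation in the drift step when $Y$ has no finite first moment (which fails for $\alpha\le1$). The resolution is to keep the compactly supported cutoff $\eta(Y/R)$ \emph{inside} the expectation and to differentiate in $\lambda$ \emph{before} sending $R\to\infty$, so that the uniform bounds needed come only from $\|\eta\|_\infty,\|\eta'\|_\infty,\|t\eta'(t)\|_\infty<\infty$ and from $|\cos(\lambda y)|,|\sin(\lambda y)|\le1$; no integrability of $Y$ is ever invoked. An alternative, more abstract route matching the cited \cite[Proposition 3.2]{ARW00} is to read~\eqref{de:StId} as saying that the law of $Y$ is invariant under the semigroup of the $\alpha$-stable Ornstein--Uhlenbeck process~\eqref{e:OU}, whose explicit transition characteristic function $e^{i\xi e^{-t/\alpha}x}\exp\bigl(-|\xi|^\alpha(1-e^{-t})\bigr)$ identifies $\mu$ as its unique invariant measure.
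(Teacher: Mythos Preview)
Your argument is correct and self-contained, whereas the paper's own proof is a one-line citation of \cite[Proposition~3.2]{ARW00}: it observes that \eqref{de:StId} says precisely that the law of $Y$ is invariant for the generator $\mcl A$, and then invokes the cited result to identify $\mu$ as the unique invariant measure. Your direct route via the characteristic-function ODE recovers the same conclusion without the external reference, and your final paragraph already names this alternative. Two small technical points are worth flagging. First, the representation \eqref{e:FraLap} with the full linear correction $-yf'(x)$ is only integrable at infinity for $\alpha>1$; for $\alpha\in(0,1]$ you should switch to the truncated correction $-yf'(x)\mathbf 1_{\{|y|\le 1\}}$ (or, for $\alpha<1$, drop the correction entirely), after which your dominated-convergence majorant becomes $C(\lambda)(y^2\wedge 1)/|y|^{1+\alpha}$ and the rest goes through unchanged. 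Second, the claimed ``uniform in $x$'' convergence of $\Delta^{\alpha/2}f_{\lambda,R}$ is stronger than needed and not obviously true; pointwise convergence together with the uniform bound $\sup_{x,R}|\Delta^{\alpha/2}f_{\lambda,R}(x)|\le M(\lambda)<\infty$ already suffices for the outer dominated convergence. Your key insight---that the Stein identity itself yields the $R$-uniform bound on $\psi_R'(\lambda)=\E[iYe^{i\lambda Y}\eta(Y/R)]$ needed to pass the $\lambda$-derivative through the limit, so that no moment assumption on $Y$ is required---is exactly right and is what makes the argument work for all $\alpha\in(0,2)$.
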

\begin{proof}
By \cite[Proposition 3.2]{ARW00} with $a_{1}=0$ and $a_{2}=\alpha d_{\alpha}$ in (3.6) therein (note that the linear operator in \cite{ARW00} is $L=\alpha \mcl A$), we get $\mu$ is the unique invariant measure of $\mcl A$ in the sense that
$$\int_{\R} \mcl A f(x) \mu(\dif x)=0.$$
See \cite[Definition 3.1]{ARW00}. This means that $Y$ has a distribution $\mu$ and this distribution is uniquely determined.
\end{proof}

For any Lipschitz function $h:\R \rightarrow \R$, Stein's equation is
\
\Be \label{e:StEq}
\Delta^{\frac \alpha 2} f(x)-\frac{1}{\alpha   } x f'(x)\ =\ h(x)-\mu(h),
\Ee
i.e.,
 \Be  \label{e:StEq-1}
 \mcl A f(x)\ =\ h(x)-\mu(h).
\Ee
It is also known that Eq. \eqref{e:StEq-1} is called Poisson equation, we can represent its solution by the stochastic process generated by $\mcl A$. More precisely,
\ \ \ \ \
 \begin{lem}  \label{l:FRep}
 	Eq. \eqref{e:StEq} has a solution
 	\Be  \label{e:FRep}
 	\begin{split}
 		f(x)& \ = \ -\int_0^\infty \int_{-\infty}^\infty p\left(1-e^{-t}, y-e^{-\frac t{  \alpha}} x\right) (h(y)-\mu(h)) \dif y \dif t,
 	\end{split}
 	\Ee
 	where $p(.,.)$ is determined by its characteristic function \eqref{e:FT}.
 \end{lem}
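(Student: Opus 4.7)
The plan is to realize the proposed $f$ as the (negative) time integral of the semigroup generated by $\mcl A$ applied to $h-\mu(h)$, and then verify the Stein/Poisson equation by differentiating under the integral sign.

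First I would solve the Ornstein--Uhlenbeck SDE \eqref{e:OU} explicitly by variation of constants:
\[
X_t(x)\ =\ e^{-t/\alpha}\,x\ +\ \int_0^t e^{-(t-s)/\alpha}\,\dif Z_s.
\]
The stochastic integral $Y_t:=\int_0^t e^{-(t-s)/\alpha}\dif Z_s$ is symmetric $\alpha$-stable, and by the scaling/self-similarity of $(Z_t)$ together with the independence of increments one computes
\[
\E\bigl[e^{\im\lambda Y_t}\bigr]\ =\ \exp\!\Bigl(-|\lambda|^\alpha\!\int_0^t e^{-(t-s)}\,\dif s\Bigr)\ =\ \exp\!\bigl(-(1-e^{-t})|\lambda|^\alpha\bigr),
\]
so by \eqref{e:FT} the law of $Y_t$ admits the density $p(1-e^{-t},\cdot)$. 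Hence $X_t(x)$ has density $y\mapsto p(1-e^{-t},y-e^{-t/\alpha}x)$, and the associated Markov semigroup is
\[
P_t h(x)\ =\ \E\,h(X_t(x))\ =\ \int_{-\infty}^{\infty} p(1-e^{-t},y-e^{-t/\alpha}x)\,h(y)\,\dif y.
\]
In particular, letting $t\to\infty$ and using \eqref{e:FT} shows $P_t h(x)\to\mu(h)$, with $\mu$ the invariant measure with characteristic function $e^{-|\lambda|^\alpha}$. With this notation, the candidate solution can be rewritten compactly as
\[
f(x)\ =\ -\int_0^\infty \bigl(P_t h(x)-\mu(h)\bigr)\,\dif t.
\]

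Next I would check that this integral converges (so that $f$ is well defined) and may be differentiated in $x$. Since $h$ is Lipschitz, $|P_t h(x)-\mu(h)|$ can be controlled by the coupling
\[
|P_t h(x)-\mu(h)|\ \le\ \mathrm{Lip}(h)\,\E|X_t(x)-X_t(\infty)|\ =\ \mathrm{Lip}(h)\,e^{-t/\alpha}|x-x'|
\]
for any $x'$ distributed as $\mu$ coupled through the same driving noise, which gives exponential decay in $t$ and justifies $f\in C^1$ by dominated convergence.

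The key computation is then the Kolmogorov/backward identity
\[
\frac{\dif}{\dif t}P_t h(x)\ =\ \mcl A\, P_t h(x),
\]
which, upon integrating the identity $f(x)=-\int_0^\infty (P_t h(x)-\mu(h))\dif t$ against $\mcl A$ and interchanging, yields
\[
\mcl A f(x)\ =\ -\int_0^\infty \mcl A P_t h(x)\,\dif t\ =\ -\int_0^\infty \frac{\dif}{\dif t} P_t h(x)\,\dif t\ =\ h(x)-\mu(h),
\]
which is exactly \eqref{e:StEq-1}, equivalently \eqref{e:StEq}.

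The main obstacle is the last step: interchanging $\mcl A$ (a non-local, singular integro-differential operator) with the time integral, and justifying $P_t h\in\mcl D(\mcl A)$ at merely Lipschitz $h$. I would handle this by the standard smoothing route: use the explicit convolution form of $P_t$ with the stable density $p(1-e^{-t},\cdot)$ to see that $P_t h\in C^\infty$ for any $t>0$ with derivatives under control, apply the backward equation pointwise, and then pass the $\mcl A$ inside the outer integral by dominated convergence using the exponential contraction of $P_t$ together with the bounds on $f',f''$ that are proved elsewhere in the paper to make \eqref{e:FraLap} applicable to $f$. These ingredients combine to give $\mcl A f=h-\mu(h)$, establishing that the representation \eqref{e:FRep} is indeed a solution of Stein's equation.
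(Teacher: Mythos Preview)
Your proposal is correct and follows essentially the same route as the paper: represent $f$ as the negative time integral of the OU semigroup applied to $h-\mu(h)$, use exponential contraction (from the drift $-x/\alpha$) to see the integral converges, and then obtain \eqref{e:StEq} from the backward equation $\partial_t P_t h=\mcl A P_t h$ via the fundamental theorem of calculus. The only cosmetic difference is that the paper carries out the computation at the level of the transition density, differentiating $t\mapsto p(1-e^{-t},y-e^{-t/\alpha}x)$ and invoking the stable heat equation $\partial_s p=\Delta^{\alpha/2}_z p$ together with the scaling identities \eqref{e:Delx-y}--\eqref{e:DelLin}, rather than phrasing things through $P_t$; and it identifies the transition density in a separate lemma by checking the backward PDE directly, whereas you identify it (more quickly) by computing the characteristic function of $\int_0^t e^{-(t-s)/\alpha}\,\dif Z_s$.
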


We will leave the proof of Lemma \ref{l:FRep} later. With the help of this lemma, we shall prove the following regularity results of $f$, which plays a crucial role in the proof of Theorem \ref{t:MainThm}.

\begin{proposition} \label{p:SolnF}
Let $f$ be the solution to Eq. \eqref{e:StEq} defined by \eqref{e:FRep}. We have the following estimates:
\
\Be  \label{e:RegF}
\|f'\| \ \le \ {\alpha } \|h'\|,
\Ee
\Be \label{e:RegF-0}
\|f''\| \ \le \  \frac 4 \pi \sqrt{\frac{2\alpha+1}{\alpha}}{\rm B}\big(\frac{\alpha-1}\alpha,\frac 2\alpha\big) \|h'\|,
\Ee
where $\|.\|$ is the uniform norm, i.e. $\|g\|=\sup_{x \in \R} |g(x)|$ for any bounded measurable function $g$.
\end{proposition}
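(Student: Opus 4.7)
The plan is to differentiate the representation \eqref{e:FRep} under the integral and, at each stage, exploit
$$\partial_x p(s,y-e^{-t/\alpha}x)=-e^{-t/\alpha}\partial_y p(s,y-e^{-t/\alpha}x)$$
to trade an $x$-derivative for a $y$-derivative, then integrate by parts in $y$ so that the derivatives land on $h$. The smoothness and decay of $p(s,\cdot)$ and its derivatives justify differentiating under the integral and make all boundary terms vanish, while the constant $\mu(h)$ drops out upon differentiation.

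Applying this procedure once gives
$$f'(x)=\int_0^\infty e^{-t/\alpha}\int_{-\infty}^\infty p(1-e^{-t},y-e^{-t/\alpha}x)\,h'(y)\,dy\,dt;$$
since $p(s,\cdot)$ is a probability density, the inner integral is bounded by $\|h'\|$, and $\int_0^\infty e^{-t/\alpha}dt=\alpha$ yields \eqref{e:RegF}. A second application produces
$$f''(x)=\int_0^\infty e^{-2t/\alpha}\int_{-\infty}^\infty \partial_y p(1-e^{-t},y-e^{-t/\alpha}x)\,h'(y)\,dy\,dt,$$
so $\|f''\|\le\|h'\|\int_0^\infty e^{-2t/\alpha}\|\partial_y p(1-e^{-t},\cdot)\|_{L^1}\,dt$. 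The self-similar scaling $p(s,y)=s^{-1/\alpha}p(1,s^{-1/\alpha}y)$, immediate from \eqref{e:FT}, gives $\|\partial_y p(s,\cdot)\|_{L^1}=s^{-1/\alpha}M$ with $M:=\int_\R|p'(1,z)|\,dz$, and the substitution $u=e^{-t}$ identifies the remaining $t$-integral as $\mathrm{B}\bigl(\tfrac{\alpha-1}{\alpha},\tfrac{2}{\alpha}\bigr)$; integrability near $t=0$ requires precisely $\alpha>1$. Thus \eqref{e:RegF-0} reduces to proving $M\le(4/\pi)\sqrt{(2\alpha+1)/\alpha}$.

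The main obstacle is this $L^1$ bound on $p'(1,\cdot)$. My plan is to apply Cauchy--Schwarz with the weight $(1+z^2)^{-1}$,
$$M\le\sqrt{\int_\R\frac{dz}{1+z^2}}\cdot\sqrt{\int_\R(1+z^2)|p'(1,z)|^2\,dz}=\sqrt{\pi}\,\Bigl(\int_\R(1+z^2)|p'(1,z)|^2\,dz\Bigr)^{1/2},$$
and then to use Parseval via the Fourier pairs $\widehat{p'(1,\cdot)}(\lambda)=i\lambda e^{-|\lambda|^\alpha}$ and $\widehat{zp'(1,\cdot)}(\lambda)=-(1-\alpha|\lambda|^\alpha)e^{-|\lambda|^\alpha}$ (for the convention $\hat f(\lambda)=\int e^{-i\lambda z}f(z)\,dz$) to rewrite the weighted $L^2$-norm as $\tfrac{1}{2\pi}\int_\R[\lambda^2+(1-\alpha|\lambda|^\alpha)^2]e^{-2|\lambda|^\alpha}\,d\lambda$. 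Each of the one-dimensional integrals $\int|\lambda|^k e^{-2|\lambda|^\alpha}d\lambda$ reduces to a Gamma value after $u=2|\lambda|^\alpha$; the identity $\int(1-2\alpha|\lambda|^\alpha)e^{-2|\lambda|^\alpha}d\lambda=0$, which is an integration by parts of $\partial_\lambda[\lambda e^{-2|\lambda|^\alpha}]$, removes the cross term, leaving only $\Gamma(3/\alpha)$ and $(\alpha+1)\Gamma(1/\alpha)$ contributions. I expect the majorization of this Gamma-function combination by a constant multiple of $(2\alpha+1)/\alpha$, which is needed to produce the clean prefactor $4/\pi$, to be the most delicate step; once it is in hand, multiplying by the Beta factor from the $t$-integral gives \eqref{e:RegF-0}.
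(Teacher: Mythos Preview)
Your derivation of the formulas for $f'$ and $f''$, and your reduction of \eqref{e:RegF-0} to the bound $M:=\int_\R|p'(1,z)|\,dz\le\frac{4}{\pi}\sqrt{(2\alpha+1)/\alpha}$ together with the Beta integral $\int_0^\infty(1-e^{-t})^{-1/\alpha}e^{-2t/\alpha}\,dt=\mathrm{B}\bigl(\tfrac{\alpha-1}{\alpha},\tfrac{2}{\alpha}\bigr)$, are exactly what the paper does.

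The divergence is in how you bound $M$. The paper does not use Cauchy--Schwarz/Parseval; it uses two elementary pointwise estimates (Lemma~\ref{l:HKEst}),
\[
|p'(1,z)|\le\frac{1}{\alpha\pi},\qquad |p'(1,z)|\le\frac{2\alpha+1}{\pi z^{2}},
\]
obtained by writing $p'(1,z)=-\pi^{-1}\int_0^\infty\lambda e^{-\lambda^\alpha}\sin(\lambda z)\,d\lambda$ and, for the second, integrating by parts twice in $\lambda$. Splitting the $L^1$ integral at $|z|=\sqrt{\alpha(2\alpha+1)}$ and using the first bound near the origin and the second in the tail gives
\[
M\le\frac{2\sqrt{\alpha(2\alpha+1)}}{\alpha\pi}+\frac{2(2\alpha+1)}{\pi\sqrt{\alpha(2\alpha+1)}}=\frac{4}{\pi}\sqrt{\frac{2\alpha+1}{\alpha}}
\]
directly; the constant is simply the minimum over the splitting point, so no Gamma-function comparison is needed.

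Your weighted Cauchy--Schwarz/Parseval route is valid and in fact yields a numerically \emph{smaller} value of $M$ for every $\alpha\in(1,2)$, so it certainly implies \eqref{e:RegF-0}. But to recover the stated constant you would still have to prove the Gamma inequality you flag as ``most delicate,'' whereas the paper's argument produces that constant for free. If your goal is the proposition exactly as stated, the pointwise-bound-and-split method is both shorter and complete; if you are willing to state a different (sharper) constant, your computation already finishes the job without that last step.
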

\begin{proposition}  \label{p:SolnF-1}
For any $\gamma \in (0,1)$, we have
\Be
\sup_{x \neq y} \frac{\left|\Delta^{\frac \alpha2} f(x)-\Delta^{\frac \alpha2} f(y)\right|}{|x-y|^\gamma}  \ \le \ \frac{d_{\alpha}}{\alpha}\left[\frac{16}{\pi(2-\alpha)}\sqrt{\frac{\alpha+3}{\alpha}}+\frac{16}{\pi(\alpha-1)} \sqrt{\frac{2\alpha+1}{\alpha}}\right] {\rm B}\big(\frac{1-\gamma}{\alpha}, \frac{\gamma+\alpha}{\alpha}\big) \|h'\|.
\Ee
\end{proposition}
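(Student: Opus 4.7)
The plan is to derive a usable expression for $\Delta^{\alpha/2} f$ by applying $\Delta^{\alpha/2}_x$ directly to the probabilistic representation \eqref{e:FRep}. Two key ingredients make this work: first, the homogeneity identity
\[
\Delta^{\alpha/2}_x \bigl[p(s, y - e^{-t/\alpha} x)\bigr] \;=\; e^{-t}\, (\partial_s p)(s, y - e^{-t/\alpha} x),
\]
which follows from the $\alpha$-scaling of $\Delta^{\alpha/2}$ under $x \mapsto e^{-t/\alpha} x$ combined with the heat equation $\Delta^{\alpha/2}_z p(s, z) = \partial_s p(s, z)$ from \eqref{e:ptEq}; and second, $\int_\R \partial_s p(s, z)\, dz = 0$, which lets us drop the constant $\mu(h)$. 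After changing variables via $z = y - e^{-t/\alpha} x$, this yields
\[
\Delta^{\alpha/2} f(x) \;=\; -\int_0^\infty e^{-t} \int_\R \partial_s p(1-e^{-t}, z)\, h(z + e^{-t/\alpha} x)\, dz\, dt.
\]

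For the Hölder estimate I take the difference at $x$ and $y$ and, using $\int \partial_s p\, dz = 0$ once more, subtract the $z$-independent constants $h(e^{-t/\alpha} x) - h(e^{-t/\alpha} y)$ inside the bracket. The resulting increment obeys two bounds: $2\|h'\|\,|z|$ (Lipschitz in $z$) and $2\|h'\|\, e^{-t/\alpha}|x-y|$ (Lipschitz in $x - y$). Interpolating via $\min(a,b) \le a^{1-\gamma} b^{\gamma}$ gives
\[
|\Delta^{\alpha/2} f(x) - \Delta^{\alpha/2} f(y)| \;\le\; 2\|h'\|\, |x - y|^\gamma \int_0^\infty e^{-t(1+\gamma/\alpha)} \int_\R |z|^{1-\gamma}\, |\partial_s p(1-e^{-t}, z)|\, dz\, dt.
\]
The self-similar scaling $p(s, z) = s^{-1/\alpha} p(1, s^{-1/\alpha} z)$ gives $\partial_s p(s, z) = -\tfrac{1}{\alpha}\, s^{-1-1/\alpha}\, q(s^{-1/\alpha} z)$ with $q(u) := (u\, p(1, u))' = p(1, u) + u\, p'(1, u)$; a change of variables reduces the spatial integral to $\tfrac{1}{\alpha}\, s^{-1+(1-\gamma)/\alpha} \int_\R |u|^{1-\gamma} |q(u)|\, du$, and the time integral collapses under $s = 1 - e^{-t}$ to $\mathrm{B}\bigl(\tfrac{1-\gamma}{\alpha},\, \tfrac{\gamma+\alpha}{\alpha}\bigr)$.

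It remains to bound $\int_\R |u|^{1-\gamma} |q(u)|\, du$ uniformly in $\gamma \in (0, 1)$. Writing $q = -\alpha\, \Delta^{\alpha/2} p(1, \cdot)$ in the symmetric Lévy--Khintchine form introduces the factor $d_\alpha$. After Fubini, splitting the inner $v$-integral at $|v| = 1$ yields $\int_0^1 |v|^{1-\alpha}\, dv = (2-\alpha)^{-1}$ on the small-$|v|$ region (via a second-order Taylor expansion of $p(1, u \pm v)$) and $\int_1^\infty |v|^{-\alpha - \gamma}\, dv \le (\alpha - 1)^{-1}$ on the large-$|v|$ region (using the subadditivity $|u \pm v|^{1-\gamma} \le |u|^{1-\gamma} + |v|^{1-\gamma}$ and the nonnegativity $p(1, u \pm v) + p(1, u-v) + 2p(1, u) \ge |p(1, u+v) + p(1, u-v) - 2p(1, u)|$). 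The resulting weighted $L^1$-moments of $p(1, \cdot)$ and $p''(1, \cdot)$ are controlled via Cauchy--Schwarz combined with Plancherel's theorem applied to $\widehat{p(1, \cdot)}(\lambda) = e^{-|\lambda|^\alpha}$, reducing to explicit Gamma-function evaluations of $\int |\lambda|^{2k} e^{-2|\lambda|^\alpha}\, d\lambda$ that produce the factors $\sqrt{(\alpha+3)/\alpha}$ and $\sqrt{(2\alpha+1)/\alpha}$. The main obstacle is this careful Fourier-side bookkeeping needed to match exactly the constants in the stated bound.
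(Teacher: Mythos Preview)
Your route is genuinely different from the paper's and is sound up to the last step. The paper does \emph{not} work with $\partial_s p$ directly: instead it uses the integration-by-parts identity of Lemma~\ref{l:IBP} to transfer $\Delta^{\alpha/2}$ from $p$ onto $h'$, obtaining an expression involving $\mcl I(p(\cdot-s^{-1/\alpha}e^{-t/\alpha}x))(z)\,h'(s^{1/\alpha}z)$. Taking out $\|h'\|$ immediately, it then estimates the difference of the $\mcl I$-kernels by splitting $|\tilde x-\tilde y|\lessgtr 1$ and $|w|\lessgtr 1$; only the \emph{unweighted} integrals $\int|p'(u)|\,du$ and $\int|p''(u)|\,du$ appear, and these are bounded via the pointwise estimates of Lemma~\ref{l:HKEst}. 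Your approach---heat equation plus the interpolation $\min(a,b)\le a^{1-\gamma}b^\gamma$---is slicker and produces the Beta factor by exactly the same change of variables, but it leads to the \emph{weighted} integral $\int|u|^{1-\gamma}|q(u)|\,du$, which must be bounded uniformly in $\gamma$.

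The gap is in that last step. The factors $\sqrt{(2\alpha+1)/\alpha}$ and $\sqrt{(\alpha+3)/\alpha}$ in the paper are \emph{not} Plancherel outputs: they come from the two pointwise bounds on $|\partial_x^k p(1,x)|$ in Lemma~\ref{l:HKEst} (a uniform bound and an $x^{-2}$ bound, obtained by Fourier-side integration by parts), and the square root arises from optimizing the split point in $\int_{|u|\le R}+\int_{|u|>R}$. No Cauchy--Schwarz/Plancherel computation of $\int|\lambda|^{2k}e^{-2|\lambda|^\alpha}d\lambda$ will reproduce these particular numbers. Moreover, tracing your prefactors gives $2\|h'\|\cdot\frac1\alpha\cdot\alpha d_\alpha=2d_\alpha$ in front of the remaining spatial integral, whereas the paper's formula~\eqref{e:FracF-0} and Lemma~\ref{l:IBP} are what place the $d_\alpha/\alpha$ outside; the two routes simply do not share the same bookkeeping, so the stated constant cannot be recovered by your method without additional ad hoc adjustments. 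Your argument yields a correct bound of the form $C_\alpha\,\mathrm{B}\!\left(\tfrac{1-\gamma}{\alpha},\tfrac{\gamma+\alpha}{\alpha}\right)\|h'\|$, but with a different $C_\alpha$; to match the proposition as stated you would have to revert to Lemma~\ref{l:HKEst} and track constants far more carefully than the sketch indicates.
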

\ \ \ \

\subsection{Proof of Theorem \ref{t:MainThm}}
Recall that $\zeta_{n,1},...,\zeta_{n,n}$ are a sequence of independent random variables with $\E \zeta_{n,i}=0$ and $\E |\zeta_{n,i}|<\infty$ for $1 \le i \le n$. Recall the notation
$$S_n\ = \ \zeta_{n,1}+...+\zeta_{n,n}; $$
$$S_n(i)\ =\ S_n-\zeta_{n,i}, \ \ \ \ \ \ \ 1 \le i \le n.$$
\begin{lem}  \label{l:Main1}
We have
\ \ \ \
\Be
\E\left[S_n f'(S_n)\right]\ =\ \sum_{i=1}^n \int_{-N}^N \E\big[ K_i(t,N) f''(S_n(i)+t)\big] \dif t+\mcl R_1 \ \ \ \ \forall \ N>0,
\Ee
where $ K_i(t,N)\ =\ \E\left[\zeta_{n,i} 1_{\{0 \le t \le \zeta_{n,i}  \le N\}}-\zeta_{n,i} 1_{\{-N \le  \zeta_{n,i}  \le t \le 0\}}\right]$, and
\Be
\begin{split}
\mcl R_1\ =\ \sum_{i=1}^n \E\left\{\zeta_{n,i} \left[f'(S_n)-f'(S_n(i))\right] 1_{\{|\zeta_{n,i}|>N\}}\right\}  \ \ \ \ \ \forall \ N>0.
\end{split}
\Ee
\end{lem}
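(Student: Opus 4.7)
The plan is to decompose $\E[S_n f'(S_n)]$ summand by summand via independence, then use a fundamental-theorem-of-calculus identity to replace a first-order increment of $f'$ by an integral of $f''$, and finally truncate at $\pm N$ to isolate the remainder $\mcl R_1$. I will do this in the following four steps.

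First I would split $\E[S_n f'(S_n)] = \sum_{i=1}^n \E[\zeta_{n,i} f'(S_n)]$ by linearity, and use the fact that $\zeta_{n,i}$ is independent of $S_n(i) = S_n - \zeta_{n,i}$ together with $\E[\zeta_{n,i}] = 0$ to get $\E[\zeta_{n,i} f'(S_n(i))] = 0$. Subtracting this from each summand yields
\[
\E[S_n f'(S_n)] \;=\; \sum_{i=1}^n \E\bigl[\zeta_{n,i}\bigl(f'(S_n) - f'(S_n(i))\bigr)\bigr].
\]

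Next, since $f$ is a solution of Stein's equation and has a bounded $f''$ (by Proposition \ref{p:SolnF}), I would write the fundamental-theorem-of-calculus identity $f'(S_n) - f'(S_n(i)) = \int_0^{\zeta_{n,i}} f''(S_n(i)+t)\,\dif t$, valid for both signs of $\zeta_{n,i}$. Multiplying by $\zeta_{n,i}$ and using the standard trick of writing the signed integral as an unsigned one with indicator functions gives, for every $\omega$,
\[
\zeta_{n,i}\bigl(f'(S_n) - f'(S_n(i))\bigr) \;=\; \int_{-\infty}^{\infty}\!\bigl[\zeta_{n,i} \one_{\{0 \le t \le \zeta_{n,i}\}} - \zeta_{n,i} \one_{\{\zeta_{n,i} \le t \le 0\}}\bigr] f''(S_n(i)+t)\,\dif t.
\]

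Third, I would split the above identity according to whether $|\zeta_{n,i}| \le N$ or $|\zeta_{n,i}| > N$. On the event $\{|\zeta_{n,i}| \le N\}$ the effective range of $t$ is contained in $[-N,N]$, and the indicators become $\one_{\{0 \le t \le \zeta_{n,i} \le N\}}$ and $\one_{\{-N \le \zeta_{n,i} \le t \le 0\}}$, exactly matching the definition of $K_i(t,N)$. On the event $\{|\zeta_{n,i}|>N\}$ I would simply revert to the compact form $\zeta_{n,i}[f'(S_n)-f'(S_n(i))]\one_{\{|\zeta_{n,i}|>N\}}$, which is precisely the $i$-th summand of $\mcl R_1$.

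Finally, taking expectations, using the independence of $\zeta_{n,i}$ and $S_n(i)$, and applying Fubini's theorem (which is justified by $\E|\zeta_{n,i}|<\infty$ and $\|f''\|<\infty$, so that the integrand is absolutely integrable on the compact set $[-N,N]\times\Omega$), the $|\zeta_{n,i}|\le N$ piece becomes $\int_{-N}^N K_i(t,N)\,\E[f''(S_n(i)+t)]\,\dif t$. Summing over $i$ gives the claimed identity. The only real step that requires a touch of care is keeping the signs and indicator inequalities consistent when $\zeta_{n,i}<0$; everything else is routine.
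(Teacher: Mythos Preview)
Your proposal is correct and follows essentially the same approach as the paper's proof: both use $\E\zeta_{n,i}=0$ and independence to introduce $f'(S_n(i))$, apply the fundamental theorem of calculus to convert the increment of $f'$ into an integral of $f''$ with sign-tracking indicators, and then truncate at $\pm N$ and invoke Fubini plus independence to factor out $K_i(t,N)$. The only cosmetic difference is that the paper splits into $\{|\zeta_{n,i}|\le N\}$ and $\{|\zeta_{n,i}|>N\}$ before applying the fundamental theorem of calculus, whereas you do the split afterward; the logic is otherwise identical.
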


\begin{proof}
By the independence and $\E \zeta_{n,i}=0$ for each $i$, we have
\ \ \ \
\Be
\begin{split}
\E\left[S_n f'(S_n)\right]&\ =\ \sum_{i=1}^n \E\left[\zeta_{n,i} f'(S_n)\right]\\
&\ =\ \sum_{i=1}^n \E\left\{\zeta_{n,i} \left[f'(S_n)-f'(S_n(i))\right]\right\}\ =\ \sum_{i=1}^n I(i)+\mcl R_1,
\end{split}
\Ee
where
\Be
\begin{split}
&I(i)\ =\ \E\left\{\zeta_{n,i} \left[f'(S_n)-f'(S_n(i))\right] 1_{\{|\zeta_{n,i}| \le N\}}\right\}, \\
&\mcl R_1\ =\ \sum_{i=1}^n \E\left\{\zeta_{n,i} \left[f'(S_n)-f'(S_n(i))\right] 1_{\{|\zeta_{n,i}|>N\}}\right\}.
\end{split}
\Ee
For $I(i)$, we have
 \ \ \
\Be
\begin{split}
I(i)&\ =\ \E\left\{\zeta_{n,i} \left[f'(S_n)-f'(S_n(i))\right] 1_{\{|\zeta_{n,i}| \le N\}}\right\}  \\
&\ =\ \E\left\{\zeta_{n,i} \left[\int_0^{\zeta_{n,i}} f''(S_n(i)+t) \dif t\right] 1_{\{|\zeta_{n,i}| \le N\}}\right\} \\
&\ =\ \E\left\{\zeta_{n,i} \left[\int_{-\infty}^{\infty} f''(S_n(i)+t) \left(1_{\{0 \le t \le \zeta_{n,i}\}}-1_{\{\zeta_{n,i} \le t \le 0\}}\right) \dif t\right] 1_{\{|\zeta_{n,i}| \le N\}}\right\} \\
&\ =\ \int_{-\infty}^{\infty} \E\left[f''(S_n(i)+t) \left(1_{\{0 \le t \le \zeta_{n,i}\}}-1_{\{\zeta_{n,i} \le t \le 0\}}\right) \zeta_{n,i}  1_{\{|\zeta_{n,i}| \le N\}}\right]\dif t \\
&\ =\ \int_{-\infty}^{\infty} \E\left[f''(S_n(i)+t)\right]\E \left[\left(1_{\{0 \le t \le \zeta_{n,i}\}}-1_{\{\zeta_{n,i} \le t \le 0\}}\right) \zeta_{n,i}  1_{\{|\zeta_{n,i}| \le N\}}\right]\dif t \\
&\ =\ \int_{-N}^{N} K_i(t,N) \E\left[f''(S_n(i)+t)\right] \dif t,
\end{split}
\Ee
where the last second inequality is by the independence of $S_n(i)$ and $\zeta_{n,i}$.

Combining all the relations above, we immediately get the equality in the lemma, as desired.
\end{proof}

\begin{lem}  \label{l:Main2}
For all $x \in \R$, we have
\Be \label{e:FracF-0}
\Delta^{\frac \alpha2} f(x) \ = \ \frac{d_{\alpha}}{\alpha}\int_{-\infty}^{\infty} \frac{f'(x+z)-f'(x)}{{\rm sgn}(z)|z|^{\alpha}}\dif z.
\Ee
Moreover, for all $x \in \R$,
\Be \label{e:FracF-1}
\Delta^{\frac \alpha 2} f(x)\ =\ \int_{-N}^N \mcl K_\alpha(t,N)f''(x+t) \dif t+\mcl R_2(x),
\Ee
where $N>0$ is an arbitrary number and
\Bes
\begin{split}
& \mcl K_\alpha(t,N) \ =\ \frac{d_\alpha}{\alpha(\alpha-1)}\left(|t|^{1-\alpha}-N^{1-\alpha}\right), \\
& \mcl R_2(x)\ =\ \frac{d_\alpha}{\alpha}\int_{|z|>N} \frac{f'(x+z)-f'(x)}{{\rm sgn}(z)|z|^\alpha} \dif z.
\end{split}
\Ees
\end{lem}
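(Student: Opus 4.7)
The plan is to verify the two claimed formulas by integration by parts and Fubini, starting from the definition \eqref{e:FraLap} of $\Delta^{\alpha/2}f$. Throughout, I will use that $f',f''$ are bounded (Proposition \ref{p:SolnF}), so all boundary terms and integrals are absolutely convergent.

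\textbf{Step 1 (Proof of \eqref{e:FracF-0}).} Split the integral in \eqref{e:FraLap} into $\{y>0\}$ and $\{y<0\}$ and integrate by parts on each half. For the positive half, use $\frac{d}{dy}\bigl(-\tfrac{1}{\alpha y^\alpha}\bigr)=\tfrac{1}{y^{1+\alpha}}$ and differentiate the numerator $f(x+y)-f(x)-yf'(x)$ to get $f'(x+y)-f'(x)$. Since $\alpha\in(1,2)$, the boundary term at $y=\infty$ vanishes by boundedness of $f$ and $f'$, and the boundary term at $y=0^+$ vanishes because $f(x+y)-f(x)-yf'(x)=O(y^2)$ while $y^{-\alpha}$ with $\alpha<2$ is integrable against $y^2$. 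One obtains
\begin{equation*}
\int_0^\infty \frac{f(x+y)-f(x)-yf'(x)}{y^{1+\alpha}}\,\dif y \;=\; \frac{1}{\alpha}\int_0^\infty \frac{f'(x+y)-f'(x)}{y^{\alpha}}\,\dif y.
\end{equation*}
Substitute $y=-z$ on the negative half; after the same integration by parts one gets $-\tfrac{1}{\alpha}\int_{-\infty}^0 \frac{f'(x+y)-f'(x)}{|y|^{\alpha}}\,\dif y$. Combining the two halves and noting ${\rm sgn}(y)|y|^\alpha$ equals $y^\alpha$ for $y>0$ and $-|y|^\alpha$ for $y<0$ produces \eqref{e:FracF-0}.

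\textbf{Step 2 (Proof of \eqref{e:FracF-1}).} Starting from \eqref{e:FracF-0}, simply cut the integral at $|z|=N$:
\begin{equation*}
\Delta^{\frac\alpha 2}f(x) \;=\; \frac{d_\alpha}{\alpha}\int_{|z|\le N}\frac{f'(x+z)-f'(x)}{{\rm sgn}(z)|z|^\alpha}\,\dif z \;+\; \mcl R_2(x),
\end{equation*}
which already isolates $\mcl R_2$. For the inner integral, use $f'(x+z)-f'(x)=\int_0^z f''(x+s)\,\dif s$ (with the convention that the integral picks up a minus sign for $z<0$) and invoke Fubini. On the positive part, $\int_0^N z^{-\alpha}\int_0^z f''(x+s)\,\dif s\,\dif z=\int_0^N f''(x+s)\int_s^N z^{-\alpha}\,\dif z\,\dif s=\tfrac{1}{\alpha-1}\int_0^N f''(x+s)(s^{1-\alpha}-N^{1-\alpha})\,\dif s$. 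An identical computation on the negative part (after substituting $z=-t$ and writing $f'(x)-f'(x-t)=\int_{-t}^0 f''(x+u)\,\dif u$) yields $\tfrac{1}{\alpha-1}\int_{-N}^0 f''(x+u)(|u|^{1-\alpha}-N^{1-\alpha})\,\dif u$. Adding the two halves and multiplying by $d_\alpha/\alpha$ gives precisely $\int_{-N}^N \mcl K_\alpha(t,N)f''(x+t)\,\dif t$ with the stated $\mcl K_\alpha$, completing the proof.

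\textbf{Expected difficulties.} The only delicate point is justifying the integration by parts in Step 1, specifically that the boundary contributions at $y=0$ and $|y|=\infty$ vanish; this requires careful use of the second-order Taylor expansion $f(x+y)-f(x)-yf'(x)=\tfrac12 f''(x+\theta y)y^2$ near $0$ together with $\alpha\in(1,2)$, and boundedness of $f,f'$ at infinity. The rest is bookkeeping with signs when passing between the variables $y$, $z$ and $t$, and an application of Fubini whose hypotheses are trivially met because $f''$ is bounded and $|t|^{1-\alpha}-N^{1-\alpha}$ is integrable on $[-N,N]$ for $\alpha<2$.
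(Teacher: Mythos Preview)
Your proposal is correct and follows essentially the same route as the paper: the only cosmetic difference is that in Step~1 you phrase the passage from \eqref{e:FraLap} to \eqref{e:FracF-0} as an integration by parts (with explicit boundary-term checks), whereas the paper writes $f(x+y)-f(x)-yf'(x)=\int_0^y(f'(x+z)-f'(x))\,\dif z$ and applies Fubini, which is the same computation. Step~2 is identical to the paper's argument.
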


\begin{proof}
We observe
\ \ \
\Bes
\begin{split}
\Delta^{\frac \alpha 2} f(x)&\ =\ d_\alpha \int_\R \frac{f(x+y)-f(x)-yf'(x)}{|y|^{1+\alpha}} \dif y \\
&\ =\ \int_\R \frac{d_\alpha}{|y|^{1+\alpha}} \int_0^y (f'(x+z)-f'(x)) \dif z\dif y \\
&\ =\ \int_{0}^\infty \frac{d_\alpha}{y^{1+\alpha}} \int_0^y (f'(x+z)-f'(x)) \dif z\dif y+\int_{-\infty}^0 \frac{d_\alpha}{(-y)^{1+\alpha}} \int_0^y (f'(x+z)-f'(x)) \dif z\dif y.
\end{split}
\Ees
It is easy to see that
\ \ \
\Bes
\begin{split}
\int_{0}^\infty \frac{d_\alpha}{y^{1+\alpha}} \int_0^y (f'(x+z)-f'(x)) \dif z\dif y&\ =\ \int_0^\infty (f'(x+z)-f'(x))\int_z^\infty \frac{d_\alpha}{y^{1+\alpha}} \dif y \dif z \\
&\ =\ \frac{d_\alpha}{\alpha}\int_0^\infty \frac{f'(x+z)-f'(x)}{z^\alpha} \dif z.
\end{split}
\Ees
Similarly,
$$\int_{-\infty}^0 \frac{d_\alpha}{(-y)^{1+\alpha}} \int_0^y (f'(x+z)-f'(x)) \dif z\dif y\ =\ -\frac{d_\alpha}{\alpha}\int_{-\infty}^0 \frac{f'(x+z)-f'(x)}{(-z)^\alpha} \dif z.$$
Combining the previous two relations, we immediately obtain \eqref{e:FracF-0}.

Now we write \eqref{e:FracF-0} as
\Bes
\begin{split}
\Delta^{\frac \alpha 2} f(x)&\ =\ \mcl J_1(x)-\mcl J_2(x)+\mcl R_2(x),
\end{split}
\Ees
with
\Be \label{e:J12R2}
\begin{split}
& \mcl J_{1}(x)\ =\ \frac{d_\alpha}{\alpha}\int_0^N \frac{f'(x+z)-f'(x)}{z^\alpha} \dif z, \\
& \mcl J_{2}(x)\ =\ \frac{d_\alpha}{\alpha}\int_{-N}^0 \frac{f'(x+z)-f'(x)}{(-z)^\alpha} \dif z, \\
& \mcl R_2(x)\ =\ \frac{d_\alpha}{\alpha}\int_N^\infty \frac{f'(x+z)-f'(x)}{z^\alpha} \dif z-\frac{d_\alpha}{\alpha}\int_{-\infty}^{-N} \frac{f'(x+z)-f'(x)}{(-z)^\alpha} \dif z.
\end{split}
\Ee
Moreover,
\Be
\begin{split}
\mcl J_{1}(x)&\ =\ \int_0^N \frac{d_\alpha}{\alpha z^\alpha}\int_0^z f''(x+t) \dif t \dif z \\
&\ =\ \int_0^\infty \int_0^z \frac{d_\alpha}{\alpha z^\alpha}f''(x+t) 1_{\{0 \le t \le z \le N\}}  \dif t \dif z \\
&\ =\ \int_0^\infty \int_t^\infty \frac{d_\alpha}{\alpha z^\alpha} 1_{\{0 \le t \le z \le N\}}  \dif z f''(x+t)\dif t \\
&\ =\ \frac{d_\alpha}{\alpha(\alpha-1)}\int_0^N \left(t^{-\alpha+1}-N^{-\alpha+1}\right)f''(x+t) \dif t.
\end{split}
\Ee
Similarly, we have
\Be
\begin{split}
\mcl J_{2}(x)\ =\ \frac{-d_\alpha}{\alpha(\alpha-1)}\int_{-N}^0 \left[(-t)^{-\alpha+1}-N^{-\alpha+1}\right]f''(x+t) \dif t.
\end{split}
\Ee
Combining the above relations of $\mcl J_1(x), \mcl J_2(x)$ and $\mcl R_2(x)$, we immediately conclude the proof.
\end{proof}

Combining Lemmas \ref{l:Main1} and \ref{l:Main2}, we prove
\begin{lem} \label{l:Main3}
The following equality holds:
\Be  \label{e:Main3}
\begin{split}
\E\left[\Delta^{\frac \alpha 2} f(S_n)-\frac{1}{\alpha  } S_n f'(S_n)\right] & \ =\ \sum_{i=1}^n \int_{-N}^N \E\left[\left(\frac{\mcl K_\alpha(t,N)} n-\frac{ K_i(t,N)}{\alpha  }\right) f''(S_n(i)+t)\right] \dif t \\
& \ \ \ \ \ \ \ -\ \frac 1\alpha \mcl R_1+\frac 1n\sum_{i=1}^n\E \big[\mcl R_2(S_n(i))\big]+\mcl R_3,
\end{split}
\Ee
where $\mcl R_1$ and $\mcl R_2(x)$ are defined in Lemmas \ref{l:Main1} and \ref{l:Main2} respectively, and
$$\mcl R_{3}\ =\ \frac 1n\sum_{i=1}^n  \E[\Delta^{\frac \alpha 2} f(S_n)-\Delta^{\frac \alpha 2} f(S_n(i))].$$
\end{lem}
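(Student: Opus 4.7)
The plan is to combine Lemmas \ref{l:Main1} and \ref{l:Main2} by expressing $\E[\Delta^{\frac\alpha2} f(S_n)]$ through the ``leave-one-out'' sums $S_n(i)$, which is precisely the form that matches the $K_i(t,N)$ expression from Lemma \ref{l:Main1}.

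First, I would apply Lemma \ref{l:Main2} pointwise at $x = S_n(i)$ for each $i = 1,\ldots,n$, yielding
\Bes
\Delta^{\frac \alpha 2} f(S_n(i)) \ =\ \int_{-N}^N \mcl K_\alpha(t,N)\, f''(S_n(i)+t) \dif t \ +\ \mcl R_2(S_n(i)).
\Ees
Taking expectations, averaging over $i$, and using the trivial identity
\Bes
\E[\Delta^{\frac \alpha 2} f(S_n)] \ =\ \frac{1}{n}\sum_{i=1}^n \E[\Delta^{\frac \alpha 2} f(S_n(i))] \ +\ \mcl R_3,
\Ees
where $\mcl R_3$ is defined exactly as in the statement, I obtain
\Bes
\E[\Delta^{\frac \alpha 2} f(S_n)] \ =\ \sum_{i=1}^n \int_{-N}^N \frac{\mcl K_\alpha(t,N)}{n}\, \E[f''(S_n(i)+t)] \dif t \ +\ \frac{1}{n}\sum_{i=1}^n \E[\mcl R_2(S_n(i))] \ +\ \mcl R_3.
\Ees

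Next, I would invoke Lemma \ref{l:Main1} to rewrite $\frac{1}{\alpha}\E[S_n f'(S_n)]$ as
\Bes
\frac{1}{\alpha}\E[S_n f'(S_n)] \ =\ \sum_{i=1}^n \int_{-N}^N \frac{K_i(t,N)}{\alpha}\, \E[f''(S_n(i)+t)] \dif t \ +\ \frac{1}{\alpha}\mcl R_1.
\Ees
Subtracting this from the previous display collects the two integral terms into the single discrepancy integrand $\frac{\mcl K_\alpha(t,N)}{n}-\frac{K_i(t,N)}{\alpha}$, and collects the four remainder terms exactly as in \eqref{e:Main3}.

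There is essentially no obstacle: the proof is a two-line linear-combination argument once Lemmas \ref{l:Main1} and \ref{l:Main2} are in hand. The only point that deserves a brief justification is the interchange of expectation and the integral $\int_{-N}^N \mcl K_\alpha(t,N) f''(S_n(i)+t) \dif t$, which follows from Fubini's theorem since $f''$ is bounded (by Proposition \ref{p:SolnF}) and $\mcl K_\alpha(\cdot,N)$ is integrable on $[-N,N]$ because $1-\alpha > -1$. The ``trick'' is simply to introduce the average $\frac{1}{n}\sum_i \Delta^{\frac\alpha2} f(S_n(i))$ as a proxy for $\Delta^{\frac\alpha2} f(S_n)$, absorbing the mismatch into $\mcl R_3$, so that the kernel $\mcl K_\alpha(t,N)$ appears with weight $1/n$ and can be paired term-by-term against $K_i(t,N)/\alpha$.
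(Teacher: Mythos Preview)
Your proposal is correct and follows essentially the same approach as the paper: introduce the average $\frac{1}{n}\sum_i \E[\Delta^{\frac\alpha2} f(S_n(i))]$ (with the discrepancy absorbed into $\mcl R_3$), apply Lemma~\ref{l:Main2} at each $S_n(i)$, apply Lemma~\ref{l:Main1} for the $S_n f'(S_n)$ term, and subtract. Your added remark on Fubini is a nice touch that the paper leaves implicit.
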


\begin{proof}
Observe
\Bes
\begin{split}
\E[\Delta^{\frac \alpha 2} f(S_n)]-\frac{1}{\alpha  } \E[S_n f'(S_n)]\ =\ \frac 1n\sum_{i=1}^n  \E\left[\Delta^{\frac \alpha 2} f(S_n(i))\right]-\frac{1}{\alpha} \E\left[S_n f'(S_n)\right]+\mcl R_3.
\end{split}
\Ees
By Lemmas \ref{l:Main1} and \ref{l:Main2}, we have
\ \ \ \ \ \
\Bes
\begin{split}
& \ \ \ \ \ \ \ \frac 1n\sum_{i=1}^n  \E\left[\Delta^{\frac \alpha 2} f(S_n(i))\right]-\frac{1}{\alpha} \E\left[S_n f'(S_n)\right]\\
 &\ =\ \frac 1n\sum_{i=1}^n \E \left\{\int_{-N}^N \mcl K_{\alpha}(t,N) f''(S_{n}(i)+t) \dif t+\mcl R_2(S_n(i))\right\} \\
& \ \ \ \ \ \ \ -\frac1{\alpha  }\sum_{i=1}^n \int_{-N}^N \E\left[ K_i(t,N) f''(S_n(i)+t)\right] \dif t-\frac 1\alpha \mcl R_1 \\
& \ = \ \sum_{i=1}^{n}\int_{-N}^N \E\left[\left(\frac{\mcl K_{\alpha}(t,N)}{n}-\frac{ K_i(t,N)}{\alpha  }\right)f''(S_n(i)+t)\right]\dif t \\
& \ \ \ \ \ \ \ \ +\frac 1n\sum_{i=1}^n\E \big[\mcl R_2(S_n(i))\big]-\frac 1\alpha\mcl R_1.
\end{split}
\Ees
Hence, the lemma is proved.
\end{proof}

\begin{proof} [{\bf Proof of Theorem \ref{t:MainThm}}]
By Eq. \eqref{e:StEq}, we have
\ \
\ \ \ \
\Bes
\begin{split}
\E[h(S_n)]-\mu(h)&\ =\ \E\left[\Delta^{\frac \alpha 2} f(S_n)-\frac{1}{\alpha  } S_n f'(S_n)\right].
\end{split}
\Ees
To bound $\big|\E[h(S_n)]-\mu(h)\big|$, by Lemma \ref{l:Main3}, it suffices to bound the four terms on the right side of \eqref{e:Main3}.
By \eqref{e:RegF}, we have
\
$$ \mcl R_1 \ \le \ 2\alpha \|h'\|  \sum_{i=1}^{n}\E\left[|\zeta_{n,i}| 1_{\{|\zeta_{n,i}|>N\}}\right], $$
$$\frac 1n\sum_{i=1}^n\E \big|\mcl R_2(S_n(i))\big| \ \le \ 2d_{\alpha} \|h'\| \int_{|z|>N} \frac{1}{|z|^\alpha} \dif z \ \le \ \frac{4 d_{\alpha}}{\alpha-1} \|h'\| N^{1-\alpha}.$$
For the integral term, by \eqref{e:RegF-0} we have
\ \ \
\Bes
\begin{split}
& \ \ \ \ \ \left|\sum_{i=1}^n \int_{-N}^N \E\left[\left(\frac{\mcl K_\alpha(t,N)} n-\frac{ K_i(t,N)}{\alpha  }\right) f''(S_n(i)+t)\right] \dif t\right| \\
& \ \le \ \|f''\| \sum_{i=1}^n \int_{-N}^N \left|\frac{\mcl K_\alpha(t,N)} n-\frac{ K_i(t,N)}{\alpha  }\right| \dif t \ \le \ D_{\alpha } \|h'\| \sum_{i=1}^n \int_{-N}^N \left|\frac{\mcl K_\alpha(t,N)} n-\frac{ K_i(t,N)}{\alpha  }\right| \dif t.
\end{split}
\Ees
Finally, for $\mcl R_{3}$, by Proposition \ref{p:SolnF-1}, for all $\gamma \in (0,1)$ we have
\ \ \
\Bes
\begin{split}
|\mcl R_{3}| &\ \le \ \frac 1n\sum_{i=1}^n  \left|\E[\Delta^{\frac \alpha 2} f(S_n)-\Delta^{\frac \alpha 2} f(S_n(i))]\right| \\
&\ \le \ \frac 1n\sum_{i=1}^n  \E\left[\frac{\left|\Delta^{\frac \alpha 2} f(S_n)-\Delta^{\frac \alpha 2} f(S_n(i))\right|}{|\zeta_{n,i}|^{\gamma}} |\zeta_{n,i}|^{\gamma}\right]  \ \le \ \frac{D_{\alpha,\gamma}}n\sum_{i=1}^n   \E\left|\zeta_{n,i}\right|^{\gamma}\|h'\|.
\end{split}
\Ees
Combining the above estimates, we immediately obtain the inequality in the theorem, as desired.
\end{proof}

\section{Proofs of Theorem \ref{t:MThm2} and Corollary \ref{c:MThm2}}  \label{s:MThm2Proof}
Let us first prove Theorem \ref{t:MThm2} and then Corollary \ref{c:MThm2}, as stressed before, Corollary \ref{c:MThm2} can help us to fast determine the leading order of convergence rates, while Theorem \ref{t:MThm2} can give us an explicit bounds for $d_{W}(\mcl L(S_{n}),\mu)$.
\begin{proof}[{\bf Proof of Theorem \ref{t:MThm2}}]
It suffices to prove the inequality in the theorem by bounding the integral and the remainder $\mcl R_{N, n}$ in Theorem \ref{t:MainThm}. For the integral term, we have
\Be
\begin{split}
\sum_{i=1}^n\int_{-N}^N \left|\frac{\mcl K_\alpha(t,N)}n -\frac{ K_i(t,N)}{\alpha  }\right|
& \ = \ \frac 1{\alpha} \int_{-N}^N \left|\alpha \mcl K_{\alpha}(t,N)-n  K_{1}(t,N)\right| \dif t.
\end{split}
\Ee

Let us first estimate $\mcl R_{N, n}$, in which we need to bound the two sums. Recall $\zeta_{n,1}=\ell_n^{-\frac 1\alpha} \left(\xi_{1}-\E \xi_{1}\right)$, for the first sum, by Lemma \ref{l:MomEst},
\ \ \
\Bes
\begin{split}
\sum_{i=1}^{n} \E\big(|\zeta_{n,i}|  1_{\{|\zeta_{n,i}|>N\}}\big) 
& \ = \ n \ell^{-\frac1\alpha}_{n} \left[\ell_n^{\frac 1 \alpha}N \PP\left(|\xi_1-\E \xi_1|> \ell_n^{\frac 1 \alpha}N\right)+\int_{\ell_n^{\frac 1 \alpha}N}^\infty \PP\left(|\xi_1-\E \xi_1|>r \right) \dif  r \right] \\
& \ \le \ n  N \PP\left(|\xi_1|> \ell_n^{\frac 1 \alpha}N-|\E\xi_1|\right)+n \ell_n^{-\frac 1\alpha}\int_{\ell_n^{\frac 1 \alpha}N-|\E\xi_1|}^\infty \PP\left(|\xi_1|>r \right) \dif  r \\
&\ = \ n  N \PP\left(|\xi_1|> \ell_n^{\frac 1 \alpha}N \delta_{n}\right)+n \ell_n^{-\frac 1\alpha}\int_{\ell_n^{\frac 1 \alpha}N \delta_{n}}^\infty \PP\left(|\xi_1|>r \right) \dif  r,
\end{split}
\Ees
where $\delta_{n}=1-\ell_{n}^{-\frac 1\alpha} N^{-1} |\E \xi_{1}|$.
This and the assumption (ii') yields
\ \ \
\Bes
\begin{split}
n  N \PP\left(|\xi_1|> \ell_n^{\frac 1 \alpha}N \delta_{n}\right)& \ = \ nN \frac{\theta(1+M_{2}(\ell^{\frac 1\alpha}_{n}N \delta_{n}))}{(\ell^{\frac 1\alpha}_{n}N \delta_{n})^{\alpha}} \ = \  \frac{2d_{\alpha}(1+M_{2}(\ell_{n}^{\frac 1\alpha} N \delta_{n}))}{\alpha \delta_{n}^{\alpha}  N^{\alpha-1}}
\end{split}
\Ees
and
\Bes
\begin{split}
n \ell_n^{-\frac 1\alpha}\int_{\ell_n^{\frac 1 \alpha}N \delta_{n}}^\infty \PP\left(|\xi_1|>r \right) \dif  r &\ = \ n \ell^{-\frac1\alpha}_{n}\frac{\theta (\ell_{n}^{\frac 1\alpha} N \delta_{n})^{-\alpha+1}}{(\alpha-1)}+n \ell_n^{-\frac 1\alpha}\int_{\ell_n^{\frac 1 \alpha}N \delta_{n}}^\infty \frac{M_{2}(r)}{r^{\alpha}} \dif  r \\
&\ = \ \frac{2 d_{\alpha} (\delta_{n}N)^{1-\alpha}}{(\alpha-1)\alpha}+\frac{2 d_{\alpha}}{\alpha N^{\alpha-1}}\int_{\delta_{n}}^\infty \frac{M_{2}(r \ell_{n}^{\frac 1\alpha} N)}{r^{\alpha}} \dif  r.
\end{split}
\Ees
Therefore,
\ \
\Bes
\begin{split}
\sum_{i=1}^{n} \E\big(|\zeta_{n,i}|  1_{\{|\zeta_{n,i}|>N\}}\big)
& \ \le \ \frac{2d_{\alpha}}{\alpha \delta_{n}^{\alpha-1}} \left(\frac{1}{\alpha-1}+\frac{1}{\delta_{n}}+\frac{M_{2}(\ell_{n}^{\frac 1\alpha} N \delta_{n})}{\delta_{n}}+\int_{\delta_{n}}^\infty \frac{M_{2}(r \ell_{n}^{\frac 1\alpha} N)}{r^{\alpha}\delta^{1-\alpha}_{n}} \dif  r\right) N^{1-\alpha}.
\end{split}
\Ees
Moreover, the other sum can be bounded as follows: we immediately obtain
 \ \ \
\Bes
\frac{D_{\alpha,\gamma}}{n}\sum_{i=1}^{n}\mathbb{E}|\zeta_{n,i}|^{\gamma}=D_{\alpha,\gamma}\ell_{n}^{-\frac{\gamma}{\alpha}}\mathbb{E}|\xi_{1}-\mathbb{E}\xi_{1}|^{\gamma}.
\Ees
Combining all the estimates with the inequality in Theorem \eqref{t:MainThm}, we immediately obtain the estimate in the theorem, as desired.
\end{proof}

It is easy to verify that (i') and (ii') imply
\Be \label{e:PX1M}
\begin{split}
& \PP\left(\xi_1>x\right)\ =\ \frac{1+M_1(x)}2(1+M_2(x)) \theta x^{-\alpha},   \ \ \ \ \ \ \  \ x>A; \\
&  \PP\left(\xi_1<x\right)\ =\ \frac{1-M_1(|x|)}2(1+M_2(|x|)) \theta |x|^{-\alpha},  \ \ \ \ \ \ \ \ x<-A.
\end{split}
\Ee
\begin{proof} [{\bf Proof of Corollary \ref{c:MThm2}}]
By Theorem \ref{t:MThm2} and noticing $\ell_{n}=\frac{\alpha \theta}{2 d_{\alpha}} n$, we have
\ \ \ \
\Bes
\mcl R_{N,n} \ \le \ C_{\alpha} \left[n^{-\frac{2-\alpha}{\alpha}}+N^{1-\alpha}\right].
\Ees
It remains to bound the integral
$$\int_{-N}^{N} \big|\alpha \mcl K_{\alpha}(t,N)-n K_{1}(t,N)\big| \dif t.$$
Recall the definitions of $\mcl K_{\alpha}(t,N)$, $K_{1}(t,N)$ and $\zeta_{n,1}$, we have
\Bes
\begin{split}
& \ \ \ \ \ \ \ \int_{|t| \le 4(A+|\E \xi_{1}|)\ell_{n}^{-\frac 1\alpha}} \big|\alpha \mcl K_{\alpha}(t,N)-n K_{1}(t,N)\big| \dif t \\
& \ \le \ \frac{d_{\alpha}}{\alpha-1}\int_{|t| \le  4(A+|\E \xi_{1}|)\ell_{n}^{-\frac 1\alpha}} \frac{1}{|t|^{\alpha-1}} \dif t+n \int_{|t| \le 4(A+|\E \xi_{1}|)\ell_{n}^{-\frac 1\alpha}} \E \big|\zeta_{n,1}\big| \dif t \ \le \ C_{\alpha} n^{-\frac{2-\alpha}{\alpha}}.
\end{split}
\Ees
Now let us estimate
\Bes
\int_{|t| \ge 4(A+|\E \xi_{1}|)\ell_{n}^{-\frac 1\alpha}} \big|\alpha \mcl K_{\alpha}(t,N)-n K_{1}(t,N)\big| \dif t.
\Ees
For $t>4(A+|\E \xi_{1}|)\ell_{n}^{-\frac 1\alpha}$, we have   \ \ \
\Bes
\begin{split}
\alpha \mcl K_{\alpha}(t,N)-n K_{1}(t,N)\ =\  I_{1}-I_{2},
\end{split}
\Ees
where $b_{t}=\ell^{\frac 1\alpha}_{n} t+\E \xi_{1}$ and
\Bes
\begin{split}
& I_{1} \ = \ \frac{d_\alpha}{\alpha-1} {t^{1-\alpha}}-n \ell_n^{-\frac 1 \alpha}\left[\E\left(\xi_1 1_{\{\xi_1 \ge b_t\}}\right)-\PP\left(\xi_{1} \ge b_{t}\right) \E \xi_{1} \right], \\
& I_{2}\ =\ \frac{d_\alpha}{\alpha-1}{N^{1-\alpha}}-n \ell_n^{-\frac 1 \alpha}\left[\E\left(\xi_1 1_{\{\xi_1 \ge b_N\}}\right)-\PP\left(\xi_{1} \ge b_{N}\right) \E \xi_{1} \right].
\end{split}
\Ees
By Lemma \ref{l:MomEst} and \eqref{e:PX1M}, we have
\ \ \
\Bes
\begin{split}
\E\left[\xi_1 1_{\{\xi_1 > b_t\}}\right]
 \ = \  b_{t} \PP\left(\xi_1 > b_t\right)+\int_{b_t}^\infty \PP(\xi_1 > r) \dif r
\ =\ \frac{\alpha \theta}{2(\alpha-1)}  b_{t}^{1-\alpha}+r_{t} ,
\end{split}
\Ees
where $r_{t}$ is defined by \eqref{e:rt}.
Therefore,
\ \ \
\Bes
\begin{split}
I_{1} &\ =\ \frac{d_\alpha}{\alpha-1}t^{1-\alpha}-\frac{\alpha \theta n \ell_n^{-\frac 1\alpha} }{2(\alpha-1)} b^{1-\alpha}_{t}- n \ell_n^{-\frac 1\alpha} r_{t}+n \ell_n^{-\frac 1 \alpha}\PP\left(\xi_{1} > b_{t}\right) \E \xi_{1} \\
&\ =\ \frac{d_\alpha}{\alpha-1} t^{1-\alpha}-\frac{d_\alpha}{\alpha-1}t^{1-\alpha}\left(1+t^{-1}\ell_n^{-\frac 1\alpha} \E \xi_{1}\right)^{1-\alpha}-n \ell_n^{-\frac 1\alpha}  r_{t}+n \ell_{n}^{-\frac 1\alpha}R_t
\end{split}
\Ees
where $R_{t}$ is defined by \eqref{e:Rt}.
\vskip 2mm
As $t > 4(A+|\E \xi_{1}|)\ell_{n}^{-\frac 1\alpha}$, we have $|t^{-1}\ell_n^{-\frac 1\alpha} \E \xi_{1}| \le \frac 14$. By Taylor expansion $|(1+x)^{1-\alpha}-1| \le 4 x$ with $|x| \le \frac 14$ and the easy fact $|R_{t}| \le C t^{-\alpha} n^{-1}$, we get
\ \ \ \
$$|I_{1}| \ \le \ C_{\alpha} \left(t^{-\alpha} n^{-\frac 1\alpha}
+n^{1-\frac 1\alpha} |r_{t}|\right).$$
Similarly, we have
\ \
\Bes
|I_{2}| \ \le \ C_{\alpha} \left(N^{-\alpha} n^{-\frac 1\alpha}
+n^{1-\frac 1\alpha} |r_{N}|\right).
\Ees
Hence,
\Bes
\begin{split}
& \ \ \ \  \int_{t>4(A+|\E \xi_{1}|)\ell_{n}^{-\frac 1\alpha}} \left|\frac{d_\alpha}{\alpha-1} \left(\frac{1}{|t|^{\alpha-1}}-\frac{1}{N^{\alpha-1}}\right)-n  K_{1}(t,N)\right| \dif t \\
& \ \le \ C_{\alpha} \left(n^{-\frac{2-\alpha}{\alpha}}+N^{1-\alpha} n^{-\frac 1\alpha}+n^{1-\frac 1\alpha}\int_{t>4(A+|\E \xi_{1}|)\ell_{n}^{-\frac 1\alpha}} |r_{t}| \dif t+n^{1-\frac 1\alpha} N |r_{N}| \right).
\end{split}
\Ees
By the same argument, we get
\Bes
\begin{split}
& \ \ \ \  \int_{t<-4(A+|\E \xi_{1}|)\ell_{n}^{-\frac 1\alpha}} \left|\frac{d_\alpha}{\alpha-1} \left(\frac{1}{|t|^{\alpha-1}}-\frac{1}{N^{\alpha-1}}\right)-n  K_{1}(t,N)\right| \dif t \\
& \ \le \ C_{\alpha} \left(n^{-\frac{2-\alpha}{\alpha}}+N^{1-\alpha} n^{-\frac 1\alpha}+n^{1-\frac 1\alpha}\int_{t<-4(A+|\E \xi_{1}|)\ell_{n}^{-\frac 1\alpha}} |r_{t}| \dif t+n^{1-\frac 1\alpha} N |r_{N}| \right).
\end{split}
\Ees
Combining the previous two inequalities, we get the inequality in the corollary.
\end{proof}

\section{Proofs of Lemma \ref{l:FRep} and Propositions \ref{p:SolnF} and \ref{p:SolnF-1}}  \label{s:MPropProof}
Before proving the lemma and propositions, we first list some well known results about symmetric $\alpha$-stable process and $\Delta^{\alpha/2}$ that we shall use. 
It is easy to verify by the definition of $\Delta^{\alpha/2}$ that if $z=x-y$, then
\
\Be \label{e:Delx-y}
\Delta^{\alpha/2}_x f(x-y)\ =\ \Delta^{\alpha/2}_y f(x-y)\ =\ \Delta^{\alpha/2}_z f(z),
\Ee
where $\Delta^{\alpha/2}_x$ means that the operator $\Delta^{\alpha/2}$ acts on the variable $x$. Similarly, for $z= cx$ for some constant $c \in \R$, we have \
\Be \label{e:DelLin}
\Delta^{\alpha/2}_x f(cx)\ =\ |c|^\alpha \Delta^{\alpha/2}_z f(z).
\Ee
Recall that $p(t,x)$ is the transition probability density of standard symmetric $\alpha$-stable process $Z_{t}$, it is well known that
\ \ \ \ \
\Be \label{e:Scal}
p(t,x)=t^{-1/\alpha} p\left(1,t^{-1/\alpha} x\right), \ \ \ \ \ \ \ \ \ \ t>0, \ x \in \R.
\Ee
 We have the following estimate:
\ \ \
\begin{lem}  \label{l:HKEst}
Let $p(1,x)$ be the transition probability density of $Z_1$, we have
 \Bes
 \begin{split}
& \left|\p_x p(1,x)\right| \le \frac{1}{\alpha\pi}, \ \ \ \ \ \ \ \left|\p_x p(1,x)\right| \le \frac{2\alpha+1}{\pi} \frac1{x^{2}};\\
& \left|\p^{2}_x p(1,x)\right| \le \frac{2}{\alpha\pi}, \ \ \ \ \ \ \ \left|\p^{2}_x p(1,x)\right| \le  \frac{2\alpha+6}{\pi} \frac{1}{x^{2}}.
\end{split}
\Ees
\end{lem}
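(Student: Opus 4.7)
The plan is to work from the Fourier inversion formula. Since the characteristic function of $Z_{1}$ is $e^{-|\lambda|^{\alpha}}$, which is even and in $L^{1}(\R)$, one has
\Bes
p(1,x)\ =\ \frac{1}{\pi}\int_{0}^{\infty}\cos(\lambda x)\, e^{-\lambda^{\alpha}}\,\dif\lambda,
\Ees
and the super-polynomial decay of $e^{-\lambda^{\alpha}}$ legitimates differentiating twice under the integral sign:
\Bes
\p_{x}p(1,x) \ =\ -\frac{1}{\pi}\int_{0}^{\infty}\lambda\sin(\lambda x)\,e^{-\lambda^{\alpha}}\,\dif\lambda,\qquad \p_{x}^{2}p(1,x) \ =\ -\frac{1}{\pi}\int_{0}^{\infty}\lambda^{2}\cos(\lambda x)\,e^{-\lambda^{\alpha}}\,\dif\lambda.
\Ees
All four estimates of the lemma will be read off from these two integral representations.

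For the two uniform-in-$x$ bounds, the trigonometric factor is dominated by $1$ and the substitution $u=\lambda^{\alpha}$ turns $\int_{0}^{\infty}\lambda^{k}e^{-\lambda^{\alpha}}\dif\lambda$ into $\Gamma((k+1)/\alpha)/\alpha$. For $k=1$ this gives $\Gamma(2/\alpha)/\alpha$; since $2/\alpha\in(1,2)$ and $\Gamma$ is convex on $[1,2]$ with $\Gamma(1)=\Gamma(2)=1$, one has $\Gamma(2/\alpha)\le 1$, yielding $|\p_{x}p(1,x)|\le 1/(\alpha\pi)$. For $k=2$ this gives $\Gamma(3/\alpha)/\alpha$; since $3/\alpha\in(3/2,3)$ and $\Gamma$ is increasing on $[3/2,\infty)$ with $\Gamma(3)=2$, one has $\Gamma(3/\alpha)\le 2$, yielding $|\p_{x}^{2}p(1,x)|\le 2/(\alpha\pi)$.

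For the $1/x^{2}$-decay bounds, the strategy is to integrate by parts twice in $\lambda$, each pass trading one factor of $\lambda$ inside the integrand for one factor of $1/x$ outside. Setting $g(\lambda)=\lambda e^{-\lambda^{\alpha}}$ and $h(\lambda)=\lambda^{2}e^{-\lambda^{\alpha}}$, every boundary term vanishes---at $\lambda=0$ either the polynomial prefactor or the accompanying trigonometric factor is zero, and at $\lambda=\infty$ the exponential kills everything---so one arrives at
\Bes
\int_{0}^{\infty}g(\lambda)\sin(\lambda x)\,\dif\lambda \ =\ -\frac{1}{x^{2}}\int_{0}^{\infty}g''(\lambda)\sin(\lambda x)\,\dif\lambda,
\Ees
\Bes
\int_{0}^{\infty}h(\lambda)\cos(\lambda x)\,\dif\lambda \ =\ -\frac{1}{x^{2}}\int_{0}^{\infty}h''(\lambda)\cos(\lambda x)\,\dif\lambda.
\Ees
Direct computation yields $g''(\lambda)=\alpha\lambda^{\alpha-1}e^{-\lambda^{\alpha}}[\alpha\lambda^{\alpha}-(\alpha+1)]$ and $h''(\lambda)=e^{-\lambda^{\alpha}}[2-\alpha(\alpha+3)\lambda^{\alpha}+\alpha^{2}\lambda^{2\alpha}]$. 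Bounding the trigonometric factors by $1$ and reusing $\int_{0}^{\infty}\lambda^{k}e^{-\lambda^{\alpha}}\dif\lambda=\Gamma((k+1)/\alpha)/\alpha$, the $g''$ integral collapses to $(\alpha+1)+\alpha=2\alpha+1$ via $\int\lambda^{\alpha-1}e^{-\lambda^{\alpha}}=\int\lambda^{2\alpha-1}e^{-\lambda^{\alpha}}=1/\alpha$, while the $h''$ integral assembles, after invoking $\Gamma(1+y)=y\Gamma(y)$, to $(2\alpha+6)\Gamma(1+1/\alpha)$; and $\Gamma(1+1/\alpha)\le 1$ since $1+1/\alpha\in(3/2,2)$ and $\Gamma\le 1$ on $[1,2]$, leaving the constant $2\alpha+6$.

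The main obstacle is purely bookkeeping: writing out $g''$ and $h''$ correctly, verifying that every boundary term in the two integration-by-parts passes genuinely vanishes (which is clear from the forms of $g,h$ for $\alpha>1$), and collecting the various Gamma constants into the claimed clean values $2\alpha+1$ and $2\alpha+6$. No deep analytic difficulty is anticipated, because the Fourier representation sidesteps the principal-value definition of $\Delta^{\alpha/2}$ and every integral that appears is absolutely convergent.
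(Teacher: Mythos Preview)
Your proposal is correct and follows essentially the same approach as the paper: Fourier inversion, the trivial bound $|\sin|,|\cos|\le 1$ together with $\int_0^\infty \lambda^k e^{-\lambda^\alpha}\dif\lambda=\Gamma((k+1)/\alpha)/\alpha$ for the uniform estimates, and two integrations by parts in $\lambda$ for the $1/x^2$ estimates. The only difference is organizational: the paper introduces the auxiliary functions $I_\theta(x)=\int_0^\infty \lambda^\theta e^{-\lambda^\alpha}\cos(\lambda x)\,\dif\lambda$ and $J_\theta(x)=\int_0^\infty \lambda^\theta e^{-\lambda^\alpha}\sin(\lambda x)\,\dif\lambda$ and expresses the double integration by parts as identities among these (e.g.\ $I_2=-\tfrac{2}{x^2}I_0+\tfrac{\alpha(\alpha+3)}{x^2}I_\alpha-\tfrac{\alpha^2}{x^2}I_{2\alpha}$), whereas you compute $g''$ and $h''$ directly---these are the same calculation in different notation.
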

\begin{proof}
The proof is based on the inverse Fourier transform and will be given in the appendix.
\end{proof}
\begin{rem}
A sharp heat kernel estimate of $p(1,x)$ is as the following  (\cite[(1.3)]{ChZh16} and \cite[(2.11)]{ChZh16}):
\Be \label{e:HKEst-2}
\p^k_x p(1,x) \ \le  \ \frac{C_{k,\alpha} }{\left(1+|x|\right)^{\alpha+1+k}}, \ \ \ \ \ k \in \N \cup \{0\},
\Ee
but exact values of the above constants $C_{k,\alpha}$ are often difficult to be found. See \cite{ChKu03} for more details about heat kernel estimates of stable type processes.
\end{rem}
\vskip 4mm

\subsection{Proof of Lemma \ref{l:FRep}}
\begin{proof} [Proof of Lemma \ref{l:FRep}]
	Note that $\mu$ has a density $p(1,x)$, by the property $p(t,x)=t^{-1/\alpha} p(1, t^{-1/\alpha} x)$ and a change of variable, we have
	\ \ \
	\Be
	\begin{split}
		& \ \ \ \ \ \ \int_{-\infty}^\infty p\left(1-e^{-t}, y-e^{-\frac t{  \alpha}} x\right) (h(y)-\mu(h)) \dif y \\
		& \ = \ \int_{\R} p\left(1-e^{-t}, y-e^{-\frac t\alpha} x\right) h(y) \dif y-\int_{\R} p(1, y) h(y) \dif y \\
		& \ = \ \int_{\R} p(1-e^{-t}, y) h\left(y+e^{-\frac t\alpha} x\right) \dif y-\int_{\R} p(1, y) h(y) \dif y \\
		& \ = \ \int_{\R} p(1, y) h\left((1-e^{-t})^{\frac 1\alpha}y+e^{-\frac t\alpha} x\right) \dif y-\int_{\R} p(1, y) h(y) \dif y.
	\end{split}
	\Ee
	This implies
	\ \ \
	\Bes
	\begin{split}
		\left|\int_{-\infty}^\infty p\left(1-e^{-t}, y-e^{-\frac t{  \alpha}} x\right) (h(y)-\mu(h)) \dif y\right|& \ \le \ C_\alpha \|h'\| e^{-\frac t \alpha} \left(|x|+\int_{\R} |y| p(1,y ) \dif y\right) \\
		& \ \le\ C_\alpha \|h'\| e^{-\frac t \alpha} \left(|x|+1\right),
	\end{split}
	\Ees
	and hence $f(x)$ is well defined for all $x \in \R$.
	
	By Fubini theorem, we have
	\Be \label{e:FracF}
	\begin{split}
		\Delta^{\frac \alpha 2} f(x) \ = \ -\int_0^\infty \int_{-\infty}^\infty \Delta^{\frac \alpha 2}_x p\left(1-e^{-t}, y-e^{-\frac t{  \alpha}} x\right) (h(y)-\mu(h)) \dif y \dif t.
	\end{split}
	\Ee
	On the other hand, denote $s=1-e^{-t}$ and $z=y-e^{-\frac t{  \alpha}} x$, we have
	\ \
	\Bes
	\begin{split}
		\frac{\dif}{\dif t} p\left(1-e^{-t}, y-e^{-\frac t{  \alpha}} x\right)&\ = \ e^{-t}\p_s p\left(1-e^{-t}, y-e^{-\frac t{  \alpha}} x\right)+\frac 1{\alpha} e^{-\frac t\alpha} x\p_z p\left(1-e^{-t}, y-e^{-\frac t{  \alpha}} x\right) \\
		& \ = \ e^{-t}\Delta^{\frac \alpha 2}_z p\left(1-e^{-t}, y-e^{-\frac t{  \alpha}} x\right)+\frac 1{\alpha} e^{-\frac t\alpha} x\p_z p\left(1-e^{-t}, y-e^{-\frac t{  \alpha}} x\right) \\
		& \ = \ \Delta^{\frac \alpha 2}_x p\left(1-e^{-t}, y-e^{-\frac t{  \alpha}} x\right)-\frac x{\alpha} \p_x p\left(1-e^{-t}, y-e^{-\frac t{  \alpha}} x\right),
	\end{split}
	\Ees
	where the second equality is by \eqref{e:ptEq} and the third one is by
	\eqref{e:Delx-y} and \eqref{e:DelLin}. Substituting the previous relation into \eqref{e:FracF}, we get
	\Be \label{e:FracFInt}
	\begin{split}
			\Delta^{\frac \alpha 2} f(x) &\ = \ -\int_0^\infty \int_{-\infty}^\infty \frac{\dif}{\dif t} p\left(1-e^{-t}, y-e^{-\frac t{  \alpha}} x\right) (h(y)-\mu(h)) \dif y \dif t \\
			&\ \ \ \ \ \ \ \ \ -\int_0^\infty \int_{-\infty}^\infty \frac x{\alpha} \p_x p\left(1-e^{-t}, y-e^{-\frac t{  \alpha}} x\right) (h(y)-\mu(h)) \dif y \dif t \\
			&\ = \ -\int_0^\infty \int_{-\infty}^\infty \frac{\dif}{\dif t} p\left(1-e^{-t}, y-e^{-\frac t{  \alpha}} x\right) (h(y)-\mu(h)) \dif y \dif t+\frac x{\alpha} f'(x).
	\end{split}
	\Ee
By \eqref{e:ptEq}, Fubini Theorem and a straightforward calculation, we get
\ \
\Be
-\int_0^\infty \int_{-\infty}^\infty \frac{\dif}{\dif t} p\left(1-e^{-t}, y-e^{-\frac t{  \alpha}} x\right) (h(y)-\mu(h)) \dif y \dif t\ = \ h(x)-\mu(h).
\Ee
Hence, $f(x)$ solves Eq. \eqref{e:StEq}.
\end{proof}

\subsection{Proof of Proposition \ref{p:SolnF}}
In this and the next subsections, we shall often exchange differential operators and integrals without detailed proofs, since the exchangeability can be justified by a standard argument thanks to Lemma \ref{l:HKEst}.

\begin{lem} \label{l:PRep}
The density of $X_t(x)$ is $p\left( 1-e^{-t}, y-e^{-\frac t{\alpha  }} x\right)$ where $p(t,x)$ is the probability density function determined by Eq. \eqref{e:FT}.
\end{lem}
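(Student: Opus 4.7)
The plan is to solve the SDE \eqref{e:OU} explicitly by the variation of constants formula, recognize the resulting stochastic integral as an $\alpha$-stable integral, and compute its characteristic function directly using the Lévy--Khintchine formula for $(Z_t)$ together with the scaling identity \eqref{e:FT}.

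First, I would apply Itô's formula to $e^{t/\alpha} X_t$ (the drift being linear, the jump part contributes nothing extra beyond the dZ term) to obtain the closed form
\[
X_t(x) \;=\; e^{-t/\alpha}\, x \;+\; \int_0^t e^{-(t-s)/\alpha}\, dZ_s \;=:\; e^{-t/\alpha}\,x + Y_t.
\]
Since the deterministic shift $e^{-t/\alpha}x$ only translates the density, it suffices to show that $Y_t$ has density $p(1-e^{-t}, \cdot)$.

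Next, I would compute the characteristic function of $Y_t$. Approximating $Y_t$ by Riemann--Stieltjes sums $\sum_k e^{-(t-s_k)/\alpha}(Z_{s_{k+1}}-Z_{s_k})$ and using independence of increments together with $\mathbb{E}[e^{i\lambda (Z_{s_{k+1}}-Z_{s_k})}] = e^{-(s_{k+1}-s_k)|\lambda|^\alpha}$ from \eqref{e:FT}, passing to the limit gives
\[
\mathbb{E}[e^{i\lambda Y_t}] \;=\; \exp\!\left(-\int_0^t \bigl|\lambda e^{-(t-s)/\alpha}\bigr|^{\alpha}\, ds\right) \;=\; \exp\!\left(-|\lambda|^{\alpha}\int_0^t e^{-(t-s)}\, ds\right) \;=\; e^{-|\lambda|^{\alpha}(1-e^{-t})}.
\]
By the uniqueness of the Fourier inversion noted after \eqref{e:FT}, this identifies the law of $Y_t$ as the one with density $p(1-e^{-t}, y)$. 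Hence $X_t(x)=e^{-t/\alpha}x+Y_t$ has density $p(1-e^{-t},\, y-e^{-t/\alpha}x)$, as claimed.

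The only mildly delicate step is justifying the characteristic-function computation for the stable stochastic integral, i.e.\ the passage to the limit in the Riemann sums; this is standard for integrands of bounded variation against a Lévy process with finite exponential moments of its characteristic function, so it can be quoted from the theory of stable integrals rather than re-proved. Everything else is an algebraic check, and the scaling relation \eqref{e:Scal} is not even needed here since we land directly on $p(1-e^{-t},\cdot)$.
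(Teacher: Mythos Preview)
Your argument is correct, but it takes a genuinely different route from the paper. The paper proceeds analytically: it defines $Q_t f(x)=\int_{\R} p(1-e^{-t},y-e^{-t/\alpha}x)f(y)\,\dif y$ and checks by direct differentiation, using \eqref{e:ptEq} together with the scaling identities \eqref{e:Delx-y}--\eqref{e:DelLin}, that $Q_t f$ solves the Kolmogorov backward equation $\partial_t Q_t f=\Delta^{\alpha/2}Q_t f-\tfrac{1}{\alpha}x(Q_t f)'$ with $Q_0 f=f$; uniqueness for this equation then forces $Q_t f(x)=\E[f(X_t(x))]$ and hence identifies the density. Your approach is instead probabilistic and more direct: you solve the linear SDE by variation of constants to get $X_t(x)=e^{-t/\alpha}x+\int_0^t e^{-(t-s)/\alpha}\,\dif Z_s$ and read off the law of the stable integral from its characteristic function. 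Your method avoids the PDE machinery and the uniqueness appeal for the backward equation, at the mild cost of invoking the standard formula for the characteristic function of $\int_0^t g(s)\,\dif Z_s$ with deterministic $g$; the paper's method, by contrast, never writes down the explicit solution of the SDE but instead exercises the operator identities that are reused elsewhere in Section~6. Both are perfectly valid; yours is arguably the more elementary and self-contained proof of this particular lemma.
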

\begin{proof}
For $f \in \mcl S(\R,\R)$, define
$$Q_t f(x)\ =\ \int_{-\infty}^\infty p\left( 1-e^{-t }, y-e^{-\frac t{\alpha  }} x\right) f(y) \dif y, \ \ \ t>0.$$
We shall show that
\ \
\Be \label{e:KolEqn}
\p_t Q_t f(x)\ =\ \Delta^{\alpha/2} Q_t f(x)-\frac{1}{\alpha} x (Q_t f)'(x), \ \ \ \ Q_0 f(x)=f(x).
\Ee
Note that Eq. \eqref{e:KolEqn} is the Kolmogorov backward equation associated to SDE \eqref{e:OU}, which admits a unique solution with the form
$$Q_t f(x)\ =\ \E[f(X_t(x))].$$
Since $f \in \mcl S(\R,\R)$ is arbitrary, the probability of $X_t(x)$ has a density function as in the lemma.

It remains to prove Eq. \eqref{e:KolEqn}. $Q_0 f(x)=f(x)$ is obvious, let us now show the equation. Denote $s= 1-e^{-t}$ and $z=y-e^{-\frac t{\alpha  }} x$,  we have
\ \ \
\Be
\begin{split}
\p_t Q_t f(x)&\ =\ \p_t \int_{-\infty}^\infty p\left(s, z\right) f(y) \dif y \\
&\ =\ \int_{-\infty}^\infty \p_t p\left(s, z\right) f(y) \dif y \\
&\ =\ \int_{-\infty}^\infty e^{-t}\p_s p\left(s, z\right) f(y) \dif y\ +\ \frac{1}{\alpha  } x e^{-\frac{t}{\alpha  }}\int_{-\infty}^\infty \p_z p\left(s, z\right) f(y) \dif y.
\end{split}
\Ee
On the one hand, by \eqref{e:ptEq}, we have
\ \ \ \
\Be \label{e:PsPsz}
\begin{split}
\p_s p\left(s, z\right)&\ =\ \Delta_z^{\alpha/2} p\left(s,z\right) \\
&\ =\ d_\alpha \int_{\R} \frac{p(s,z+u)-p(s,z)}{|u|^{1+\alpha}} \dif u \\
&\ =\ d_\alpha \int_{\R} \frac{p(s,y-e^{-\frac{t}{\alpha  }} x+u)-p(s,y-e^{-\frac{t}{\alpha  }} x)}{|u|^{1+\alpha}} \dif u \\
&\ =\  e^{t} d_\alpha \int_{\R}  \frac{p(s,y-e^{-\frac{t}{\alpha  }} (x+\tl u))-p(s,y-e^{-\frac{t}{\alpha  }} x)}{|\tl u|^{1+\alpha}} \dif \tl u \\
&\ =\ e^{t} \Delta^{\alpha/2}_x p(s,y-e^{-\frac{t}{\alpha  }} x) \\
&\ =\ e^{t} \Delta^{\alpha/2}_x p(s, z),
\end{split}
\Ee
where the fourth equality is by taking $\tl u=-e^{\frac{t}{\alpha  }} u$.

On the other hand, it is easy to check
\ \ \ \
\Bes
\begin{split}
e^{-\frac{t}{\alpha  }} \p_z p\left(s, z\right)\ =\ -\p_x p\left(s, z\right).
\end{split}
\Ees
Combing the previous three relations, we immediately obtain
\ \ \
\Be
\begin{split}
\p_t Q_t f(x)&\ =\ \int_{-\infty}^\infty \Delta^{\alpha/2}_x p\left(s, z\right) f(y) \dif y-\frac{1}{\alpha  } x \int_{-\infty}^\infty \p_x p\left(s, z\right) f(y) \dif y \\
&\ =\ \Delta^{\alpha/2}_x \int_{-\infty}^\infty  p\left(s, z\right) f(y) \dif y-\frac{1}{\alpha  } x \p_x \int_{-\infty}^\infty  p\left(s, z\right) f(y) \dif y \\
&\ =\ \Delta^{\alpha/2} Q_t f(x)-\frac{1}{\alpha  } x \p_x Q_t f(x).
\end{split}
\Ee
\end{proof}

\begin{proof} [{\bf Proof of Proposition \ref{p:SolnF}}]
By Lemma \ref{l:FRep}, we have
\ \ \
\Be  \label{e:FRep-1}
f(x)\ =\ -\int_0^\infty \int_{-\infty}^\infty p\left(1-e^{-t}, y-e^{-\frac t{  \alpha}} x\right) (h(y)-\mu(h)) \dif y \dif t.
\Ee
Denote $s= 1-e^{-t}$ and $z=y-e^{-\frac{t}{\alpha  }} x$, it is easy to check
$$\p_x p(s,z)\ =\ -e^{-\frac{t}{\alpha  }} \p_z p(s,z), \ \ \ \ \ \p_y p(s,z)\ =\ \p_z p(s,z).$$
We have
\ \ \ \ \
\Be  \label{e:f'Rep}
\begin{split}
f'(x)&\ =\ \int_0^\infty \int_{-\infty}^\infty \p_x p\left(s,z\right) (h(y)-\mu(h)) \dif y \dif t \\
&\ =\ -\int_0^\infty \int_{-\infty}^\infty e^{-\frac{t}{\alpha  }} \p_y p\left(s, z\right) (h(y)-\mu(h)) \dif y \dif t\\
&\ =\ \int_0^\infty \int_{-\infty}^\infty e^{-\frac{t}{\alpha  }}  p\left(s, z\right) h'(y) \dif y \dif t.
\end{split}
\Ee
Therefore,
\Be
\begin{split}
\|f'\| & \ \le \ \|h'\| \int_0^\infty e^{-\frac{t}{\alpha  }} \int_{-\infty}^\infty   p\left(s,z\right) \dif y \dif t \\
& \ =\ \|h'\| \int_0^\infty e^{-\frac{t}{\alpha  }} \int_{-\infty}^\infty   p\left(s,z\right) \dif z \dif t \ = \ \alpha   \|h'\|.
\end{split}
\Ee
We further have
\Be
\begin{split}
f''(x)
\ =\ -\int_0^\infty \int_{-\infty}^\infty e^{-\frac{2t}{\alpha  }}  \p_z p\left(s,z\right) h'(y) \dif y \dif t.
\end{split}
\Ee
Thanks to the property $p(s,z)=s^{-1/\alpha} p(s^{-/\alpha} z)$ with $p(x)=p(1,x)$ for $x \in \R$, we have
\ \ \
\Be  \label{e:f''Rep}
\begin{split}
\|f''\| & \ \le \ \|h'\| \int_0^\infty s^{-{1\over\alpha}} e^{-\frac{2t}{\alpha  }} \int_{-\infty}^\infty  s^{-{1\over\alpha}}\left|p'\left(s^{-{1\over\alpha}}z\right)\right| \dif y \dif t.
\end{split}
\Ee
Setting $u=s^{-1/\alpha} z$ and applying the two estimates of $p'(x)$ in Lemma \ref{l:HKEst} to the two integrals $\int_{|u| \le \sqrt{\alpha(2\alpha+1)}}$ and $\int_{|u|>\sqrt{\alpha(2\alpha+1)}}$ below , we have
\ \
\Be  \label{e:Intp'Est}
\begin{split}
\int_{-\infty}^\infty  s^{-{1\over\alpha}}\left|p'\left(s^{-{1\over\alpha}}z\right)\right| \dif y
&\ = \ \int_{-\infty}^\infty  \left|p'\left(u\right)\right| \dif u \\
& \ = \ \int_{|u| \le \sqrt{\alpha(2\alpha+1)}} \frac{1}{\alpha \pi} \dif u+\int_{|u|>\sqrt{\alpha(2\alpha+1)}} \frac{2\alpha+1}{\pi u^{2}}  \dif u   \ \le\ \frac 4 \pi \sqrt{\frac{2\alpha+1}{\alpha}}.
\end{split}
\Ee
Hence,
\ \ \
\Be
\begin{split}
\|f''\| & \ \le \  \frac 4 \pi \sqrt{\frac{2\alpha+1}{\alpha}}\int_0^\infty s^{-{1\over\alpha}} e^{-\frac{2t}{\alpha  }} \dif t \|h'\|\ = \  \frac 4 \pi \sqrt{\frac{2\alpha+1}{\alpha}}{\rm B}\big(\frac{\alpha-1}\alpha,\frac 2\alpha\big) \|h'\|,
\end{split}
\Ee
where the last equality is by the change of variable $u=e^{-t}$.
\end{proof}
\subsection{Proof of Proposition \ref{p:SolnF-1}}
\begin{lem} \label{l:IBP}
Let $f \in \mcl C^2_b(\R,\R)$, the space of all second order differentiable functions with bounded zero, first, second-order derivatives. For any differentiable $h$ such that $\lim_{x \rightarrow \pm \infty} f(x) h(x)=0$, we have
\Be
\int_{-\infty}^\infty \Delta^{\frac \alpha2} f(x) h(x) \dif x\ =\ \int_{-\infty}^{\infty} \mcl I(f)(x) h'(x)\dif x,
\Ee
where
$$\mcl I(f)(x)\ =\ -\frac{d_{\alpha}}{\alpha}\int_{-\infty}^{\infty} \frac{f(x+w)-f(x)}{{\rm sgn}(w)|w|^{\alpha}}  \dif w.$$
\end{lem}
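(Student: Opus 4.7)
The natural strategy is to exploit the alternative representation \eqref{e:FracF-0} of $\Delta^{\alpha/2}f$ established in Lemma \ref{l:Main2}, namely
\[
\Delta^{\frac{\alpha}{2}} f(x) \;=\; \frac{d_\alpha}{\alpha} \int_{-\infty}^{\infty} \frac{f'(x+z)-f'(x)}{{\rm sgn}(z)|z|^{\alpha}}\, \dif z,
\]
which expresses $\Delta^{\alpha/2}f$ entirely in terms of $f'$ (not $f''$) and is therefore ideal for performing exactly one integration by parts in the space variable. Plug this formula into the left-hand side of the claimed identity and apply Fubini's theorem to swap the $x$- and $z$-integrals, obtaining
\[
\int_{-\infty}^{\infty} \Delta^{\frac{\alpha}{2}} f(x)\, h(x)\, \dif x \;=\; \frac{d_\alpha}{\alpha} \int_{-\infty}^{\infty} \frac{1}{{\rm sgn}(z)|z|^{\alpha}} \left(\int_{-\infty}^{\infty} [f'(x+z)-f'(x)]\, h(x)\, \dif x\right) \dif z.
\]

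For each fixed $z$, I would then integrate by parts in $x$ via the observation $f'(x+z)-f'(x) = \tfrac{\dif}{\dif x}[f(x+z)-f(x)]$, giving
\[
\int_{-\infty}^{\infty} [f'(x+z)-f'(x)]\, h(x)\, \dif x \;=\; \bigl[h(x)(f(x+z)-f(x))\bigr]_{-\infty}^{\infty} - \int_{-\infty}^{\infty} [f(x+z)-f(x)]\, h'(x)\, \dif x.
\]
The boundary term vanishes: the hypothesis $\lim_{x\to\pm\infty} f(x)h(x)=0$ handles the $-f(x)h(x)$ piece directly, while for $f(x+z)h(x)$ a shift of variable $y=x+z$ combined with boundedness of $f$ and the assumed decay of $f\cdot h$ reduces the question to the same hypothesis. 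Substituting back and invoking Fubini once more to undo the earlier exchange identifies the remaining double integral:
\[
-\frac{d_\alpha}{\alpha}\int_{-\infty}^{\infty} h'(x) \int_{-\infty}^{\infty} \frac{f(x+z)-f(x)}{{\rm sgn}(z)|z|^{\alpha}}\, \dif z\, \dif x \;=\; \int_{-\infty}^{\infty}\mcl I(f)(x)\, h'(x)\, \dif x,
\]
which is precisely the desired identity.

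The main technical point to verify is the absolute integrability that underwrites both applications of Fubini. For this I would use the two-sided estimate $|f'(x+z)-f'(x)|\le \|f''\|\,|z|$ for $|z|\le 1$ and $|f'(x+z)-f'(x)|\le 2\|f'\|$ for $|z|>1$; dividing by $|z|^{\alpha}$ and integrating gives a finite constant uniformly in $x$ exactly because $\alpha\in(1,2)$ makes the kernel integrable both at $0$ (since $1-\alpha>-1$) and at infinity (since $-\alpha<-1$). Combined with whatever integrability of $h$ and $h'$ is implicit in the two sides of the identity being well-defined, this justifies all interchanges of integration and makes the argument rigorous; the result is then essentially the $\alpha$-stable analogue of the classical identity $\int (\Delta f)h \,\dif x = -\int \nabla f\cdot \nabla h\, \dif x$.
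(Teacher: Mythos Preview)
Your proposal is correct and follows essentially the same approach as the paper: the paper's proof also invokes the representation \eqref{e:FracF-0}, applies Fubini twice, and integrates by parts once in $x$, arriving at the result in two displayed lines. Your write-up is in fact more detailed than the paper's, which does not spell out the boundary-term discussion or the $|z|\le 1$ versus $|z|>1$ splitting that justifies Fubini.
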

\begin{proof}
Recalling \eqref{e:FracF-0}
$$\Delta^{\frac \alpha2} f(x) \ = \ \frac{d_{\alpha}}{\alpha}\int_{-\infty}^{\infty} \frac{f'(x+z)-f'(x)}{{\rm sgn}(z)|z|^{\alpha}}\dif z,$$
and using Fubini's Theorem two times and an integration by parts, we get
\Be
\begin{split}
\int_{-\infty}^\infty \Delta^{\frac \alpha2} f(x) h(x) \dif x & \ = \ \frac{d_{\alpha}}{\alpha}\int_{-\infty}^{\infty} \int_{-\infty}^{\infty} \frac{f'(x+z)-f'(x)}{{\rm sgn}(z)|z|^{\alpha}} h(x)\dif x  \dif z \\
&\ = \ -\frac{d_{\alpha}}{\alpha}\int_{-\infty}^{\infty} \int_{-\infty}^{\infty} \frac{f(x+z)-f(x)}{{\rm sgn}(z)|z|^{\alpha}}  \dif z h'(x)\dif x.
\end{split}
\Ee
The proof is complete.
\end{proof}

\begin{proof}[{\bf Proof of Proposition \ref{p:SolnF-1}}] Observe
\Be
\begin{split}
\Delta^{\frac{\alpha}2} f(x)-\Delta^{\frac{\alpha}2} f(y) \ = \ & \int_{0}^{\infty} \int_{-\infty}^{\infty} \Delta^{\frac \alpha2}_{x} p(1-e^{-t}, z-e^{-\frac t\alpha} x)(h(z)-\mu(h)) \dif z \dif t \\
& \ - \int_{0}^{\infty} \int_{-\infty}^{\infty}\Delta^{\frac \alpha2}_{y} p(1-e^{-t}, z-e^{-\frac t\alpha} y) (h(z)-\mu(h)) \dif z \dif t.
\end{split}
\Ee
Denote $s=1-e^{-t}$ and $p(x)=p(1,x)$, we have
\ \ \
\Be
p(s, z-e^{-\frac t\alpha} x)=s^{-\frac1\alpha} p\left(s^{-\frac1\alpha}(z-e^{-\frac t\alpha} x)\right).
\Ee
By \eqref{e:Delx-y} and \eqref{e:DelLin}, we have
\ \ \
\Bes
\begin{split}
\Delta^{\frac\alpha2}_{x} p(s,z-e^{-\frac t\alpha} x)&\ = \ s^{-\frac 1\alpha} \Delta^{\frac \alpha2}_{x} p\left(s^{-\frac1\alpha} (z-e^{-\frac t\alpha} x)\right)\ = \ s^{-\frac 1\alpha} e^{-t} \Delta^{\frac \alpha2}_{z} p\left(s^{-\frac1\alpha}(z-e^{-\frac t\alpha} x)\right).
\end{split}
\Ees
Hence, by Lemma \ref{l:IBP},
\ \ \
\Be
\begin{split}
& \ \ \ \ \ \ \ \ \  \int_{0}^{\infty} \int_{-\infty}^{\infty} \Delta^{\frac\alpha2}_{x} p(s, z-e^{-\frac t\alpha} x)(h(z)-\mu(h)) \dif z \dif t  \\
&\ =\ \int_{0}^{\infty} s^{-\frac 1\alpha} e^{-t}  \int_{-\infty}^{\infty} \Delta^{\frac \alpha2}_{z} p\left(s^{-\frac1\alpha}(z-e^{-\frac t\alpha} x)\right)(h(z)-\mu(h)) \dif z \dif t  \\
&\ =\ \int_{0}^{\infty} s^{-\frac1\alpha} e^{-t}  \int_{-\infty}^{\infty} \mcl I\left(p\left(s^{-\frac 1\alpha} (\cdot-e^{-\frac t\alpha} x)\right)\right)(z)h'(z) \dif z \dif t \\
&\ = \ \int_{0}^{\infty} s^{-1+\frac1 \alpha} e^{-t}  \int_{-\infty}^{\infty} \mcl I\left(p\left(\cdot-s^{-\frac 1\alpha} e^{-\frac t\alpha}x\right)\right)(z)h'(s^{\frac 1\alpha}z) \dif z \dif t,
 \end{split}
\Ee
where the last equality is by a change of variables on $z$ and the $w$ in $\mcl I$.
Similarly,
\ \ \
\Be
\begin{split}
& \ \ \ \ \ \ \ \ \  \int_{0}^{\infty} \int_{-\infty}^{\infty} \Delta^{\frac \alpha2}_{y} p(s,z-e^{-\frac t\alpha} y)(h(z)-\mu(h)) \dif z \dif t  \\
&\ =\ \int_{0}^{\infty}  s^{-1+\frac1 \alpha} e^{-t}  \int_{-\infty}^{\infty} \mcl I\left(p\left(\cdot-s^{-\frac 1\alpha} e^{-\frac t\alpha}y\right)\right)(z)h'(s^{\frac 1\alpha}z) \dif z \dif t.
 \end{split}
\Ee
Observe
\Bes
\begin{split}
& \ \ \ \ \ \ \ \frac{1}{|x-y|^\gamma}\left|\mcl I\left(p\left(\cdot-s^{-\frac 1\alpha} e^{-\frac t\alpha}x\right)\right)(z)-\mcl I\left(p\left(\cdot-s^{-\frac 1\alpha} e^{-\frac t\alpha}y\right)\right)(z)\right| \\
&\ =\ \frac{d_{\alpha}}{\alpha} \left|\int_{-\infty}^{\infty} \frac{\delta_{w}p\left(z-s^{-\frac 1\alpha} e^{-\frac t\alpha}x\right)-\delta_{w} p\left(z-s^{-\frac 1\alpha} e^{-\frac t\alpha}y\right)}{{\rm sgn}(w)|w|^{\alpha} |x-y|^\gamma}  \dif w\right| \\
&\ = \ \frac{d_{\alpha}}{\alpha} s^{-\frac{\gamma}{\alpha}} e^{-\frac{\gamma t}{\alpha}} \left|\int_{-\infty}^{\infty} \frac{\delta_{w}p\left(z-\tl x\right)-\delta_{w} p\left(z-\tl y\right)}{{\rm sgn}(w)|w|^{\alpha} |\tl x-\tl y|^\gamma}  \dif w\right|,
 \end{split}
\Ees
where $\delta_{w} p(z)=p(z+w)-p(z)$, $\tl x=s^{-\frac 1\alpha} e^{-\frac t\alpha}x$ and $\tl y=s^{-\frac 1\alpha} e^{-\frac t\alpha}y$.
Therefore, for any $x \neq y$,
\ \ \
\Be \label{e:IntP}
\begin{split}
& \ \ \ \ \ \ \ \frac{\left|\Delta^{\frac{\alpha}2} f(x)-\Delta^{\frac{\alpha}2} f(y)\right|}{|x-y|^\gamma} \\
&\ \le \ \frac{d_{\alpha} \|h'\|}{\alpha} \int_0^\infty s^{-\frac{\gamma+\alpha-1}{\alpha}} e^{-\frac{(\gamma+\alpha) t}{\alpha}} \int_{-\infty}^\infty \left|\int_{-\infty}^{\infty} \frac{\delta_{w}p\left(z-\tl x\right)-\delta_{w} p\left(z-\tl y\right)}{{\rm sgn}(w)|w|^{\alpha} |\tl x-\tl y|^\gamma}  \dif w\right| \dif z \dif t.
 \end{split}
\Ee
Let us
bound the integral above. When $|\tl x-\tl y| \le 1$, observe
\ \ \
$$
\delta_{w}p\left(z-\tl x\right)- \delta_{w} p\left(z-\tl y\right) \ = \ \int_{0}^{w} \int_{\tl x}^{\tl y} p''\left(z+r-a\right)\dif a\dif r,
$$
$$
\delta_{w}p\left(z-\tl x\right)- \delta_{w} p\left(z-\tl y\right) \ = \ \int_{\tl x}^{\tl y} \left(p'\left(z+w-a\right)-p'\left(z-a\right)\right)\dif a,
$$
we have
\ \ \ \
\Be \label{e:IntP-1}
\begin{split}
& \ \ \ \ \ \ \int_{-\infty}^\infty \left|\int_{-\infty}^{\infty} \frac{\delta_{w}p\left(z-\tl x\right)-\delta_{w} p\left(z-\tl y\right)}{{\rm sgn}(w)|w|^{\alpha} |\tl x-\tl y|^\gamma}  \dif w\right| \dif z \\
& \ \le \  \int_{-\infty}^\infty \int_{|w| \le 1} \frac{1}{|w|^{\alpha} |\tl x-\tl y|^\gamma}  \left|\int_{0}^{w} \int_{\tl x}^{\tl y} |p''\left(z+r-a\right)|\dif a\dif r\right|  \dif w \dif z   \\
& \ \ +\int_{-\infty}^\infty \int_{|w|>1} \frac{1}{|w|^{\alpha} |\tl x-\tl y|^\gamma} \left|\int_{\tl x}^{\tl y} \left|p'\left(z+w-a\right)-p'\left(z-a\right)\right|\dif a\right|  \dif w \dif z.
 \end{split}
\Ee
Applying the two estimates of $\p^{2}_{x} p(1,x)$ in Lemma \ref{l:HKEst} to the two integrals $\int_{|z| \le \sqrt{\alpha(\alpha+3)}}$ and $\int_{|z|>\sqrt{\alpha(\alpha+3)}}$ below respectively, we obtain
\ \ \ \
\Be
\begin{split}
\int_{-\infty}^\infty  |p''\left(z+r-a\right)| \dif z & \ = \ \int_{-\infty}^\infty  |p''\left(z\right)| \dif z \\
&\ \le \ \int_{|z| \le \sqrt{\alpha(\alpha+3)}} \frac{2}{\alpha \pi} \dif z+\int_{|z|>\sqrt{\alpha(\alpha+3)}} \frac{2\alpha+6}{\pi z^{2}} \dif z \ \le\ \frac 8{\pi}\sqrt{\frac{\alpha+3}{\alpha}}.
\end{split}
\Ee
Applying the two estimates of $\p_{x} p(1,x)$ in Lemma \ref{l:HKEst} similarly, we obtain
\Bes
\begin{split}
\int_{-\infty}^\infty \left|p'\left(z+w-a\right)-p'\left(z-a\right)\right| \dif z \ & \le \ 2 \int_{-\infty}^\infty \left|p'(z)\right| \dif z \\
& \ = \ 2 \left(\int_{|z| \le \sqrt{\alpha(2\alpha+1)}}+\int_{|z|>\sqrt{\alpha(2\alpha+1)}}\right) \left|p'(z)\right| \dif z \\
& \ \le \ \frac{8}{\pi} \sqrt{\frac{2\alpha+1}{\alpha}}.
\end{split}
\Ees
Hence,  these two inequalities and \eqref{e:IntP-1}, together with Fubini's theorem, imply
\Be \label{e:IntP-2}
\begin{split}
& \ \ \ \ \ \ \int_{-\infty}^\infty \left|\int_{-\infty}^{\infty} \frac{\delta_{w}p\left(z-\tl x\right)-\delta_{w} p\left(z-\tl y\right)}{{\rm sgn}(w)|w|^{\alpha} |\tl x-\tl y|^\gamma}  \dif w\right| \dif z \\
& \ \le \  \frac 8{\pi}\sqrt{\frac{\alpha+3}{\alpha}} \int_{|w| \le 1} \frac{1}{|w|^{\alpha-1}} \dif w+\frac{8}{\pi} \sqrt{\frac{2\alpha+1}{\alpha}}\int_{|w|>1} \frac{1}{|w|^{\alpha}} \dif w \\
& \ \le \ \frac{16}{\pi(2-\alpha)}\sqrt{\frac{\alpha+3}{\alpha}}+\frac{16}{\pi(\alpha-1)} \sqrt{\frac{2\alpha+1}{\alpha}}, \ \ \ \ \ \ \ \ |\tl x-\tl y| \le 1.
 \end{split}
\Ee
When $|\tl x-\tl y|>1$, observe
\ \ \
\Be
\begin{split}
  \delta_{w}p\left(z-\tl x\right)- \delta_{w} p\left(z-\tl y\right) \ = \ \int_{0}^{w} \left(p'\left(z+r-\tl x\right)-p'\left(z+r-\tl y\right)\right)\dif r,
 \end{split}
\Ee
we have
\ \ \ \
\Bes
\begin{split}
& \ \ \ \ \ \ \int_{-\infty}^\infty \left|\int_{-\infty}^{\infty} \frac{\delta_{w}p\left(z-\tl x\right)-\delta_{w} p\left(z-\tl y\right)}{{\rm sgn}(w)|w|^{\alpha} |\tl x-\tl y|^\gamma}  \dif w\right| \dif z \\
& \ \le \  \int_{-\infty}^\infty \int_{|w| \le 1} \frac{1}{|w|^{\alpha}} \left|\int_{0}^{w} \left|p'\left(z+r-\tl x\right)-p'\left(z+r-\tl y\right)\right|\dif r\right| \dif w \dif z   \\
& \ \ +\int_{-\infty}^\infty \int_{|w|>1} \frac{1}{|w|^{\alpha}} \left[\left|\delta_{w}p\left(z-\tl x\right)\right|+\left|\delta_{w} p\left(z-\tl y\right)\right|\right] \dif w \dif z.
 \end{split}
\Ees
By a similar argument as above and $\int_\R \left|\delta_{w}p\left(z-c\right)\right| \dif z \le 2$ for any $c \in \R$, we have
\ \ \ \
\Be \label{e:IntP-3}
\begin{split}
& \ \ \ \ \ \ \int_{-\infty}^\infty \left|\int_{-\infty}^{\infty} \frac{\delta_{w}p\left(z-\tl x\right)-\delta_{w} p\left(z-\tl y\right)}{{\rm sgn}(w)|w|^{\alpha} |\tl x-\tl y|^\gamma}  \dif w\right| \dif z \\
& \ \le \ \frac{8}{\pi} \sqrt{\frac{2\alpha+1}{\alpha}}\int_{|w| \le 1} \frac{1}{|w|^{\alpha-1}} \dif w+4 \int_{|w|>1} \frac{1}{|w|^{\alpha}}  \dif w \ \le \ \frac{16}{\pi(2-\alpha)} \sqrt{\frac{2\alpha+1}{\alpha}}+\frac{8}{\alpha-1}, \ \ \ \  |\tl x-\tl y|>1.
 \end{split}
\Ee
Combining \eqref{e:IntP}, \eqref{e:IntP-2} and \eqref{e:IntP-3}, we immediately obtain
\ \ \ \
\Bes 
\begin{split}
 \frac{\left|\Delta^{\frac{\alpha}2} f(x)-\Delta^{\frac{\alpha}2} f(y)\right|}{|x-y|^\gamma} & \ \le \ \frac{d_{\alpha}}{\alpha}\left[\frac{16}{\pi(2-\alpha)}\sqrt{\frac{\alpha+3}{\alpha}}+\frac{16}{\pi(\alpha-1)} \sqrt{\frac{2\alpha+1}{\alpha}}\right] \|h'\| \int_0^\infty s^{-\frac{\gamma+\alpha-1}{\alpha}} e^{-\frac{(\gamma+\alpha) t}{\alpha}}  \dif t \\
 & \ = \ \frac{d_{\alpha}}{\alpha}\left[\frac{16}{\pi(2-\alpha)}\sqrt{\frac{\alpha+3}{\alpha}}+\frac{16}{\pi(\alpha-1)} \sqrt{\frac{2\alpha+1}{\alpha}}\right] {\rm B}\big(\frac{1-\gamma}{\alpha}, \frac{\gamma+\alpha}{\alpha}\big) \|h'\|.
 \end{split}
\Ees
\end{proof}
\section{Appendix} \label{s:App}
\subsection{Example 4: An example in \cite{JuPa98}}

Let us assume that $\xi_{1},...,\xi_{n},...$ be a sequence of i.i.d. random variables. The authors of \cite{{JuPa98}} considered the following case: $\xi_1$ has a density function as
\ \ \ \
\Be  \label{e:plog}
p(x)\ =\ K_0 \frac{(\log|x|)^\beta}{|x|^{1+\alpha}} \ \ {\rm for} \  |x|>x_0, \ \ \ \  p(x)\ =\ 0 \ \ {\rm for} \ |x| \le x_0,
\Ee
where $K_0>0$, $x_0>0$, $\alpha \in (0,2)$ and $\beta \in \R$. It is easy to check that this example is out of the scope of Theorem \ref{t:MThm2} because the conditions (i') and (ii') are not satisfied.

By \cite[Proposition 1]{JuPa98}, we have $B_{n}=0$ and $A_{n}=n^{1/\alpha} h(n)$ with $h(n)=C\log^{\frac{\gamma}{\alpha}} \left(Cn^{\frac1\alpha} \log^{\frac{\gamma}{\alpha}} (n^{\frac1 \alpha}) \right)$ and as $n \rightarrow \infty$,
$$T_n/A_n \ \Rightarrow \ \nu,$$
 where $T_{n}=\xi_{1}+...+\xi_{n}$ and $\nu$ is a symmetric stable distribution with characteristic function $\exp\left(-\frac{\alpha |\lambda|^{\alpha}}{2d_\alpha}\right)$.
The following bound was proved in \cite[Proposition 1]{JuPa98}:
$$d_{{\rm Kol}}(\mcl L(T_n/A_{n}), \nu) \ = \ O\left((\log n)^{-1}\right),$$
whose proof heavily depends on the special form of \eqref{e:plog}.
Recall $S_{n}=\left(\frac{\alpha}{2d_{\alpha}}\right)^{-\frac 1\alpha}\frac{T_{n}}{A_{n}}$,
by Remark \ref{r:Scaling}, we have
\ \ \ \
\Be \label{e:dKolLog}
d_{{\rm Kol}}(\mcl L(S_n), \mu) \ = \ O((\log n)^{-1}),
\Ee
where $\mu$ is a symmetric stable distribution with characteristic function $e^{-|\lambda|^{\alpha}}$. Applying Theorem \ref{t:MainThm}, we can prove that if \eqref{e:plog} is satisfied with $\alpha \in (1,2)$, a convergence rate $O\left((\log n)^{-1+\frac 1\alpha}\right)$ can be obtained in $W_{1}$ distance.
 \ \ \

Here we consider a new example which is more complicated than \eqref{e:plog}, more precisely,
\Be  \label{e:Log}
\PP(|\xi_{1}|>x)\ =\ \frac{K_{0}(\log x)^{\beta}}{x^{\alpha}}, \ \ \ \ \ \ \ x >x_{0}.
\Ee
Note that $K_{0}$ and $x_{0}$ here may be different from those in \eqref{e:plog}.
The corresponding density function is
$$p(x)\ =\ \frac{K_{0} \left[\alpha \left(\log |x|\right)^\beta-\beta \left(\log |x|\right)^{\beta-1}\right]}{2|x|^{\alpha+1}}, \ \ \ |x|>x_{0}; \ \ \ \ p(x)\ =\ 0, \ \ \ |x| \le x_{0}.$$
 It seems that the method in \cite{{JuPa98}} can not deal with this example directly. However, by our first main result Theorem \ref{t:MainThm}, we can prove
 \Be  \label{e:DWSn}
 d_{{\rm W}}(\mcl L(S_n), \mu) \ \le \ C_{\alpha, \beta} (\log n)^{-1+\frac 1\alpha}.
\Ee
It can be seen from the proof that \eqref{e:DWSn} also holds under
the condition \eqref{e:plog} by a similar but simpler argument. Because the proof of \eqref{e:DWSn} under the condition \eqref{e:Log} is long, we only give the leading order of the convergence.
 \vskip 3mm

Let $L, A$ be two quantities with $A>0$, if there exist some $C>0$ (which may depend on some parameters) such that
$$|L| \ \le \ C A,$$
we denote $L=O(A)$.

By Theorem \ref{t:SLConvergence}, $A_n=\inf\left\{x>0: \PP\left(|\xi_{1}|>x\right) \le \frac 1n\right\}$ can be determined by
$\frac{K_0(\log A_n)^{\beta}}{A_n^{\alpha}}\ =\ \frac 1n$,
which gives
\ \ \ \
\Be  \label{e:NoverAn}
\frac{n}{A^{\alpha}_{n}}\ = \ \frac{1}{K_0 (\log A_n)^{\beta}}.
\Ee
It is easy to see $C_{\alpha,\beta} \ n^{\frac1 \alpha} \ \le \ A_n \ \le\  C_{\alpha,\beta} \ n^{\frac1 \alpha} (\log n)^{\frac{\beta}{\alpha}}$.
By the symmetry property, $B_n = n \E \left[\xi_{1} 1_{\{|\xi_{1}| \le A_n\}}\right]=0$.

 Now we apply Theorem \ref{t:MainThm} with $N=(\log A_n)^{\frac{1}{\alpha}}$ and
 $$\zeta_{n,i}=\frac 1{\tl A_n} \xi_{i} \ \ \ \ \ \ \ \ \ \ {\rm with} \ \ \ \tl A_n=(\frac{\alpha}{2d_{\alpha}})^{1/\alpha} A_n.$$
Let us first estimate the remainder term $\mcl R_{N,n}$. Let $\gamma=2-\alpha$, we get
\ \ \ \ \
\Bes
\frac 1n \sum_{i=1}^{n}\E|\zeta_{n,i}|^{2-\alpha} \le C_\alpha \ A_n^{-2+\alpha}.
\Ees
By Lemma \ref{l:MomEst}, we get
\ \ \ \
\Bes
\begin{split}
\sum_{i=1}^{n} \E\big(|\zeta_{n,i}|  1_{\{|\zeta_{n,i}|>N\}}\big)& \  = \ \frac{n}{\tl A_n} \E\big(|\xi_{1}|  1_{\{|\xi_1|>\tl A_n N\}}\big) \\
& \ = \ nN \PP\left(|\xi_1|>\tl A_n N\right)+\frac{n}{\tl A_n} \int_{\tl A_n N}^\infty \PP(|\xi_1|>r) \dif r.
\end{split}
\Ees
By \eqref{e:NoverAn}, $N=(\log A_n)^{\frac{1}{\alpha}}$ and $A_n \ge C_{\alpha,\beta} n^{\frac1 \alpha}$,
\Bes
\begin{split}
nN \PP\left(|\xi_1|>\tl A_n N\right) \ = \ \frac{2 d_{\alpha}N^{1-\alpha}}{\alpha} \left(\frac{\log (\tl A_n N)}{\log A_n}\right)^{\beta} \ \le \ C_{\alpha,\beta} N^{1-\alpha},
\end{split}
\Ees
Moreover, by \eqref{e:NoverAn} and a change of variable $s=\frac{r}{\tl A_n N}$,
\Bes
\begin{split}
\frac{n}{\tl A_n} \int_{\tl A_n N}^\infty \PP(|\xi_1|>r) \dif r &\ =
\frac{N^{1-\alpha}}{K_0 (\log \tl A_n)^\beta} \int_{1}^\infty \frac{(\log s+\log(\tl A_nN))^{\beta}}{s^\alpha} \dif s \\
& \ = \ \frac{2 d_{\alpha}N^{1-\alpha}}{\alpha K_0} \left(\frac{\log (\tl A_n N)}{\log A_n}\right)^{\beta}\int_{1}^\infty \frac{\left(1+\frac{\log s}{\log(\tl A_n N)}\right)^{\beta}}{s^\alpha} \dif s \ \le\ C_{\alpha,\beta} N^{1-\alpha}
\end{split}
\Ees
where the inequality is by $A_n \ge C_{\alpha,\beta} n^{\frac1 \alpha}$ and an easy observation that the above integral is bounded. Hence,
\ \ \ \
\ \ \ \
\Bes
\begin{split}
\sum_{i=1}^{n} \E\big(|\zeta_{n,i}|  1_{\{|\zeta_{n,i}|>N\}}\big)\ \le \ C_{\alpha, \beta} N^{1-\alpha}.
\end{split}
\Ees
Collecting all the above estimates, we immediately obtain
\Be
\mcl R_{N,n} \ \le \ C_{\alpha, \beta} \left(A_{n}^{-2+\alpha}+\frac{1}{N^{\alpha-1}}\right) \ \le \ C_{\alpha, \beta} (\log n)^{-1+\frac 1\alpha}.
\Ee
Now let us estimate the integral term in the theorem, observe
\ \ \
\Be
\begin{split}
& \ \ \ \ \  \ \ \ \sum_{i=1}^n\int_{-N}^N \left|\frac{\mcl K_\alpha(t,N)}n -\frac{ K_i(t,N)}{\alpha   }\right| \dif t \\
&\ = \ \int_{-N}^N \left|\mcl K_{\alpha}(t,N) - \frac{n}{\alpha}  K_1(t,N)\right| \dif t\ = \ \left(\int_{|t| \le \frac{x_{0}}{\tl A_n}} +\int_{\frac{x_{0}}{\tl A_n}<|t|<N}\right) \left|\mcl K_{\alpha}(t,N) - \frac{n}{\alpha}  K_1(t,N)\right| \dif t
\end{split}
\Ee
It is easy to see that
\ \ \
\Be
\begin{split}
\int_{|t| \le \frac{x_{0}}{\tl A_n}} \left|\mcl K_{\alpha}(t,N) - \frac{n}{\alpha}  K_1(t,N)\right| \dif t \ \le \ C_{\alpha} \left(\int_{|t| \le \frac{x_0}{\tl A_n}} |t|^{1-\alpha} \dif t+\int_{|t| \le \frac{x_0}{\tl A_n}} n \tl A_n^{-1} \E|\xi_{1}|\dif t\right) \ \le \ C_{\alpha,\beta} \frac{(\log n)^{\beta}}{n^{\frac 2\alpha-1}}.
\end{split}
\Ee
We shall show below that
\ \ \ \
\Be \label{e:IntEst-1}
\begin{split}
\int_{\frac{x_0}{\tl A_n}<|t|<N} \left|\mcl K_{\alpha}(t,N) - \frac{n}{\alpha}  K_1(t,N)\right| \dif t \ \le \ C_{\alpha,\beta} \frac{N}{\log A_n}. 
\end{split}
\Ee
By $N=(\log n)^{\frac{1}{\alpha}}$ and $A_n \ge C_{\alpha,\beta} n^{\frac1 \alpha}$, we have
$$\int_{-N}^{N} \left|\mcl K_{\alpha}(t,N) - \frac{n}{\alpha}  K_1(t,N)\right| \dif t \ \le \ C_{\alpha,\beta} (\log n)^{-1+\frac 1\alpha}.$$
Combining this with that of $\mcl R_{N,n}$, we immediately obtain the estimate \eqref{e:DWSn}, as desired.

It remains to prove \eqref{e:IntEst-1}. For $t>\frac{x_0}{\tl A_n}$, we have
\ \ \ \
\Be \label{e:nKN1}
\begin{split}
n  K_1(t,N)&\ =\ n\E\left[\frac{1}{\tl A_n} \xi_{1} 1_{\{\tl A_n t \le \xi_{1} \le \tl A_nN\}}\right]\\
& \ = \ \frac{n}{\tl A_n} \left[\E\left(\xi_{1} 1_{\{\xi_{1}>\tl A_nt\}}\right)-\E\left(\xi_{1} 1_{\{\xi_{1}>\tl A_nN\}}\right)\right] \\
& \ = \ nt \PP\left(\xi_{1} > \tl A_nt\right)-nN\PP\left(\xi_{1} > \tl A_nN\right)+\frac{n}{\tl A_n} \int_{\tl A_nt}^{\tl A_nN} \PP\left(\xi_{1}>r\right) \dif r,
\end{split}
\Ee
where the last equality is by Lemma \ref{l:MomEst}. For the first term in the last line above, by \eqref{e:Log}, \eqref{e:NoverAn} and a straightforward computation, we get
\ \ \
\Bes
\begin{split}
& \  \ \ \ \ \ \ \ \ nt \PP\left(\xi_{1} > \tl A_nt\right)-nN\PP\left(\xi_{1} > \tl A_nN\right) \\
& \ = \  \frac{d_{\alpha}}{\alpha} \left[t^{1-\alpha} \left(\frac{\log (\tl A_nt)}{\log A_n}\right)^{\beta}
-N^{1-\alpha} \left(\frac{\log (\tl A_n N)}{\log A_n}\right)^{\beta}\right]  \\
& \ =\ (\alpha-1) \mcl K_{\alpha}(t,N)+O\left(\frac{\log N}{\log A_n}\right) t^{1-\alpha},
\end{split}
\Ees
where $\mcl K_{\alpha}(t,N)=\frac{d_{\alpha}}{\alpha(\alpha-1)} \left(t^{1-\alpha}-N^{1-\alpha}\right)$.
For the integral term in \eqref{e:nKN1}, as $t>\frac{x_0}{\tl A_n}$, by \eqref{e:NoverAn}, we have
\Bes
\begin{split}
\frac{n}{\tl A_n} \int_{\tl A_nt}^{\tl A_nN} \PP\left(\xi_{1}>r\right) \dif r& \ = \ \frac{nK_0}{2\tl A_n} \int_{\tl A_nt}^{\tl A_nN} \frac{(\log r)^{\beta}}{r^{\alpha}} \dif r \\
&\ = \ \frac{n K_0}{2\tl A_n} \int_{\tl A_nt}^{\tl A_nN} \frac{(\log \tl A_n)^{\beta}}{r^{\alpha}} \dif r + \frac{n K_0}{2\tl A_n} \int_{\tl A_nt}^{\tl A_nN} \frac{(\log r)^{\beta}-(\log \tl A_n)^{\beta}}{r^{\alpha}} \dif r \\
&\ = \ \mcl K_{\alpha}(t,N)+O\left(\frac{\log N}{\log A_n}\right) t^{1-\alpha}  + \frac{n K_0}{2\tl A_n} \int_{\tl A_nt}^{\tl A_nN} \frac{(\log r)^{\beta}-(\log \tl A_n)^{\beta}}{r^{\alpha}} \dif r.
\end{split}
\Ees
From the previous estimate, it is easy to check
\Be
\mcl K_{\alpha}(t,N)-\frac{n K_{N,1}(t,N)}{\alpha}\ =\ \frac{K_0 n}{2 \alpha \tl A_n} \int_{\tl A_nt}^{\tl A_nN} \frac{(\log r)^{\beta}-(\log \tl A_n)^{\beta}}{r^{\alpha}} \dif r+O\left(\frac{\log N}{\log A_n}\right) t^{1-\alpha}.
\Ee
When $t>\frac{x_0}{A_n}$, we first observe
\ \ \ \
\Be
\begin{split}
\left|(\log r)^{\beta}-(\log \tl A_n)^{\beta}\right| & \ \le \ \left|\left(\log \tl A_n+\log \frac{r}{\tl A_n}\right)^\beta-(\log \tl A_n)^\beta\right| \\
& \ \le \ (\log \tl A_n)^\beta \left|\left(1+\frac{\log \frac{r}{\tl A_n}}{\log \tl A_n}\right)^\beta-1\right| \\
& \ \le \ C_{\alpha,\beta}   (\log A_n)^{\beta-1} \left|\log \frac{r}{\tl A_n}\right|,
\end{split}
\Ee
where the last inequality is by Taylor's expansion and the easy fact $\left|\frac{\log \frac{r}{\tl A_n}}{\log \tl A_n}\right|<1$ when $1 \le r \le N \tl A_n$.
The previous two relations, \eqref{e:NoverAn} and a change of variable $s=r/\tl A_n$ yield
\Be
\begin{split}
\left|\mcl K_\alpha(t,N)-\frac{n}{\alpha}  K_1(t,N)\right| &\ \le\  C_{\alpha,\beta} \left[\frac{n (\log A_n)^{\beta-1} K_0}{2 A^{\alpha}_n} \int_{t}^{N} \frac{\left|\log s\right|}{s^{\alpha}} \dif s+\frac{\log N}{\log A_n} t^{1-\alpha}\right] \\
&\ =\  C_{\alpha,\beta} \left[(\log A_n)^{-1} \int_{t}^{N} \frac{\left|\log s\right|}{s^{\alpha}} \dif s+\frac{\log N}{\log A_n} t^{1-\alpha}\right] \\
\end{split}
\Ee
It is easy to check when $t \ge 1$,
\ \ \
\Be
\begin{split}
\int_{t}^{N} \frac{\left|\log s\right|}{s^{\alpha}} \dif s \le \frac{C_\alpha}{(\alpha-1)^2},
\end{split}
\Ee
when $0< t \le 1$, we have
\ \ \
\Be
\begin{split}
\int_{t}^{N} \frac{\left|\log s\right|}{s^{\alpha}} \dif s\  = \ \int_{t}^{1} \frac{\left|\log s\right|}{s^{\alpha}} \dif s+\int_{1}^{N} \frac{\left|\log s\right|}{s^{\alpha}} \dif s \ \le \ \frac{ |\log t| t^{1-\alpha}}{\alpha-1}+\frac{2t^{1-\alpha}}{(\alpha-1)^2}.
\end{split}
\Ee
Collecting the above estimates, we get
\Be
\left|\mcl K_\alpha(t,N)-\frac{n}{\alpha}  K_1(t,N)\right| \ \le  \ C_{\alpha, \beta}\begin{cases}  (\log A_n)^{-1}\left(1+t^{1-\alpha}\log N\right), \ \ \ &1 \le t \le N; \\
(\log A_n)^{-1} t^{1-\alpha} \left(1+|\log t|+\log N\right), \ \ \ &\frac{x_0}{\tl A_n} \le t<1.
 \end{cases}
\Ee
Hence,
$$\int_{x_{0}/\tl A_n}^N \left|\mcl K_\alpha(t,N)-\frac{n}{\alpha}  K_1(t,N)\right| \dif t \ \le \ C_{\alpha,\beta} \left(\frac{N}{\log A_n}+\frac{N^{2-\alpha} \log N}{\log A_n}+\frac{1}{\log A_{n}}\right) \ \le\ C_{\alpha,\beta} \frac{N}{\log A_n}.$$
By the same argument, we get
$$\int_{-N}^{-x_{0}/\tl A_n} \left|\mcl K_\alpha(t,N)-\frac{n}{\alpha}  K_1(t,N)\right| \dif t \ \le \ C_{\alpha,\beta} \frac{N}{\log A_n}. $$
Hence, \eqref{e:IntEst-1} is proved.

\subsection{Proof of Lemma \ref{l:HKEst}}
For notational simplicity, we write $p(x)=p(1,x)$. Due to the symmetry property $p(x)=p(-x)$ for all $x \in \R$, it suffices to consider $p(x)$ for $x \ge 0$. We shall frequently use the easy relations
$$\Gamma(z+1)\ = z \Gamma(z) \ \ \ \forall \ z \in \R; \ \ \ \ \ \Gamma(z) \le 1 \ \ \ \forall \ z \in (1,2).$$
For $\theta \in (-1,\infty)$, we denote
$$I_{\theta}(x)\ = \ \int_{0}^{\infty} \lambda^{\theta} e^{-\lambda^{\alpha}} \cos(\lambda x) \dif \lambda, \ \ \ \ \ \ \ \ J_{\theta}(x)\ = \ \int_{0}^{\infty} \lambda^{\theta} e^{-\lambda^{\alpha}} \sin(\lambda x) \dif \lambda.$$
It is easy to verify by the easy estimate $|\cos(\lambda x)|, |\sin(\lambda x)| \le 1$ and a change of variable $t=\lambda^{\alpha}$ that
$$\left|I_{\theta}(x)\right| \ \le \ \frac{\Gamma(\frac{\theta+1}{\alpha})}{\alpha}, \ \ \ \left|J_{\theta}(x)\right| \ \le \ \frac{\Gamma(\frac{\theta+1}{\alpha})}{\alpha}.$$
By the inverse of Fourier transform, we have
\Be
\begin{split}
p(x)&\ = \ \frac{1}{2 \pi} \int_{-\infty}^{\infty} e^{-|\lambda|^\alpha} e^{-i x\lambda} \dif \lambda  \ = \ \frac{1}{2 \pi} \int_{-\infty}^{\infty} e^{-|\lambda|^\alpha} \cos (\lambda x)  \dif \lambda   \ = \ \frac{I_{0}(x)}{\pi}.
\end{split}
\Ee
Hence,
\Be
\begin{split}
p(x)  \ \le \ \frac{\Gamma(\frac1\alpha)}{\pi \alpha}\ = \ \frac{\Gamma(1+\frac 1\alpha)}{\pi} \ \le \ \frac{1}{\pi}.
\end{split}
\Ee
For $x>0$, using integration by parts two times, we get
\Be  \label{e:ExaHK-1}
\begin{split}
I_{0}(x)  \ = \ \frac{\alpha(\alpha-1)I_{\alpha-2}(x)-\alpha^{2}I_{2\alpha-2}(x)}{x^{2}}.
\end{split}
\Ee
Moreover,
\ \
\Be
\begin{split}
\left|I_{\alpha-2}(x)\right|  & \ \le \ \frac{\Gamma(1-\frac 1\alpha)}{\alpha} \ = \ \frac{\Gamma(2-\frac 1\alpha)}{\alpha-1} \ \le \frac{1}{\alpha-1},
\end{split}
\Ee
\Be  
\begin{split}
\left|I_{2\alpha-2}(x)\right| & \ \le \ \frac{\Gamma(2-\frac 1\alpha)}{\alpha} \ \le \ \frac{1}{\alpha}.
\end{split}
\Ee
Hence,
\Be
p(x) \ = \ \frac{I_{0}(x)}{\pi} \ \le \ \frac{2 \alpha}{\pi x^{2}}.
\Ee
Now we estimate $p'(x)$. It is obvious that
\ \ \
\Bes
I_{0}'(x) \ = \ -J_{1}(x)
\Ees
and
thus
\Bes
|I_{0}'(x)|  \ \le \ \frac{\Gamma(\frac 2\alpha)}{\alpha}\ \le \ \frac{1}{\alpha}.
\Ees
Hence,
$$|p'(x)| \ = \ \frac{|I_{0}'(x)|}{\pi} \ \le \frac{1}{\pi \alpha}.$$
For $x>0$, using integration by parts, we have
\Be \label{e:EstHK-0}
I_{0}(x) \ = \ \frac{\alpha J_{\alpha-1}(x)}{x},
\Ee
which implies
\Be
I_{0}'(x) \ = \ -\frac{\alpha J_{\alpha-1}(x)}{x^{2}}+\frac{\alpha I_{\alpha}(x)}{x}.
\Ee
It is easy to check
\ \ \
\Be
\left|\frac{\alpha J_{\alpha-1}(x)}{x^{2}}\right| \ \le \ \frac{1}{x^{2}}.
\Ee
Using integration by parts we have
\ \ \
\Be
I_{\alpha}(x)\ = \ \frac{\alpha J_{2\alpha-1}(x)-\alpha J_{\alpha-1}(x)}{x},
\Ee
which gives
\ \ \
\Be
\left|\frac{\alpha I_{\alpha}(x)}{x}\right|\ \le \ \frac{\alpha^{2}}{x^{2}}  \left(\frac{\Gamma(2)}{\alpha}+\frac{\Gamma(1)}{\alpha}\right)\ = \ \frac{2\alpha}{x^{2}}.
\Ee
Hence,
$$|p'(x)| \ \le \ \frac{|I_{0}'(x)|}{\pi} \ \le \ \frac{(2\alpha+1)}{\pi x^{2}}.$$
For $p''(x)$, we have
\Be
p''(x) \ = \ -\frac{I_{2}(x)}{\pi},
\Ee
which immediately implies
$$|p''(x)| \ \le \ \frac{\Gamma(\frac 3 \alpha)}{\alpha\pi} \ \le \ \frac{2}{\alpha \pi}.$$
Using integration by parts two times,
\Be
I_{2}(x)\ = \ -\frac 2{x^{2}} I_{0}(x)+\frac{\alpha^{2}+3\alpha}{x^{2}} I_{\alpha}(x)-\frac{\alpha^{2}}{x^{2}} I_{2\alpha}(x).
\Ee
By a similar computation, we get the second estimate of $p''(x)$, as desired.
\bibliographystyle{amsplain}

\end{document}